\newcommand{\PP}{{\mathfrak P}}
\newcommand{\XX}{{\mathfrak X}}
\newcommand{\Pp}{{\mathfrak P}}
\newcommand{\bb}{{\mathfrak b}}
\newcommand{\aaa}{{\mathfrak a}}
\newcommand{\Qq}{{\mathfrak Q}}
\newcommand{\UU}{{\mathfrak U}}
\newcommand{\Uu}{{\mathfrak U}}
\newcommand{\Aa}{{\mathfrak A}}
\newcommand{\Bb}{{\mathfrak B}}
\newcommand{\Cc}{{\mathfrak C}}
\newcommand{\Dd}{{\mathfrak D}}
\newcommand{\calA}{{\mathcal A}}
\newcommand{\calL}{{\mathcal L}}
\newcommand{\calV}{{\mathcal V}}
\newcommand{\Z}{{\mathbb Z}}
\newcommand{\F}{{\mathbb F}}
\newcommand{\N}{{\mathbb N}}
\newcommand{\ord}{\mbox{ord}}
\newcommand{\pp}{{\mathfrak p}}
\newcommand{\qq}{{\mathfrak q}}
\newcommand{\ttt}{{\mathfrak t}}
\newcommand{\nn}{{\mathfrak n}}
\newcommand{\dd}{{\mathfrak d}}
    \newcommand{\BB}{{\mathfrak B}}
    \newcommand{\DD}{{\mathfrak D}}
\newcommand{\rr}{{\mathfrak r}}
       \newcommand{\CC}{{\mathfrak C}}
         \newcommand{\YY}{{\mathfrak Y}}
\newcommand{\cc}{{\mathfrak c}}
\newcommand{\calE}{{\mathcal E}}
\font\te=eufm10
\newcommand{\mte}[1]{\mbox{\te {#1}}}
\newtheorem{theorem}{Theorem}[section]
\newtheorem{lemma}[theorem]{Lemma}
\newtheorem{corollary}[theorem]{Corollary}
\newtheorem{proposition}[theorem]{Proposition}
\theoremstyle{definition}
\newtheorem{definition}[theorem]{Definition}
\theoremstyle{remark}
\newtheorem{remark}[theorem]{Remark}
\newtheorem{notation}[theorem]{Notation}
\newtheorem{notationassumption}[theorem]{Notation and Assumptions}
\newcommand{\comment}[1]{}
\begin{document}%
\bibliographystyle{alpha}%
\title[Hilbert's Tenth Problem over Function Fields of Positive
Characteristic]{Hilbert's Tenth Problem over Function Fields of
  Positive Characteristic Not Containing the Algebraic Closure of a
  Finite Field}%
\author{Kirsten Eisentr\"ager} \address{Department of Mathematics\\
  The Pennsylvania State University\\ University Park, PA 16802, USA.}
\author{Alexandra Shlapentokh} \address{Department of Mathematics\\
  East Carolina University\\ Greenville, NC 27858, USA.}
\begin{abstract} We prove that the existential theory of any function
  field $K$ of characteristic $p> 0$ is undecidable in the language of
  rings provided that the constant field does not contain the algebraic
  closure of a finite field.  
 We also extend the undecidability proof for function fields of higher 
  transcendence degree to characteristic 2 and show that the first-order theory of {\bf any} function
  field of positive characteristic is undecidable in the language of
  rings without parameters.\end{abstract}

\keywords{Undecidability,  Hilbert's Tenth Problem}

\thanks{K.\ Eisentr\"ager was partially supported by
  National Science Foundation grant DMS-1056703.  A.~Shlapentokh
  was partially supported by National Science Foundation grant DMS-1161456.}
\maketitle

\section{Introduction}\label{intro}
Hilbert's Tenth Problem in its original form was to find an algorithm
to decide, given a polynomial equation $f(x_1,\dots,x_n)=~0$ with
coefficients in the ring $\Z$ of integers, whether it has a solution
with $x_1,\dots,x_n \in \Z$. Matiyasevich \cite{Mat70}, building on
earlier work by Davis, Putnam, and Robinson \cite{DPR61}, proved
that no such algorithm exists, i.e.\ Hilbert's Tenth Problem is
undecidable.

Since then, analogues of this problem have been studied by asking the
same question for polynomial equations with coefficients and solutions
in other recursive commutative rings. A recursive ring is a
countable ring for which there is an algorithm to determine what the
elements of the ring are and such that the graphs of addition and multiplication
are also recursive.  Perhaps the most important unsolved question in
this area is Hilbert's Tenth Problem over the field of rational
numbers which, at the moment, seems out of reach.

The function field analogue of Hilbert's Tenth Problem in positive
characteristic turned out to be much more tractable. Hilbert's Tenth
Problem is known to be undecidable for the function field $K$ of a
curve over a finite field~\cite{Ph3,V,Sh13,Eis}. We also have
undecidability of Hilbert's Tenth Problem for certain function fields
over possibly infinite constant fields of positive
characteristic~\cite{Sh15, Sh30, Eis,K-R1}. The results of
\cite{Eis} and \cite{Sh15} also generalize to higher transcendence
degree (see \cite{Sh24} and \cite{Sh30}) and give undecidability of
Hilbert's Tenth Problem for finite and some infinite extensions of
$\F_q(t_1,\dots,t_n)$ with $n\geq 2$. In \cite{Eis2012} the
problem was shown to be undecidable for finite extensions of $k(t_1,
\dots, t_n)$ with $n \geq 2$ and $k$ algebraically closed of odd
characteristic.

So all known undecidability results for Hilbert's Tenth Problem in
positive characteristic either require that the constant field not be
algebraically closed or that we are dealing with a function field in
at least 2 variables. The big open question that remains is whether
Hilbert's Tenth Problem for a one-variable function field over an
algebraically closed field of constants is undecidable.  In this paper
we will shrink the window of the ``unknown'' almost precisely to the
question above by proving the following theorems (we separate the
countable and uncountable cases).
\begin{theorem}
\label{countable}
If $K$ is any countable function field not containing the algebraic
closure of a finite field, then Hilbert's Tenth Problem is not
solvable over $K$.
\end{theorem}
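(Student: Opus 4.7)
The strategy is to reduce the statement to known undecidability results via a diophantine--definition argument, with the one-variable case carrying the new content. Treat the transcendence degree of $K$ over its prime field as the primary parameter.

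First, if the transcendence degree of $K$ over $\F_p$ is at least two, the conclusion follows from the results cited in Section~\ref{intro} --- Shlapentokh \cite{Sh24,Sh30} and Eisentr\"ager \cite{Eis2012} --- together with the characteristic-two extension of the higher-transcendence-degree proof announced in the abstract of the present paper. I would therefore assume henceforth that $K/C$ has transcendence degree one, where $C := \overline{\F_p} \cap K$ is the constant field; equivalently, $K$ is a finite extension of $C(t)$ for some transcendental $t \in K$.

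The next step exploits the hypothesis $C \subsetneq \overline{\F_p}$ to extract a finite-field witness. Since subfields of $\overline{\F_p}$ are parameterised by supernatural numbers, there must exist a rational prime $\ell$ and an integer $k \ge 0$ with $\F_{p^{\ell^k}} \subseteq C$ but $\F_{p^{\ell^{k+1}}} \not\subseteq C$. Setting $q := p^{\ell^k}$, I would choose $\alpha \in \overline{\F_p}$ of degree $\ell$ over $\F_q$. Then $\alpha \notin C$, the minimal polynomial of $\alpha$ over $\F_q$ remains irreducible over $K$, and $K(\alpha)/K$ is a cyclic constant extension of degree $\ell$. The element $\alpha$ will play, in this more general setting, the role of the ``non-constant constant'' used in \cite{Eis,Sh15}. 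I would then use $\alpha$ and $t$ together to diophantine-define, inside $K$, a subring $R$ over which Hilbert's Tenth Problem is already known to be undecidable --- concretely, a holomorphy subring of $K$ that is integral over $\F_q[t]$. Following the template of \cite{Eis} and \cite{Sh15}, the integrality conditions would be encoded by norm conditions attached to the extension $K(\alpha)/K$ (to cut down to primes of prescribed residue behaviour) combined with Frobenius-twisted divisibility tests involving $t$ (to control $t$-adic valuations). Once such an $R$ is diophantine in $K$, the undecidability of HTP over $\F_q(t)$ and its integral closures, established in \cite{Ph3,V,Sh13,Eis}, transfers to $R$ and hence to $K$.

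\textbf{The main obstacle.} The substantial difficulty is producing the diophantine definition of $R$ uniformly in $K$, using only the abstract hypothesis $C \ne \overline{\F_p}$ without any additional structure on $C$. Existing constructions exploit either a finitely generated constant field or a specific distinguished non-constant constant, neither of which is available here: $C$ may be any countable proper subfield of $\overline{\F_p}$, and $K/C(t)$ any finite extension. The definition must therefore depend only on $\ell$, the witness $\alpha$, and the transcendental $t$, and be robust under arbitrary such $K$. Engineering such a uniform norm-plus-Frobenius diophantine description of $R$ is where I expect the bulk of the technical work to lie.
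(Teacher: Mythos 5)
Your proposal has a genuine gap, and it also misidentifies where the new content of the theorem lies. The decomposition by transcendence degree over $\F_p$ does not match the paper, which treats $K$ uniformly as a one-variable function field over its constant field $C$ (itself possibly transcendental over $\F_p$) and never splits into cases. More importantly, your dismissal of the transcendence-degree-$\geq 2$ case is unsound: \cite{Sh24} is stated for characteristic greater than $2$, \cite{Sh30} concerns infinite transcendence degree under extra hypotheses, \cite{Eis2012} requires algebraically closed constants (incompatible with $\overline{\F_p}\not\subset K$), and citing ``the characteristic-two extension announced in the abstract of the present paper'' is circular --- that is part of what Theorem~\ref{countable} asserts.

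In the remaining case you propose to follow ``the template of \cite{Eis} and \cite{Sh15}'' to build the Frobenius/integrality definitions, but the central point of this paper is precisely that that template fails when the algebraic closure of $\F_p$ in $K$ has no extension of degree $p$, and a genuinely new argument is required. The new argument is the existential definability of $P(K)=\{(x,x^{p^s})\}$ over \emph{any} function field of positive characteristic --- even over an algebraically closed field of constants --- via the bounded-height mechanism of Lemma~\ref{le:boundedheight}; nothing in your outline supplies this. Your observation that there is a prime $\ell$ and an element $\alpha$ of degree $\ell$ over a subfield of $C$ is the right witness for the (comparatively easy) $\mbox{INT}(K,\pp,t)$ half of the argument, where indeed the prime need not be $p$, but it leaves the $P(K)$ half untouched. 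Finally, your overall reduction --- carve out a holomorphy subring $R$ Diophantine in $K$ and invoke HTP-undecidability over $R$ --- differs from the paper's, which defines $P(K)$ and $\mbox{INT}(K,\pp,t)$ and encodes Pheidas's undecidable structure $(\Z_{>0},+,\mid_p)$ into $K$ as in Proposition~\ref{noalg}; as stated, your route neither identifies the $R$ concretely nor establishes that HTP over that $R$ (which need not be finitely generated) is known to be undecidable.
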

\begin{theorem}
\label{uncountable}
If $K$ is any function field of positive characteristic not containing
the algebraic closure of a finite field, then there exists a finitely
generated subfield $K_0 \subseteq K$ such that there is no algorithm
to determine whether a polynomial equation with coefficients in $K_0$
has solutions in $K$.
\end{theorem}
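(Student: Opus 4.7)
The plan is to reduce Theorem~\ref{uncountable} to the proof (not just the statement) of Theorem~\ref{countable} by a parameter-extraction argument. The key observation is that the diophantine interpretation of an undecidable set underlying Theorem~\ref{countable} should use only finitely many coefficients from $K$; the countability hypothesis in Theorem~\ref{countable} enters only to make ``existential theory of $K$'' a sensible recursive object. When $K$ is uncountable, the natural formulation is precisely the two-sorted statement of Theorem~\ref{uncountable}: coefficients from a finitely generated subfield $K_0$, solutions in $K$.

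First I would carefully inspect the proof of Theorem~\ref{countable} and collect every element of $K$ that appears as a parameter: a transcendence element $t$ over the constant field $C$ of $K$; elements used to diophantinely define integrality at a finite set of places, to express $p$-th-power predicates, or to encode arithmetic in some auxiliary ring; and, crucially, elements that witness $C\subsetneq\overline{\F_p}$. Since $K\cap\overline{\F_p}=C\subsetneq\overline{\F_p}$, one can fix an irreducible polynomial $h\in\F_p[x]$ whose roots lie in $\overline{\F_p}\setminus C$, and the absence of a root of $h$ in $K$ is exactly the diophantine hypothesis that distinguishes this setting. Let $S\subset K$ be the finite set of all such parameters and set $K_0=\F_p(S)$, which is a finitely generated subfield of $K$.

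Next I would verify that the diophantine interpretation produced by the countable-case proof, evaluated in $K$, uses only coefficients from $K_0$. Consequently, any algorithm deciding whether polynomial equations over $K_0$ have solutions in $K$ would decide the underlying undecidable problem, establishing Theorem~\ref{uncountable}. Observe also that $K_0$ itself is a function field not containing $\overline{\F_p}$, since it is finitely generated over $\F_p$, has positive transcendence degree (it contains the transcendental element $t$), and $K_0\cap\overline{\F_p}\subseteq K\cap\overline{\F_p}=C\subsetneq\overline{\F_p}$; in particular $K_0$ satisfies the hypothesis of Theorem~\ref{countable} in its own right, which is a useful sanity check.

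The main obstacle is confirming that the diophantine definitions in the proof of Theorem~\ref{countable} really are uniform in finitely many parameters from $K$. For a one-variable function field this is plausible because the Shlapentokh-style diophantine definitions of valuation rings at finitely many places require only a single transcendence element, the coefficients of a minimal polynomial of a suitable generator, and a bounded number of auxiliary witnesses for non-$p$-th-powers. For higher transcendence degree, one must additionally check that the reduction to a parameter-finite model of an undecidable structure (e.g., arithmetic over $\Z$ or over $\F_p[t]$) uses only finitely many coefficients from $K$. This parameter-finiteness audit is where essentially all the work in upgrading the countable case to the uncountable setting lies; once it is done, the theorem follows immediately.
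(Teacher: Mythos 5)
Your approach is correct in substance and matches the paper's: the key point in both cases is that the existential definitions of $P(K)$ and $\mathrm{INT}(K,\pp,t)$ involve only finitely many parameters from $K$, so the reduction from Pheidas' undecidable $(\Z_{>0},+,\mid_p)$ problem can be carried out with coefficients in a finitely generated subfield. However, the framing of your proposal is slightly off: the paper does not prove the countable case first and then perform a posteriori ``parameter extraction.'' Instead, Proposition~\ref{noalg} is stated from the outset for arbitrary $K$ and already has the finitely-generated-coefficient-field conclusion baked in; Theorems~\ref{countable} and~\ref{uncountable} are then proved simultaneously by establishing existential definability of $P(K)$ and $\mathrm{INT}(K,\pp,t)$ and invoking Proposition~\ref{noalg}. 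Thus the ``parameter-finiteness audit'' you describe is not an extra step requiring ``essentially all the work''---it is automatic, because any Diophantine definition is given by a fixed finite polynomial system and hence uses only finitely many coefficients. Your sanity check that $K_0$ is itself a function field missing $\overline{\F_p}$ is welcome but not needed, and contains a small slip: $K\cap\overline{\F_p}$ is the algebraic closure of $\F_p$ in the constant field $C$, which is contained in $C$ but generally not equal to it (e.g.\ if $C$ has positive transcendence degree over $\F_p$). This does not affect the argument.
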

In \cite{ES}, the authors proved that the first-order theory of any
function field not equal to a function field of transcendence degree
at least 2 and characteristic 2 in the language of rings without
parameters is undecidable.  In this paper we prove the result in the
missing case to show that the following theorem is true.
\begin{theorem}
\label{first-order}
The first-order theory of any function field of positive characteristic in the language of rings without parameters is undecidable.
\end{theorem}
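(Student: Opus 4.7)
By \cite{ES}, the first-order theory of any function field of positive characteristic in the language of rings without parameters is already known to be undecidable, with the single exception of function fields of transcendence degree at least $2$ and characteristic $2$. The plan is therefore to dispose of this missing case. Let $K$ be a function field with $\mathrm{char}(K) = 2$ and transcendence degree $\geq 2$ over $\F_2$, and split according to whether $K$ contains $\overline{\F_2}$.

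If $K$ does not contain $\overline{\F_2}$, then Theorem \ref{uncountable} supplies a finitely generated subfield $K_0 \subseteq K$ over which Hilbert's Tenth Problem for $K$ is undecidable. I would lift this to parameter-free first-order undecidability using the interpretability machinery of \cite{ES}: first-order define (without parameters) the constant subfield of $K$ as the relative algebraic closure of the prime field, and then use the function-field structure to definably pick out enough transcendental elements to encode a recursive presentation of $K_0$. A decision procedure for the first-order theory of $K$ would then yield one for H10 over $K_0$, contradicting Theorem \ref{uncountable}.

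If $K$ does contain $\overline{\F_2}$, I would instead use the characteristic-$2$ extension of the argument from \cite{Eis2012} announced in the abstract: choose algebraically independent $t_1, t_2 \in K$, work with an ordinary elliptic curve $E$ over $\overline{\F_2}(t_1, t_2)$ given by a characteristic-$2$ Weierstrass equation, and use $E$ together with Artin--Schreier extensions to diophantine-define a discrete valuation ring in $K$. This yields H10 undecidability, which upgrades to parameter-free first-order undecidability exactly as in the previous subcase.

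The main obstacle is the characteristic-$2$ adaptation of the \cite{Eis2012} construction. The odd-characteristic proof relies crucially on the Kummer cover $y^2 = f(x)$, on the structure of $2$-torsion of elliptic curves, and on trace/norm computations in quadratic extensions, all of which behave quite differently when $p = 2$. Replacing these by Artin--Schreier covers $y^2 + y = f(x)$ and re-verifying that the resulting diophantine definitions still isolate the intended valuation rings, in particular controlling the splitting behavior of places under such extensions, is where I would expect the technical heart of the proof to lie.
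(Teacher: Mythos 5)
Your proposal diverges substantially from the paper's argument and, in the case you acknowledge as the ``technical heart,'' it sketches a construction that the paper does not do and explicitly leaves open. The paper's proof of Theorem~\ref{first-order} is short and uniform: it uses only the existential definability of $P(K,t)=\{x\in K : \exists s, x=t^{p^s}\}$, which Section~\ref{sec:special} establishes for \emph{every} function field of positive characteristic --- including those containing $\overline{\F_p}$ and including characteristic $2$ --- and then invokes Theorem 2.9 of \cite{ES} to get a model of $(\Z, \mid, +)$ and Section 5 of \cite{ES} to strip parameters. No case split on whether $K$ contains $\overline{\F_p}$ is needed, because the first-order machinery of \cite{ES} never requires $\mbox{INT}(K,\pp)$ or H10 undecidability; it needs only $P(K,t)$.

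Your second case contains the genuine gap. You propose to handle $K\supseteq\overline{\F_2}$ by adapting the elliptic-curve/Kummer construction of \cite{Eis2012} to Artin--Schreier covers in characteristic $2$, establishing H10 undecidability, and then lifting. But the paper does not carry out any such adaptation, and it states explicitly in the introduction that Diophantine undecidability over an algebraically closed constant field (in one variable) remains open. You yourself flag the char-$2$ adaptation of \cite{Eis2012} as an unresolved obstacle --- so your proposed route relies on a nontrivial result you have not proved and which is not in the paper. This matters because the detour through H10 is unnecessary: once you observe that Section~\ref{sec:special} defines $P(K,t)$ with \emph{no} hypothesis on the constant field (the ``adjusting for arbitrary constant fields'' subsection handles passage from the algebraically closed case back to $K$), the first-order result follows for all $K$ at once. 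Your first case is closer in spirit, but even there you route through Theorem~\ref{uncountable} (H10) rather than directly through $P(K,t)$-definability, which is both weaker than necessary and organized differently from the paper.
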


To explain the idea of the proof we need the notion of a Diophantine
(or existentially definable) set. Given a commutative integral domain
$R$ and a positive integer $k$, we say that a subset $A \subset R^k$
is {\it Diophantine} over $R$ or is {\it existentially definable} over
$R$ in the language of rings if there exists a polynomial
$f(t_1,\ldots,t_k,x_1,\ldots,x_n)$ with coefficients in $R$ such that
for any $k$-tuple $\bar a=(a_1,\ldots,a_k) \in R^k$ we have that $\bar
a \in A \iff \exists b_1,\ldots,b_n \in R:f(\bar a,
b_1,\ldots,b_n)=0.$ In this case, $f(t_1,\ldots,t_k,x_1,\ldots,x_n)$
is called a Diophantine definition of $A$ over $R$.
In general, if the fraction field of a recursive integral
domain is not algebraically closed, a system of polynomial equations
can always be effectively replaced by a single polynomial equation
without changing the relation~\cite[Chapter 1, \S
2, Lemma 1.2.3] {Sh34}.

The current methods for proving undecidability of Hilbert's Tenth
Problem for function fields $K$ of positive characteristic $p$ usually
require showing that the following sets are existentially definable in
the language of rings (or, equivalently, have a
Diophantine definition over $K$):
\[
P(K)=\{(x,x^{p^s}):x \in K, s \in \Z_{\geq 0}\},
\]
and for some nontrivial prime $\pp$ of $K$,
\[
\mbox{INT}(K,\pp)=\{x\in K: \ord_{\pp}x \geq~0\}.
\]

In \cite{ES} we showed that we can existentially define one of these
sets for a large class of fields: we proved that the set of $p$-th
powers $P(K)$ is existentially definable in {\em any} function field $K$ of
characteristic $p>2$ whose constant field has transcendence degree at
least one over $\F_p$. In \cite{PaPhVi} a uniform definition of $p$-th
powers was given for arbitrary function fields with the characteristic
``large enough'' compared to the genus of the field.  In this paper,
we show that the set $P(K)$ is existentially definable in {\it any} function
field $K$ of positive characteristic. In particular, we are finally able
to remove the assumption in \cite{Sh15} and \cite{Eis} that the
algebraic closure of $\F_p$ in $K$ should have an extension of degree
$p$. 

The most difficult part of our argument is defining $p^s$-th powers of
a special element $t$. In \cite{Sh15,Eis} we needed to assume that we
had suitable extensions of degree $p$ of the constant field to
conclude that a certain set of equations over $K$, which was satisfied
by an element $x \in K$, actually forced $x$ to be in the rational
function field $C_K(t)$ (Lemma 2.6 in \cite{Sh15} and Lemma 3.5 in
\cite{Eis}). Here $C_K$ denotes the constant field of $K$. This
argument does not work in our setting because our constant field can
be algebraically closed. Perhaps the most important new technical part
is contained in Lemma~\ref{le:boundedheight}, which is the key new
argument in Section~\ref{sec:new} that allows us to define $p^s$-th
powers of $t$ in arbitrary function fields of positive
characteristic.

The second set that is required to be existentially definable to prove the
undecidability of Hilbert's Tenth Problem is the set
$\mbox{INT}(K,\pp)$ defined above. In \cite{Sh15} the second author
showed that $\mbox{INT}(K,\pp)$ was existentially definable for some
non-trivial prime $\pp$ of $K$ over any function field whose constant
field was algebraic over a finite field and not algebraically closed,
and some higher transcendence degree constant fields not containing
the algebraic closure of a finite field.  In fact to show the
Diophantine undecidability of a function field $K$ of positive
characteristic, it is enough to give an existential definition of
$P(K)$ and of the set which we call $\mbox{INT}(K, \pp,t)$, where $t$
is a non-constant element of $K$ with $\ord_{\pp}t >0$, such that for
any $x \in K$ we have that $x \in \mbox{INT}(K, \pp,t) \Rightarrow
\ord_{\pp}x \geq 0$ and if $x \in k_0(t)$, where $k_0$ is the
algebraic closure of a finite field in $K$, and $\ord_{\pp} x \geq 0$,
then $x \in \mbox{INT}(K, \pp,t)$.

The structure of the remainder of the paper is as follows.  In
Section~\ref{proofidea} we explain how to derive the existential
undecidability of a function field $K$ of positive characteristic from
existential definitions of $P(K)$ and $\mbox{INT}(K, \pp,t)$ for some
non-trivial prime $\pp$ of $K$ and a non-constant element $t \in K$.
In Section~\ref{sec:rewrite} we discuss some general properties of Diophantine
definitions. In Section~\ref{sec:technical} we discuss some technical properties of
function fields of positive characteristic we will need to define
$P(K)$. In Section~\ref{sec:special} we give an existential definition of $P(K)$, and
in Section~\ref{sec:integralsubsets} we give an existential definition of $\mbox{INT}(K,
\pp,t)$.  Finally, in Section~\ref{sec:firstorder} we use the existential definition of
$P(K)$ to obtain the first-order results in Theorem~\ref{first-order}.

\section{From $p$-th powers and integrality at a prime to Diophantine undecidability}\label{proofidea}
We start with defining a relation on positive integers.
\begin{definition}
  For $m, n \in \Z_{>0}$ and $p$ a rational prime number, define $n \mid_p
  m$ to mean $m = np^s$ for some $s \in \Z_{\geq 0}$.
\end{definition}
In \cite{Ph5} Thanases Pheidas proved that the existential theory of
$(\Z_{>0}, +, \mid_p)$ is undecidable by showing that multiplication of
positive integers is definable using ``+'' and ``$\mid_p$''.
That means there is no uniform algorithm that, given a system of
equations over the positive natural numbers with addition and
$\mid_p$, determines whether this system has a solution or not. When
$P(K)$ and $\mbox{INT}(K,
\pp,t)$ are existentially definable, we can
reduce this problem to Hilbert's Tenth Problem over $K$ and prove that
Hilbert's Tenth Problem over $K$ must be undecidable.

To do this we define a map $f$ from the positive integers to subsets of $K$ by
associating to an integer $n$ the subset $f(n) = \{ x \in
\mbox{INT}(K, \pp,t) : \ord_{\pp}x =n\}.$ Then the equation
$n_3=n_1+n_2$ $ (n_i \in \Z_{>0})$ is equivalent to the existence of
elements $z_i \in f(n_i)$ with $z_3=z_1\cdot z_2$.

To ensure that we are only constructing equations over $K$ with $z_i$
elements of positive order (to obtain elements in $\Z_{>0}$ under the map
$K \to \Z$ that maps $z_i$ to $\ord_{\pp}(z_i)$) we add the condition that
$\ord_{\pp}(z_i/t) \in \mbox{INT}(K, \pp,t)$.

 We also have that for
positive integers $n,m$, 

\begin{align*} n \mid_p m &\iff \exists s \in \N m = p^s n\\
&\iff \exists x \in f(n)\;  \exists y \in f(m) \;\exists s \in \N\;
(\ord_{\pp} y = p^s \ord_{\pp} x).
\end{align*}
This equivalence can be seen by letting $x=t^n$ and $y = t^m$.

But the last formula is equivalent to
\[
\exists x \in f(n) \;\exists y \in f(m)\; \exists w \in K \;\exists s \in \N\;
w= x^{p^s} \text{ and } \{ w/y, y/w\} \subset   \mbox{INT}(K, \pp,t).
\]

Saying that both $w/y$ and $y/w$ are in $\mbox{INT}(K, \pp,t)$ simply
means that they have the same order at $\pp$.

\comment{

 The longer
(and more transparent) version of this assertion can be stated as the
following proposition.
\begin{proposition}
\label{Pheidas}
There is no algorithm which if given a finite collection of linear polynomials  
\[
L_i(x_1,\ldots,x_m), M_i(x_1,\ldots,x_m), N_i(x_1,\ldots,x_m), i=1,\ldots, r
\]
  in variables $x_1, \ldots,x_m$ and coefficients in $\Z$ can determine if the following system of equations 
\begin{equation}
\label{linsys}
\left \{
\begin{array}{c}
L_i(x_1,\ldots,x_m)=0\\
M_i(x_1,\ldots,x_m) \mid_p N_i(x_1,\ldots,x_m)
\end{array}
\right .
\end{equation}
has solutions in positive integers.  
\end{proposition}
Let a function field $K$ of positive characteristic $p$ and a
non-trivial prime $\pp$ of $K$ be given, and assume $\mbox{INT}(K,
\pp,t)$ and $P(K)$ have Diophantine definitions over $K$ (or are
existentially definable over $K$).  Now if we show that given an
arbitrary linear system \eqref{linsys}, one can algorithmically
produce a system of equations over $K$ (with coefficients in a fixed
finitely generated subfield) such that this system has solutions over
$K$ if and only if the system \eqref{linsys} has solutions in positive
integers, then we prove that Diophantine definability of
$\mbox{INT}(K, \pp,t)$ and $P(K)$ implies Diophantine undecidability
of $K$.  Our translation of linear equations over $\Z$ into polynomial
equations over $K$ will proceed as follows.  A linear equation
\begin{equation}
\label{lin}
w=L(x_1,\ldots,x_m)=\sum_{j=1}^m a_jx_j=0
\end{equation}
 will be replaced by equations corresponding to the statements below:
 \begin{equation}
 \label{prod}
 W=\prod_{j=1}^m Y_j^{a_j}, Y_j \not = 0
 \end{equation}
 and
 \begin{equation}
 \label{int1}
  \frac{Y_j}{t}, W, W^{-1} \in INT(K, \pp, t).
\end{equation}
  A $p$-divisibility condition 
  \begin{equation}
  \label{pdiv}
  x|_p y
  \end{equation} will be replaced by equations corresponding to 
  \begin{equation}
  \label{P(K)}
  (X, Y) \in P(K), 
  \end{equation}
  and
  \begin{equation}
  \label{int2}
  \frac{X}{t},\frac{Y}{t} \in INT(K, \pp, t).
  \end{equation}
  Now it is enough to make sure that \eqref{lin} has solutions in
  $\Z_{> 0}$ if and only if \eqref{prod} and \eqref{int1} have
  solutions in $K$, and that \eqref{pdiv} has solutions in $\Z_{> 0}$
  if and only if \eqref{P(K)} and \eqref{int2} have solutions in $K$.
  So suppose now that \eqref{lin} holds, i.e.\ we have $x_1, \ldots,
  x_n \in \Z_{> 0}$ with $w=\sum_{j=1}^m a_jx_j=0$ and $a_i\in \Z$.
  In this case, let $Y_j = t^{x_j}$ and let
  \[
  W =\prod_{j=1}^m Y_j^{x_j}=t^{\sum_{j=1}^ma_jx_j}.
  \]
  Since, $\ord_{\pp}t >0, x_j \geq 0,$ and $\sum_{j=1}^m a_jx_j = 0$,
  we have that \eqref{int1} holds for the specified values of $Y_j$
  and $W=1$.  Now assume that \eqref{prod} and \eqref{int1} hold.  Let
  $x_j = \ord_{\pp}Y_j >0$, and observe that $\ord_{\pp}W \geq 0,
  \ord_{\pp}W^{-1} \geq 0$ implies that $\ord_{\pp}W=0$.  Thus,
  $\sum_{j=1}^m a_jx_j = 0$.
  
  Next suppose \eqref{pdiv} holds.  In this case let $X=t^x$, and $Y =t^y$.  Since $y = xp^k$, we see that \eqref{P(K)} holds.  Further, since $\ord_{\pp}t >0$, and $x, y \in \Z_{> 0 }$, we also have that \eqref{int2} holds.  Finally suppose that $(X,Y) \in P(K)$ and \eqref{int2} holds.  In this case, let $0 < \ord_{\pp}X =x$ and let $0 < \ord_{\pp}Y =y$.  Since $Y = X^{p^k}$ we also have that $ x|_p y$.  \\
  }
  
  We have now proved the following proposition.
  \begin{proposition}
  \label{noalg}
  If $K$ is a function field of positive characteristic $p$ over a
  field of constants $k$, $\pp$ is a non-trivial valuation (or prime)
  of $K$, $t \in K\setminus k$, has a positive order at $\pp$, and
  $P(K)$ and $\mbox{INT}(K, \pp, t)$ are existentially definable over
  $K$, then for some finitely generated subfield $K_0$ of $K$, there
  is no algorithm to determine whether an arbitrary polynomial
  equation in several variables and with coefficients in $K_0$ has
  solutions in $K$.
  \end{proposition}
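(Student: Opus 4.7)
The plan is to reduce Pheidas' undecidability result for the existential theory of $(\Z_{>0}, +, \mid_p)$ to Hilbert's Tenth Problem over $K$, using the two given existential definitions as the bridge between positive integers and elements of $K$. The coding map sends a positive integer $n$ to the set $f(n) = \{ x \in \mbox{INT}(K, \pp, t) : \ord_{\pp} x = n\}$, so that an integer is represented by any element of $K$ with prescribed $\pp$-order. The distinguished element $t \in K \setminus k$ with $\ord_{\pp} t > 0$ will serve as a canonical witness of a positive $\pp$-order.

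Next I translate each primitive relation of the theory $(\Z_{>0}, +, \mid_p)$ into an existential condition over $K$. A linear relation $n_3 = n_1 + n_2$ is encoded by the multiplicative relation $z_3 = z_1 z_2$ with $z_i \in f(n_i)$; additivity of $\ord_{\pp}$ under multiplication converts this back to $\ord_{\pp} z_3 = \ord_{\pp} z_1 + \ord_{\pp} z_2$. A divisibility condition $n \mid_p m$ is encoded by the existence of $x \in f(n)$, $y \in f(m)$, and $w \in K$ with $(x,w) \in P(K)$ and both $w/y,\, y/w \in \mbox{INT}(K, \pp, t)$: the last two conditions force $\ord_{\pp} w = \ord_{\pp} y$, so $\ord_{\pp} y = p^s \ord_{\pp} x$ for some $s \geq 0$, i.e.\ exactly $n \mid_p m$. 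Both encodings use only the assumed existential definitions of $P(K)$ and $\mbox{INT}(K,\pp,t)$, ordinary field operations, and the element $t$.

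The key technical subtlety is ensuring the extracted $\pp$-orders are \emph{strictly positive} rather than merely nonnegative or arbitrary integers, since Pheidas' theorem is stated over $\Z_{>0}$. This is enforced by attaching to each coded variable $z$ the auxiliary condition $z/t \in \mbox{INT}(K, \pp, t)$, which forces $\ord_{\pp}(z) \geq \ord_{\pp}(t) \geq 1$. All coefficients appearing in the resulting translation lie in the subfield $K_0 \subseteq K$ generated over the prime field by $t$ together with the finitely many coefficients occurring in the fixed existential definitions of $P(K)$ and of $\mbox{INT}(K,\pp,t)$, so $K_0$ is finitely generated.

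Putting the pieces together, from any finite instance of the existential theory of $(\Z_{>0}, +, \mid_p)$ I produce an effectively computable system of polynomial equations over $K_0$ that is solvable in $K$ if and only if the original instance is solvable in $\Z_{>0}$. Since $K$ is a function field, it is not algebraically closed, so the system can be collapsed to a single polynomial equation by the standard device cited in the excerpt. A decision algorithm for Hilbert's Tenth Problem over $K$ restricted to coefficients in $K_0$ would then decide Pheidas' problem, a contradiction. The main obstacle is not the individual encodings, which are essentially forced, but the careful bookkeeping to keep every coefficient inside a single finitely generated subfield while preserving the positivity of the coded integers throughout.
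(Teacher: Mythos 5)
Your proposal is correct and follows exactly the paper's line of argument: reduce from Pheidas' undecidability of the existential theory of $(\Z_{>0}, +, \mid_p)$ by coding a positive integer $n$ via the $\pp$-order of elements of $\mbox{INT}(K,\pp,t)$, translating $+$ into multiplication, translating $\mid_p$ via $P(K)$ together with two memberships in $\mbox{INT}(K,\pp,t)$ forcing equal $\pp$-orders, and enforcing positivity with the auxiliary condition $z/t \in \mbox{INT}(K,\pp,t)$. The observation that the coefficients all lie in a finitely generated $K_0$ and that the system can be collapsed to a single polynomial over a non-algebraically-closed field is likewise as in the paper.
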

  
\section{Rewriting equations over finite extensions}
\label{sec:rewrite}
In constructing Diophantine definitions it is often convenient to work
over finite extension of the given field, sometimes in fixed
extensions and
sometimes in extensions of bounded degree.  The theorem below and its corollaries
allow us to do this.  The next theorem is Lemma B.7.5 in the Number
Theory Appendix of \cite{Sh34}.

\begin{theorem}
\label{bounddegree}
Let $K$ be a field, let $\tilde K$ be the algebraic closure of $K$, let 
\[
g(X,Z_1,\ldots,Z_{n_1}),
\]
\[
 f(T_1,\ldots, T_n,X_1,\ldots,X_{n_2}, Y_1,\ldots, Y_{n_3})
 \]
 be polynomials with coefficients in $K$, and let $A \subset K^n$ be
 defined in the following manner: $(t_1,\ldots, t_n) \in A$ if and
 only if there exist $z_1,\ldots,z_{n_1}, x_1,\ldots,x_{n_2} \in K, x
 \in \tilde K, y_1,\ldots,y_{n_3} \in K(x)$ such that
\[
g(x,z_1,\ldots,z_{n_1})=0 \land f(t_1,\ldots,t_n,x_1,\ldots,x_{n_2}, y_1,\ldots, y_{n_3})=0.
\]
In this case $A$ has a Diophantine definition over $K$.  Further,
there is a Diophantine definition of $A$ with coefficients depending
only on coefficients and degrees of $g$ and $f$ and it can be constructed
effectively from those coefficients.
\end{theorem}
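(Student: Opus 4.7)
The plan is to eliminate the algebraic variable $x \in \tilde K$ and the variables $y_i \in K(x)$ by rewriting everything in terms of $K$-rational variables, using the resultant to capture the ``common root'' condition.

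First I would set $d = \deg_X g$. The equation $g(x, z_1, \ldots, z_{n_1}) = 0$ exhibits $x$ as algebraic over $K$ of degree at most $d$, so $K(x) = K[x]$ and the elements $1, x, \ldots, x^{d-1}$ span $K(x)$ as a $K$-vector space (not necessarily as a basis). Therefore each $y_i \in K(x)$ admits a representation $y_i = \sum_{j=0}^{d-1} y_{ij}\, x^j$ with $y_{ij} \in K$. I would introduce the $y_{ij}$ as new $K$-variables.

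Next I would substitute these representations into $f$ to form
\[
F(X;\, t, x_i, y_{ij}) := f\bigl(t_1,\ldots,t_n,\; x_1,\ldots,x_{n_2},\; \textstyle\sum_j y_{1j}X^j,\,\ldots,\,\sum_j y_{n_3 j}X^j\bigr),
\]
a polynomial in $K[t, x_i, y_{ij}][X]$ whose $X$-degree is bounded in terms of $\deg f$ and $d$. Under this substitution, the condition $f(t, x_i, y_i)=0$ becomes $F(x)=0$, and the remaining task is to encode ``$F(X)$ and $g(X, z)$ share a common root in $\tilde K$, for some $z, y_{ij} \in K$.'' I would capture this by the vanishing of the resultant
\[
R(t, x_i, y_{ij}, z) := \operatorname{Res}_X\bigl(F(X;\,t, x_i, y_{ij}),\; g(X, z)\bigr),
\]
a polynomial in $K[t, x_i, y_{ij}, z]$ whose coefficients are determined polynomially by the coefficients of $f$ and $g$ and by their degrees; this yields the required effective dependence.

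The main obstacle I expect is the degenerate case in which the leading coefficients of $F$ and $g$ (in $X$) simultaneously vanish at a particular specialization: in this situation the Sylvester-form resultant can vanish without an actual common root in $\tilde K$. I would handle this by stratifying over the finitely many possible effective degrees of $g(X, z) = a_d(z)X^d + \cdots + a_0(z)$ (indexed by the first nonvanishing $a_k(z)$), and using the appropriate resultant on each stratum. The finitely many resulting polynomial conditions combine, by the standard ``disjunction via products and systems'' trick (and ultimately a single-equation reduction, which is available because the fields of interest here are not algebraically closed), into a single polynomial equation, producing the Diophantine definition of $A$ as claimed. Since every step is a polynomial manipulation of the coefficients of $f$ and $g$, the final definition depends only on these coefficients and the degrees and can be constructed effectively from them.
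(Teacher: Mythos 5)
The paper does not include its own proof of Theorem~\ref{bounddegree} but cites it as Lemma~B.7.5 of \cite{Sh34}, so I cannot compare against a proof given in the text. That said, your resultant-based argument is a correct and fairly standard way to establish the statement, and it delivers the effectiveness claim (the Sylvester determinant is a polynomial, depending only on the coefficients and degrees of $f$ and $g$, in the new $K$-variables $z$, $x_i$, $y_{ij}$ and the parameters $t$). The power-basis expansion $y_i = \sum_{j=0}^{d-1} y_{ij} x^j$ is valid because any $x$ with $g(x,z)=0$ and $g(\cdot,z)\not\equiv 0$ has $[K(x):K]\le d$, so $1,x,\dots,x^{d-1}$ spans $K[x]=K(x)$ even if $g$'s $X$-degree drops; and your stratification by effective $X$-degree of $g(X,z)$ correctly repairs the resultant when $a_d(z)$ vanishes, since on each stratum $g$'s leading coefficient is a unit and $\operatorname{Res}_X(F,g)=0$ is then genuinely equivalent to a common root, regardless of $F$'s actual degree.

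One small gap: your stratification, indexed by the first nonvanishing $a_k(z)$, omits the stratum $a_0(z)=\dots=a_d(z)=0$, i.e.\ $g(X,z)\equiv 0$ as a polynomial in $X$. On that stratum $x$ is no longer constrained to degree $\le d$, so the power-basis bound and the resultant argument both break down and a separate analysis is needed (essentially: $\tilde K$ is algebraically closed, so the condition reduces to asking whether $f(t,x_i,Y)=0$ has a $\tilde K$-point living in some simple extension, which again can be encoded by a Diophantine condition on the coefficients of $f$, though one must be a bit careful in positive characteristic where an extension generated by the $y_i$ need not be simple). In this paper's applications the polynomial $g$ is always monic in $X$ (e.g.\ $T^q-a$, $T^p-T+a$, and the polynomials defining $\beta_w$), so the degenerate stratum is empty and your argument goes through unchanged; but for the theorem as stated you should either add that stratum or record a non-degeneracy hypothesis on $g$. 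One further housekeeping remark: collapsing the finite disjunction over strata into a single polynomial uses that $K$ is not algebraically closed, which the theorem does not assume; the paper sidesteps this by observing that, for the fields where it is applied, the single-equation reduction of \cite[Lemma~1.2.3]{Sh34} is available.
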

The most often used versions of the theorem above are the following corollaries (though we will need the theorem also).
\begin{corollary}
\label{fixed}
Let $K$ be a field, let $G$ be a finite extension of $K$, let
$f(T,X_1,\ldots,X_{n_2}, Y_1,\ldots, Y_{n_3})$ be a polynomial with
coefficients in $K$, and let $A \subset K$ be defined in the following
manner: $t \in A$ if and only if there exist $x_1,\ldots,x_{n_2} \in
K, y_1,\ldots,y_{n_3} \in G$ such that
\[
f(t,x_1,\ldots,x_{n_2}, y_1,\ldots, y_{n_3})=0.
\]
In this case $A$ has a Diophantine definition over $K$.
\end{corollary}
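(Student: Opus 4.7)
My plan is to derive Corollary~\ref{fixed} directly from Theorem~\ref{bounddegree} by encoding the fixed finite extension $G/K$ through its defining polynomial. First I would pick a primitive element $\alpha$ of $G$ over $K$ (which exists whenever $G/K$ is simple, and in particular for any finite separable extension), and let $g(X) \in K[X]$ be its minimal polynomial. I would then apply Theorem~\ref{bounddegree} with this $g$, taking $n_1 = 0$ so that $g$ depends only on $X$, and with $f$ as given in the statement of the corollary.

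The key step is to check that the set $A$ of the corollary coincides with the set defined by Theorem~\ref{bounddegree}. In the easy direction, if $t \in A$ is witnessed by $x_1, \dots, x_{n_2} \in K$ and $y_1, \dots, y_{n_3} \in G$, then setting $x = \alpha$ gives $g(x) = 0$ and $y_i \in K(x) = G$, so $t$ lies in the set defined by the theorem. In the other direction, suppose $x \in \tilde K$ satisfies $g(x) = 0$ and $y_1, \dots, y_{n_3} \in K(x)$ satisfy $f(t, \bar x, \bar y) = 0$. Since $g$ is the minimal polynomial of $\alpha$, the element $x$ is a $K$-conjugate of $\alpha$, so there is a $K$-isomorphism $\sigma : K(x) \to K(\alpha) = G$. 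Applying $\sigma$ to the $y_i$ yields elements $\sigma(y_i) \in G$, and because $f$ has coefficients in $K$, the relation $f(t, \bar x, \sigma(\bar y)) = 0$ still holds. Hence $t \in A$, and Theorem~\ref{bounddegree} gives the desired Diophantine definition.

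The main (minor) obstacle I anticipate is the case where $G/K$ is not simple, which can occur for inseparable extensions in positive characteristic. There a single algebraic element $x$ no longer generates $G$. I would handle this by writing $G = K(\alpha_1, \dots, \alpha_m)$ and iterating the above argument through the tower $K \subset K(\alpha_1) \subset \cdots \subset G$, applying Corollary~\ref{fixed} at each step to reduce an existential statement over $K(\alpha_1, \dots, \alpha_i)$ to one over $K(\alpha_1, \dots, \alpha_{i-1})$, and finally to $K$. Alternatively, one can bypass the iteration by fixing a $K$-basis $e_1, \dots, e_d$ of $G$, writing each $y_j = \sum_k a_{jk} e_k$ with $a_{jk} \in K$, and expanding $f(t, \bar x, \bar y)$ in this basis to obtain a finite system of polynomial equations in $t, \bar x$ and the $a_{jk}$ over $K$; since the function field $K$ is not algebraically closed, this system can then be collapsed into a single equation by the remark cited from \cite{Sh34}.
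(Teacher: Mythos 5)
Your main argument (apply Theorem~\ref{bounddegree} with $n_1=0$ and $g$ the minimal polynomial of a primitive element of $G$, then move between $K(x)$ and $G$ via the $K$-isomorphism determined by $g(x)=0$) is exactly the intended reduction; the paper states Corollary~\ref{fixed} as an immediate consequence of Theorem~\ref{bounddegree} with no written proof, so there is no competing argument to compare against. Your worry about the primitive element theorem failing for inseparable extensions in characteristic $p$ is legitimate as a matter of generality, and of your two fixes the basis-decomposition one is the clean and fully correct option: writing $y_j=\sum_k a_{jk}e_k$ over a fixed $K$-basis of $G$ and using the structure constants turns $f=0$ into a finite system over $K$, which can be collapsed to a single equation precisely because $G\ne K$ forces $K$ to be non--algebraically-closed (and if $G=K$ there is nothing to prove). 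The tower-iteration fix, as you sketch it, has a gap you should be aware of: after one step the Diophantine definition obtained by the inductive hypothesis has its coefficients in the intermediate field $K(\alpha_1)$, not in $K$, so you cannot simply re-apply the corollary (or Theorem~\ref{bounddegree}) at the next level; one must first rewrite those $K(\alpha_1)$-coefficients as polynomials in a new variable $v$ constrained by the minimal polynomial of $\alpha_1$ over $K$, and check that any root of that polynomial lying in $K(\alpha_1)$ yields an equivalent relation via a $K$-automorphism --- which in effect is just the basis argument in disguise. So the proposal is correct, with the basis version being the argument that actually closes the inseparable case.
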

\begin{corollary}
\label{probabove}
Let $G/K$ be a finite extension of fields and assume Hilbert's Tenth
Problem is unsolvable over $G$ (if $G$ is uncountable, then assume we
are considering equations with coefficients in a finitely generated
subfield of $G$). In this case Hilbert's Tenth Problem is unsolvable
over $K$ (as above, if $K$ is uncountable, then assume we are
considering equations with coefficients in a finitely generated
subfield of $K$).
\end{corollary}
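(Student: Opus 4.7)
The plan is to prove the contrapositive: from an algorithm deciding solvability over $K$ of polynomial equations with coefficients in any prescribed finitely generated subfield of $K$ (taking ``all of $K$'' when $K$ is countable), I will produce a corresponding algorithm over $G$. So I am given a polynomial $F(Y_1,\dots,Y_n)$ with coefficients in some finitely generated $G_0\subseteq G$ (or in all of $G$, if $G$ is countable), and I must construct, effectively, an equation $H$ over a finitely generated subfield $K_0\subseteq K$ such that $F$ has a zero in $G^n$ if and only if $H$ has a zero over $K$.

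The first step reduces $G$-coefficients to $K$-coefficients by fixing a $K$-basis $\omega_1,\dots,\omega_d$ of $G$ together with structure constants $c_{ij}^k\in K$ defined by $\omega_i\omega_j=\sum_k c_{ij}^k\omega_k$, and expanding each coefficient of $F$ in the basis. This lets me rewrite $F=\sum_k \omega_k F_k(Y)$, where every $F_k$ has its coefficients in the finitely generated subfield $K_0\subseteq K$ generated by the $c_{ij}^k$ and by the basis components of the coefficients of $F$. By $K$-linear independence of the $\omega_k$, the equation $F(Y)=0$ in $G$ is equivalent to the system $F_k(Y)=0$ (for every $k$) in $G$. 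I then combine this system into a single polynomial equation $f(Y)=0$ over $K_0$ using the fact that $K$ is not algebraically closed, as in \cite[Ch.\ 1, \S 2, Lemma 1.2.3]{Sh34}.

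The second step is a direct invocation of Corollary~\ref{fixed}, applied with the given finite extension $G/K$, a dummy parameter $t$, and the polynomial $f$ just produced (whose coefficients lie in $K_0$). The corollary yields a Diophantine definition over $K$ of the set of $t\in K$ for which there exist $Y_1,\dots,Y_n\in G$ with $f(Y)=0$; tracing through the construction of Theorem~\ref{bounddegree} shows that every newly introduced coefficient can be kept in $K_0$. Specializing $t$ to any fixed element of $K_0$ produces the desired polynomial $H$ over $K_0$. Running the assumed $K$-algorithm on $H$ then decides whether $F=0$ has a zero in $G^n$, contradicting the hypothesis.

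The main (essentially bookkeeping) point is to verify that the finitely generated subfield $K_0$ depends only on $G_0$ and the fixed basis of $G/K$, and that the whole translation $F\mapsto H$ is effective and uniform in the input. Once this is checked, the corollary follows from Corollary~\ref{fixed} with no new ingredients, so I do not expect any genuine obstacle beyond this bookkeeping.
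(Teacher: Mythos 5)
There is a genuine gap in your first step. You expand only the \emph{coefficients} of $F$ in the basis $\omega_1,\dots,\omega_d$, obtaining $F=\sum_k\omega_kF_k(Y)$ with each $F_k$ having coefficients in $K$, and then assert that ``by $K$-linear independence of the $\omega_k$, the equation $F(Y)=0$ in $G$ is equivalent to the system $F_k(Y)=0$ (for every $k$) in $G$.'' This is false: with $Y$ ranging over $G$, the values $F_k(Y)$ lie in $G$, not in $K$, so $K$-linear independence of the $\omega_k$ gives you nothing. Concretely, take $K=\F_p(t)$ with $p\neq2$, $G=K(\sqrt t)$, $\omega_1=1$, $\omega_2=\sqrt t$, and
\[
F(Y)=(1+\sqrt t)\,Y-(1+t).
\]
Then $F_1(Y)=Y-(1+t)$ and $F_2(Y)=Y$, and the system $F_1(Y)=F_2(Y)=0$ has no solution in $G$, yet $F(Y)=0$ has the solution $Y=\frac{1+t}{1+\sqrt t}\in G$. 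So the single equation $f$ you build from the system is \emph{not} equivalent to $F=0$ over $G$, and the equation $H$ you ultimately hand to the $K$-algorithm answers the wrong question.

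The repair is to also expand the \emph{variables}: write $Y_i=\sum_j y_{ij}\omega_j$ with $y_{ij}\in K$, and use the structure constants (which you introduced but never actually used) to expand the products, obtaining $F(Y)=\sum_k g_k(\{y_{ij}\})\,\omega_k$ with each $g_k$ a polynomial in the $y_{ij}$ having coefficients in $K_0$. Now the coefficients in the $\omega_k$-expansion lie in $K$, so linear independence does apply, and $F(Y)=0$ with $Y\in G^n$ is equivalent to the system $g_k(\{y_{ij}\})=0$ with $y_{ij}\in K$. Combining this system into a single polynomial over $K_0$ (using Lemma 1.2.3 of~\cite{Sh34}) already gives a Hilbert's Tenth Problem instance purely over $K$, so once the variable expansion is done the appeal to Corollary~\ref{fixed} is no longer needed. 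Alternatively, if you do want a proof that runs through Theorem~\ref{bounddegree} or Corollary~\ref{fixed} with the $Y_i$ still ranging over $G$, you must first replace the $G$-coefficients of $F$ by $K$-polynomial expressions in a new variable $W$ satisfying the minimal polynomial of a primitive element of $G/K$ (and argue via the resulting $K$-automorphism of $G$ that solvability is preserved) --- but expanding only the coefficients and leaving $Y$ in $G$, as written, does not produce an equivalent system.
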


\section{Technical preliminaries}
\label{sec:technical}
\setcounter{equation}{0}

\begin{notationassumption}\label{Natasha}
  In this section we go over or prove several technical facts we need
  to construct our existential definition of $p$-th powers.  We will
  initially work under the assumption that the constant field is
  algebraically closed.  This assumption will be removed later.  Below
  we use the following notation and assumptions.
\begin{enumerate}
\item By a {\em function field (in $1$ variable)} over a field $k$ we mean a field
$K$ containing $k$ and an element $x$, transcendental over $k$,
 such that $K/k(x)$ is a finite
algebraic extension. The algebraic closure of $k$ in $K$ is called the
constant field of $K$, and it is a finite extension of $k$.
\item Let $M$ be a function field of genus $g >0$ over an algebraically closed field of constants $F$ of characteristic $p>0$.

\item Let $t \in M$ be such that it is not a $p$-th power.  (Since the
  constant field is perfect, this assumption implies $M/F(t)$ is
  separable.)
\item A \emph{prime} of $M$ is an $F$-discrete valuation of $M$. 
\item The degree of a prime is the degree of its residue field over
  the field of constants.  Under our assumption that the constant
  field is algebraically closed, the degree is always one.
\item A divisor is an element of the free abelian group on
  the set of primes of $M$. We will denote the group law
  multiplicatively.
\item If $\mathfrak I$ is an integral (or effective) divisor, we will
  denote by $\deg \mathfrak I$ the degree of $\mathfrak I$, i.e.\ the
  number of primes in the product (counting multiplicity).
\item If $\mathfrak I$ is an integral divisor and $\pp$ is a prime,
  then $\ord_{\pp}\mathfrak I$ is the multiplicity of $\pp$ in the
  product.
\item If $\mathfrak I_1$ and $\mathfrak I_2$ are integral divisors, we
  write $\mathfrak I_1 \mid \mathfrak I_2$ ($\mathfrak I_1$ divides
  $\mathfrak I_2$) to mean that for all primes $\pp$ of $K$ we have
  that $\ord_{\pp}\mathfrak I_1 \leq \ord_{\pp}\mathfrak I_2$.
  Similarly for any prime $\pp$ of $M$ we write that $\pp \mid \mathfrak
  I_1$ ($\pp$ divides $\mathfrak I_1$) to mean $\ord_{\pp}\mathfrak
  I_1 >0$.
\item For $x \in M$, let $\nn(x)$ denote the zero divisor of $x$ and $\dd(x)$ the
pole divisor of $x$. Let $\DD(x)=\frac{\nn(x)}{\dd(x)}$ be the divisor of $x$. Let $H(x)$
denote the height of $x$, i.e.\ $\deg \dd(x) = \deg \nn(x)$, and if $\pp$ is a prime, let
$$
\ord_{\pp}(x)=\ord_{\pp}\mathfrak D(x)=\ord_{\pp}\mathfrak n(x)-\ord_{\pp}\mathfrak d(x).
$$

\item Since the extension $M$ over $F(t)$ is separable, we can
  define a global derivation with respect to $t$. Over $F(t)$, we use
  the usual definition of the derivative, and we use implicit
  differentiation to extend a derivation to the extension
  (see~\cite[p.\ 9 and p.\ 94]{Mason}). Given an element $x$ of $M$, the
  derivative with respect to $t$ will be denoted in the usual fashion
  as $x'$ or $\frac{dx}{dt}$. Observe that usual differentiation rules
  apply to the global derivation with respect to $t$.
\item For any prime $\pp$ of $M$, we can also define a local
  derivation with respect to the prime $\pp$. More specifically, if
  $\pi$ is any local uniformizing parameter with respect to $\pp$ (any
  element of $M$ which has order one at $\pp$), in the $\pp$-adic
  completion of $M$, every element $x$ of the field can be written as
  an infinite power series
$$
\sum_{i=m}^{\infty}a_i\pi^i
$$
with $m\in\Z$ and $a_i\in F$. Given this representation, we denote
$$
\frac{\partial x}{\partial \pp}=\sum_{i=m}^{\infty}ia_i\pi^{i-1}
$$
(see~\cite[p.\ 9 and p.\ 96]{Mason}). Observe that
$\ord_{\pp}(\frac{\partial x}{\partial \pp})$ is independent of the
choice of the local uni\-for\-mizing parameter.
\item For all primes $\pp$ of $M$, let
$$
d_t(\pp)=\ord_\pp\left(\frac{\partial t}{\partial \pp}\right).
$$

\item\label{VectorSpace} If $\mathfrak U=\frac{\mathfrak A}{\mathfrak
    B}$, where $\mathfrak A$ and $\mathfrak B$ are integral divisors,
  then, we will write
$$
L(\mathfrak U)=\{f\in M \mid \ord_{\pp}f\geq\ord_{\pp}\mathfrak{A}-\ord_\pp\mathfrak{B}\mbox{ for all primes }\pp \mbox{ of } M\}\cup\{0\}, 
$$
where $L(\mathfrak U)$ is a vector space over $F$, and $\ell(\mathfrak A)$ for the dimension of $L(\mathfrak A)$ over $F$.
\end{enumerate}
\end{notationassumption}

The following lemma gathers some general formulae we need in this section.

\begin{lemma}\label{Formulae}
$\left. \right.$
\begin{enumerate}
\item \label{FormulaOrd} Let $E$ be a finite degree subfield of a
  function field $K$. Let $\PP$ be a prime of $E$ and let
  $\pp_1,\dots,\pp_n$ be the primes in $K$ above $\PP$. Let
  $e(\pp_i/\PP)$ be the ramification index of $\pp_i$ over $\PP$. Let
  $f(\pp_i/\PP)$ be the relative degree of $\pp_i$ over $\PP$ (the
  degree of the extension of the residue field). We have
$$
[K:E]=\sum_{i=1}^ne(\pp_i/\PP)f(\pp_i/\PP).
$$
If the field of constants of $E$ is algebraically closed, the relative degrees will always be equal to one.
\item\label{RiemannRoch} (Riemann-Roch) Let $\mathfrak{U}
  =\frac{\mathfrak A}{\mathfrak B}$ be a ratio of integral divisors of
  $K$ such that $\deg{\Bb} -\deg{\Aa}=d \in \Z$.
\begin{enumerate}
\item\label{RR1} If $g=0$ and $d\geq0$ then $\ell(\mathfrak A)= d+1$;
\item\label{RR2} If $g>0$ and $0<d<2g-2$ then $\ell(\mathfrak A)\geq d-g+1$;
\item\label{RR3} If $g>0$ and $d=2g-2$ then $\ell(\mathfrak A)\geq g-1$;
\item\label{RR4} If $g>0$ and $d>2g-2$ then $\ell(\mathfrak A)= d-g+1$;
\end{enumerate}

\end{enumerate}
\end{lemma}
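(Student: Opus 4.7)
The plan is to treat the two parts separately; both are classical results in the theory of one-variable function fields, and I would ultimately cite a standard reference (e.g., Stichtenoth) for the detailed proofs, sketching the key inputs as follows.

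For part (\ref{FormulaOrd}) I would use the standard valuation-theoretic argument. Let $\mathcal{O}_\PP \subset E$ be the discrete valuation ring at $\PP$, with uniformizer $\pi$ and residue field $k_\PP$, and let $S$ be its integral closure in $K$. Then $S$ is a semilocal Dedekind domain whose maximal ideals $\mathfrak{m}_1,\dots,\mathfrak{m}_n$ correspond to the primes $\pp_i$ above $\PP$, with $e(\pp_i/\PP)$ the ramification index of $\mathfrak{m}_i$ over $\pi S$. The Chinese Remainder theorem gives
$$
S/\pi S \;\cong\; \prod_{i=1}^{n} S/\mathfrak{m}_i^{e(\pp_i/\PP)}.
$$
Computing $k_\PP$-dimensions of both sides yields $[K:E] = \sum_{i=1}^{n} e(\pp_i/\PP)\,f(\pp_i/\PP)$: the left side is $\dim_{k_\PP}(S/\pi S) = [K:E]$ because $S$ is free of rank $[K:E]$ over $\mathcal{O}_\PP$ in the one-variable function-field setting, and each factor on the right has $k_\PP$-dimension $e(\pp_i/\PP)\,f(\pp_i/\PP)$ by a standard filtration argument using powers of $\mathfrak{m}_i$. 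The last sentence of (\ref{FormulaOrd}) is then immediate: every residue field of a prime on $K$ or $E$ is a finite algebraic extension of the constant field, hence trivial when that constant field is algebraically closed, so every $f(\pp_i/\PP)$ equals $1$.

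For part (\ref{RiemannRoch}), I would derive all four sub-cases from a single application of the classical Riemann-Roch theorem. First I convert the paper's multiplicative notation into additive form: let $D$ be the divisor of degree $d$ corresponding (additively) to $\mathfrak{B}\mathfrak{A}^{-1}$, so that $L(\mathfrak{U}) = \mathcal{L}(D) = \{f \in M : \mathrm{div}(f) + D \geq 0\}\cup\{0\}$ directly from the definition. Riemann-Roch then reads
$$
\ell(D) \;=\; d - g + 1 + \ell(W - D),
$$
with $W$ a canonical divisor of degree $2g-2$ and $\ell(W-D) \geq 0$ in every case. When $g=0$ and $d\geq 0$, or when $g>0$ and $d>2g-2$, one has $\deg(W-D)<0$, hence $\ell(W-D)=0$, which gives the equalities in (\ref{RR1}) and (\ref{RR4}). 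When $d=2g-2$ or $0<d<2g-2$, the bound $\ell(W-D)\geq 0$ alone yields the inequalities $\ell(D)\geq g-1$ and $\ell(D)\geq d-g+1$ of (\ref{RR3}) and (\ref{RR2}).

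No step here is genuinely difficult; the only mild pitfall is the bookkeeping involved in translating between the multiplicative divisor notation used in the paper and the additive form in which Riemann-Roch is most commonly stated. Beyond that, the proof is essentially a citation of two standard theorems.
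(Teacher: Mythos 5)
Your proof is correct, and it is essentially the same in spirit as the paper's, which simply cites Fried--Jarden for part (1) and Koch for part (2) without further comment. You go somewhat further by sketching the underlying arguments: the $S/\pi S \cong \prod_i S/\mathfrak{m}_i^{e_i}$ dimension count for the fundamental identity, and the four-case analysis of $\ell(D) = d - g + 1 + \ell(W - D)$ using $\ell(W - D) = 0$ when $\deg(W - D) < 0$ and $\ell(W - D) \geq 0$ in general. This added detail is accurate; in particular your translation of the paper's multiplicative convention $L(\mathfrak{U}) = L(\mathfrak{A}/\mathfrak{B})$ into the additive divisor $D$ with $\deg D = \deg\mathfrak{B} - \deg\mathfrak{A} = d$ is the correct reading of the paper's Notation \ref{Natasha}(\ref{VectorSpace}), and you correctly interpret the lemma's (abusive) notation $\ell(\mathfrak{A})$ as $\ell(\mathfrak{U})$. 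The only thing I would add is a one-line remark that $S$ is indeed finitely generated over $\mathcal{O}_\PP$ in the function-field setting (so that freeness over the DVR applies), but that is standard.
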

\begin{proof}
  For \eqref{FormulaOrd} see~\cite[Proposition 2.3.2, Theorem
  3.6.1]{Friednew}. For \eqref{RiemannRoch} see~\cite[Theorem
  5.6.2]{Koch}.
\end{proof}
The next lemma is an elementary result from linear algebra that we need to make use of the Riemann-Roch Theorem.
\begin{lemma}
\label{vs}
If $V$ is a vector space of dimension $n >0$ over an infinite field of
scalars and $\{V_1, \ldots, V_m\}$ is a finite collection of subspaces
of $V$, each of dimension $n-1$, then $V \setminus \left(\bigcup_{i=1}^m
V_i\right)$ contains infinitely many elements.
\end{lemma}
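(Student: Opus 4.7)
The plan is to reduce the statement to the classical fact that a vector space over an infinite field is never a finite union of proper subspaces. Indeed, suppose that even a single vector $w \in V \setminus \bigcup_{i=1}^m V_i$ exists; then for every nonzero scalar $\lambda$ of the base field $F$ we have $\lambda w \notin V_i$ for any $i$, since $\lambda w \in V_i$ together with $\lambda \neq 0$ would force $w = \lambda^{-1}(\lambda w) \in V_i$. Because $F$ is infinite, the set $\{\lambda w : \lambda \in F^{\times}\}$ supplies infinitely many distinct elements of $V \setminus \bigcup_{i=1}^m V_i$.

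So it suffices to exhibit at least one such $w$. Since each $V_i$ has dimension $n-1 < n$, the $V_i$ are all proper, and I would argue by induction on $m$ that $V \neq V_1 \cup \cdots \cup V_m$. The case $m=1$ is immediate. For the inductive step, the induction hypothesis produces $w \in V \setminus (V_1 \cup \cdots \cup V_{m-1})$; if $w \notin V_m$ we are done, so assume $w \in V_m$. Choose any $u \in V \setminus V_m$ (which exists because $V_m$ is proper) and examine the infinite family $u + \lambda w$ as $\lambda$ ranges over $F$. No $u + \lambda w$ lies in $V_m$, because $w \in V_m$ would then force $u \in V_m$; and for each $i < m$, at most one value of $\lambda$ can place $u + \lambda w$ in $V_i$, since two such $\lambda_1 \neq \lambda_2$ would yield $(\lambda_1 - \lambda_2)w \in V_i$ and hence $w \in V_i$, contrary to the choice of $w$. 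Thus at most $m-1$ scalars are forbidden, and any other $\lambda$ (which exists since $F$ is infinite) gives a vector in $V \setminus \bigcup_{i=1}^m V_i$.

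There is no real obstacle here; the only subtlety is to invoke the infiniteness of $F$ at exactly the two points where it is needed, namely to avoid the finitely many bad $\lambda$ in the inductive step, and to produce infinitely many scalar multiples at the end. The dimension assumption $\dim V_i = n-1$ is used only through the weaker fact that each $V_i$ is a proper subspace.
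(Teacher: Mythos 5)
Your proof is correct, but it takes a genuinely different route from the paper's. The paper fixes a basis $v_1,\dots,v_n$ of $V$, works with a finite grid $\{\sum b_i v_i : b_i \in A_r\}$ built from an $r$-element subset $A_r$ of the scalar field, estimates that each $V_i$ meets the grid in at most $r^{n-1}$ points (after writing a basis of $V_i$ in the form $w_{i,j}=v_j+c_{i,j}v_n$), and concludes $|C_r|\geq r^n - m r^{n-1}\to\infty$. You instead reduce to the classical statement that a vector space over an infinite field is never a finite union of proper subspaces, prove that by the standard induction on $m$ using the affine pencil $u+\lambda w$ (each hyperplane excludes at most one $\lambda$, and $V_m$ excludes none), and then blow up one witness $w$ into infinitely many via the nonzero scalar multiples $\lambda w$, which stay outside every $V_i$. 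Your argument is more elementary (no coordinates, no counting), and you correctly note it uses only properness of the $V_i$, not $\dim V_i = n-1$; the paper's counting proof is more explicit and even gives a quantitative lower bound on the number of good grid points, but that extra information is not used elsewhere. Both proofs are complete and valid.
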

\begin{proof}
  Let $A$ be a countable subset of the field of scalars, let
  $\{a_1,a_2,\ldots\}$ be an ordering of $A$, and let $A_j =
  \{a_1,\ldots, a_j\}$. It is enough to show that
  \[
 C=\left\{\sum_{i=1}^nb_iv_i, b_i \in A \right\} \setminus (V_1 \cup \ldots \cup V_m)
 \]
  is infinite, and note further that it is enough to show that $C$ contains finite subsets of arbitrary size. Let
  \[
  C_r= \left\{\sum_{i=1}^nb_iv_i: b_i \in A_r\right\} \setminus (V_1 \cup \ldots \cup V_m)
\]
and observe that $C_r \subset C$ for all $r \in \Z_{>0}$.  Select a
basis $v_1,\ldots, v_n$ for $V$.  If we fix $i=1,\ldots,m$, then
without loss of generality, after possibly renumbering the elements
$v_1,\ldots,v_n$ (with the renumbering  depending on $i$), we can assume that for $j=1,\ldots, n-1$, each $V_i$ has a basis $\{w_{i,1},\ldots, w_{i,n-1}\}$, where
 $w_{i,j}=v_j +c_{i,j}v_n$.  Let
 \[
 u \in \left\{\sum_{i=1}^nb_iv_i: b_i \in A_r\right\}  \cap V_i.
 \] 
  In this case $u = \sum_{j=1}^{n-1}b_j w_{i,j}$, where $b_j \in A_r$.  Thus, 
  \[
  \left |\left \{\sum_{i=1}^nb_iv_i : b_i \in A_r \right \}  \cap V_i \right |  = r^{n-1}.
  \]
    Therefore 
    \[
    |C_r| \geq (r^n -mr^{n-1}) \rightarrow \infty \mbox{ as }r \rightarrow \infty.
    \]  
\end{proof}
Below is the first application of the Riemann-Roch Theorem we need.
\begin{lemma}
\label{Using RR}
If $\ttt$ is a prime of $M$ and $\Aa$ and $\Bb$ are two integral
relatively prime divisors of $M$, both also relatively prime to
$\ttt$, then there exists $y \in M$ such that $\displaystyle \dd(y)
=\ttt^{2g +1+ \deg \Aa}$ and $\nn(y)=\Aa\CC$, where $\CC$ is an
integral divisor relatively prime to $\Aa$ and $\Bb$.  Further, $\deg
\CC =2g+1$.
\end{lemma}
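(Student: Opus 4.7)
The plan is to apply Riemann-Roch to a carefully chosen Riemann-Roch space and then use Lemma~\ref{vs} to eliminate all unwanted divisor configurations. Set $n = 2g+1+\deg\Aa$ and consider
\[
L := L(\Aa/\ttt^{n}) = \{f \in M : \ord_\ttt f \geq -n,\ \ord_\pp f \geq \ord_\pp \Aa \text{ for all primes } \pp \neq \ttt\} \cup \{0\}.
\]
With the notation of Lemma~\ref{Formulae}, the associated parameter is $d = \deg \ttt^n - \deg \Aa = 2g+1 > 2g-2$, so part \eqref{RR4} of Riemann-Roch gives $\dim_F L = d - g + 1 = g+2$.

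Next I would write down the subspaces of $L$ corresponding to the failure of each desired divisor condition. For the condition $\ord_\ttt y = -n$, the obstruction is the subspace $L(\Aa/\ttt^{n-1}) \subset L$; its associated parameter is $d' = 2g > 2g - 2$, and \eqref{RR4} again gives dimension $g+1$. For each prime $\pp$ dividing $\Aa$, the obstruction to $\ord_\pp y = \ord_\pp \Aa$ is the subspace $L(\Aa\pp/\ttt^n)$, and for each prime $\pp$ dividing $\Bb$ (which is coprime to $\ttt$ and to $\Aa$), the obstruction to $\ord_\pp y = 0$ is likewise $L(\Aa\pp/\ttt^n)$. In both of these cases the new parameter is again $d' = 2g > 2g-2$, so \eqref{RR4} gives dimension $g+1$. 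Thus every obstruction is a hyperplane in $L$, and there are only finitely many of them.

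Since $F$ is algebraically closed, hence infinite, Lemma~\ref{vs} produces a $y \in L$ that lies outside every one of these hyperplanes. For such $y$ we have: $\ord_\ttt y = -n$, no other pole (by membership in $L$), $\ord_\pp y = \ord_\pp \Aa$ at every $\pp \mid \Aa$, $\ord_\pp y = 0$ at every $\pp \mid \Bb$, and $\ord_\pp y \geq 0$ at every remaining prime. Therefore $\dd(y) = \ttt^{2g+1+\deg\Aa}$ and $\nn(y) = \Aa\CC$ with $\CC$ integral and coprime to $\Aa$, $\Bb$, and $\ttt$. The degree balance $\deg\nn(y) = \deg\dd(y)$ then forces $\deg\CC = n - \deg\Aa = 2g+1$.

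There is no essential obstacle here; the argument is bookkeeping once the right space $L$ is chosen. The one point requiring care is the choice of exponent $n$: one needs $n$ large enough that both $L$ itself and each of the smaller spaces $L(\Aa/\ttt^{n-1})$, $L(\Aa\pp/\ttt^n)$ sit in the range $d > 2g-2$ where Riemann-Roch gives equality (and hence the hyperplanes are genuinely codimension one, not potentially of larger codimension). The value $n = 2g+1+\deg\Aa$ is exactly on the boundary of what is needed, and produces the stated degree $\deg\CC = 2g+1$ for the leftover zero divisor.
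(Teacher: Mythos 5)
Your proof is correct and follows essentially the same route as the paper: compute $\ell(\Aa/\ttt^{2g+1+\deg\Aa})=g+2$ by Riemann--Roch, identify the obstruction subspaces $L(\Aa/\ttt^{2g+\deg\Aa})$ and $L(\Aa\pp/\ttt^{2g+1+\deg\Aa})$ for $\pp\mid\Aa\Bb$ as hyperplanes of dimension $g+1$, and invoke Lemma~\ref{vs} over the infinite field $F$ to find $y$ outside their union. The degree count giving $\deg\CC=2g+1$ is also as in the paper.
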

\begin{proof}
  Let $\displaystyle \UU= \frac{\Aa}{\ttt^{2g +1+ \deg \Aa}}$ and note
  that by Lemma \ref{Formulae}(\ref{RiemannRoch}) we have
  that $$\ell(\UU) = g+2 >0.$$ Further, let
  $\displaystyle\UU_1=\frac{\Aa}{\ttt^{2g + \deg \Aa}}$ and observe
  that $\ell(\UU_1) =g+1$ while $\calL(\UU_1) \subset \calL(\UU)$.
  Finally, let $\calA$ be the set of all primes $\rr$ of $M$ such that
  either $\ord_{\rr}\Aa\not = 0$ or $\ord_{\rr}\Bb\not = 0$ and let
  $|\calA|=m$.  Set
  $\displaystyle\UU_{i+1}=\frac{\Aa\rr_{i}}{\ttt^{\deg \Aa+ 2g+1}}$,
  where $\rr_i$ is the $i$-th element of $\calA$ under some
  enumeration of $\calA$.  Observe that by Lemma
  \ref{Formulae}(\ref{RiemannRoch}) again $\ell(\UU_{i+1})=g+1$ for
  $i=1,\ldots, m$ while $\calL(\UU_{i+1}) \subset \calL(\UU)$.  (We
  remind the reader that since the constant field of $M$ is
  algebraically closed all the primes are of degree 1.)  Now consider
\[
y \in L(\UU) \setminus \left(\bigcup_{j=1}^{m+1}L(\UU_j)\right). 
\]

Such a $y$ exists by Lemma \ref{vs}.  By construction, we have that
$\dd(y)=\ttt^{2g +1+ \deg \Aa}$ and $\nn(y)=\Aa\Cc$, where $\Cc$ is
relatively prime to $\Aa\Bb\ttt$.  Finally, $$\deg \Cc = \deg \dd(y) -
\deg(\Aa) = 2g+1 +\deg \Aa -\deg\Aa=2g+1.$$
\end{proof} 
We now specialize the lemma above to a particular divisor.
\begin{corollary}
\label{cor:usingRR}
Suppose $w \in M$ is an element whose divisor is of the form
$\displaystyle\frac{\XX\Aa^{p^a}}{\YY\BB^{p^a}}$, where $\XX,\YY, \Aa,
\BB$ are pairwise relatively prime integral divisors, and $\deg \YY \geq
\deg \XX$. Assume further that the prime $\ttt$ is a factor of $\YY$,
and that for some positive
constant $C$ we have $\deg(\YY) < C$. Then $\displaystyle w = \xi
\frac{z_1^{p^a}}{z_2^{p^a}}$, where $\ttt$ is the only pole of $z_1$
and $z_2$, $\xi$ does not have a zero or a pole at any prime which is
a zero of $z_1$ or $z_2$, and $H(\xi) < (p^a+1)(C+g+1)$.
\end{corollary}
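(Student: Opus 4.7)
The plan is to apply Lemma \ref{Using RR} twice --- once for $\Aa$ and once for $\BB$ --- to produce auxiliary elements $z_1,z_2\in M$ whose only pole is $\ttt$ and whose zero divisors absorb the $\Aa^{p^a}$ and $\BB^{p^a}$ parts of $\DD(w)$; then I set $\xi:=wz_2^{p^a}/z_1^{p^a}$, so that these factors cancel and $w=\xi z_1^{p^a}/z_2^{p^a}$.

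Concretely, the first invocation, with $\Aa$ as the main divisor and the reduced support of $\BB\XX\YY$ (with $\ttt$ removed) as the auxiliary divisor, produces $z_1$ with $\dd(z_1)=\ttt^{2g+1+\deg\Aa}$ and $\nn(z_1)=\Aa\Cc_1$, where $\Cc_1$ is effective of degree $2g+1$ and disjoint from $\Aa,\BB,\XX,\YY,\ttt$. The second invocation, with $\BB$ and the analogous auxiliary divisor enlarged to include $\Cc_1$, gives $z_2$ with $\dd(z_2)=\ttt^{2g+1+\deg\BB}$, $\nn(z_2)=\BB\Cc_2$, and $\Cc_2$ of degree $2g+1$ disjoint from all previously specified primes. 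The balance $\deg\DD(w)=0$ forces $p^a(\deg\Aa-\deg\BB)=\deg\YY-\deg\XX=:e\geq 0$; writing $\YY=\ttt^m\YY'$ with $\gcd(\YY',\ttt)=1$, a direct divisor computation yields
\[
\DD(\xi)=\frac{\XX\,\Cc_2^{p^a}\,\ttt^{\,e-m}}{\YY'\,\Cc_1^{p^a}},
\]
with the $\ttt$-factor migrating to the denominator if $e<m$.

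For the height bound, the pole divisor of $\xi$ is bounded in degree by $\deg\YY+p^a(2g+1)<C+p^a(2g+1)$. The elementary inequality $C+p^a(2g+1)\leq(p^a+1)(C+g+1)$ rearranges to $p^a(C-g)+g+1\geq 0$, which holds after (harmlessly) enlarging $C$ to be at least $g$; this delivers $H(\xi)<(p^a+1)(C+g+1)$. The supports of $\Aa$ and $\BB$ are disjoint from every prime appearing in $\DD(\xi)$ by hypothesis, so $\xi$ has neither a zero nor a pole at any prime dividing $\Aa$ or $\BB$.

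The main obstacle is the full regularity claim at all zeros of $z_1,z_2$: the extraneous divisors $\Cc_1,\Cc_2$ produced by Riemann--Roch appear in $\DD(\xi)$ with multiplicity $\pm p^a$, so a naive construction makes $\xi$ pick up a pole along $\Cc_1$ and a zero along $\Cc_2$, and these are themselves zeros of $z_1$ and $z_2$. A strict reading of the corollary therefore forces one to arrange $\Cc_1=\Cc_2$, which is a condition on the class of $\Aa-\BB-(\deg\Aa-\deg\BB)\ttt$ in the degree-zero Picard group and does not hold in general; the honest route is to absorb this by a further coordinated application of Lemma \ref{Using RR} (feeding the already-chosen extraneous primes into the avoidance divisor of the next step, iterating until the extraneous supports of $z_1$ and $z_2$ coincide and cancel in $\DD(\xi)$), at the cost of slightly inflating the degrees of the $\Cc_i$'s --- which the slack $p^a(C-g)+g+1$ in the height bound was evidently designed to absorb. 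This coordination is the only non-routine step; once it is in place, the divisor and height computations are mechanical.
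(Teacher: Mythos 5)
Your central observation in the final paragraph is correct, and it identifies a real defect in the corollary as stated. After two applications of Lemma~\ref{Using RR} — which is exactly what the paper does — one obtains $z_1, z_2$ with $\nn(z_1)=\Aa\CC$, $\nn(z_2)=\Bb\DD$, and $\xi = w z_2^{p^a}/z_1^{p^a}$ then has divisor $\XX\DD^{p^a}\ttt^{\,p^a(\deg\Aa-\deg\Bb)}/(\YY\CC^{p^a})$. Since $\deg\CC=\deg\DD=2g+1>0$, the element $\xi$ does have a pole at every prime of $\CC$ and a zero at every prime of $\DD$, and these are zeros of $z_1$ and $z_2$ respectively. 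So the literal conclusion ``$\xi$ does not have a zero or a pole at any prime which is a zero of $z_1$ or $z_2$'' is not established by this construction (nor by the paper's, which is the same), and in fact cannot hold in general.

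Your proposed repair, however, does not work. Lemma~\ref{Using RR} always produces a \emph{new} extraneous divisor disjoint from the entire avoidance set you feed it; iterating and enlarging the avoidance set only guarantees the successive $\Cc_i$'s are pairwise disjoint, which is the opposite of forcing $\Cc_1=\Cc_2$. The equality $\Cc_1=\Cc_2$ is, as you note, a nontrivial condition in the degree-zero divisor class group, and no amount of repeated Riemann--Roch avoidance will arrange it. The iteration has no terminating mechanism.

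The correct resolution is to \emph{weaken the stated conclusion}, not to strengthen the construction. Inspecting the only place the corollary is invoked — inside the proof of Lemma~\ref{le:boundedheight} — what is actually used is just that $\ord_{\rr}\xi=0$ and $\ord_{\rr}z_1=0$ for every prime $\rr$ dividing $\Bb$ (and symmetrically for $\Aa$). This weaker property \emph{is} delivered by the construction, precisely because $\CC$ and $\DD$ are chosen relatively prime to $\Aa\Bb\XX\YY\ttt$. A corrected statement of the corollary should assert that $\xi$ has no zero or pole at any prime dividing $\Aa$ or $\Bb$ (equivalently, at any zero of $z_1$ or $z_2$ that is also a zero or pole of $w$), which is both true and sufficient for Lemma~\ref{le:boundedheight}. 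With that restatement, your divisor and height computations are correct. On the height: your bound $H(\xi)<C+p^a(2g+1)$ and the rearrangement to $p^a(C-g)+g+1\ge 0$ is right, and enlarging $C$ to be at least $g$ is harmless since only the finiteness of $C_3$ matters downstream; the paper's own chain of inequalities contains a slip (it writes $p^a(g+1)$ where $p^a(2g+1)$ is meant), but the conclusion is salvageable in the same way.
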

\begin{proof}
  First of all observe that $\deg \YY -\deg \XX =p^a \deg \Aa - p^a
  \deg \Bb < C$.  Further, by Lemma \ref{Using RR}, there exist $z_1,
  z_2 \in M$ with divisors $\displaystyle \frac{\Aa\CC}{\ttt^{\deg
      \Aa+2g+1}}$, $\displaystyle \frac{\Bb\DD}{\ttt^{\deg \Bb+2g+1}}$,
  respectively, such that $\Aa, \Bb, \CC, \DD, \XX, \YY$ are all
  pairwise relatively prime and such that $$\deg \CC=\deg \DD= 2g+1.$$ Let
  $\displaystyle\xi=w\frac{z_2^{p^a}}{z_1^{p^a}}$.  In this case
\[
(\xi)=\frac{\XX\Aa^{p^a}}{\YY\BB^{p^a}}\frac{\Bb^{p^a}\DD^{p^a}}{\ttt^{p^a(\deg
    \Bb+2g+1)}}\frac{\ttt^{p^a(\deg
    \Aa+2g+1)}}{\Aa^{p^a}\CC^{p^a}}=\frac{\XX}{\YY}\frac{\DD^{p^a}\ttt^{p^a(\deg
    \Aa-\deg\Bb)}}{\CC^{p^a}},
\]
and therefore $$H(\xi) \leq \deg \XX +p^a \deg \DD + p^a \deg \Aa - p^a \deg \Bb \leq C +p^a(g+1)+ p^aC <(p^a+1)(C+g+1).$$
\end{proof}
The next two lemmas deal with the relationship between the derivatives (global and local) and order at a prime.
\begin{lemma}\label{Mason}
Let $x\in M$ and $\ttt$ be a prime of $M$. We have
\begin{enumerate}
\item\label{Mason1} $\ord_\ttt(\frac{\partial x}{\partial \pp})\geq\ord_\ttt(x)-1$; and
\item\label{Mason2} if $\ord_\ttt(x)\geq0$, then $\ord_\ttt(\frac{\partial x}{\partial \ttt})\geq0$.
\end{enumerate}
\end{lemma}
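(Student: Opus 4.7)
The two parts have essentially different flavors because part (\ref{Mason2}) measures the order at the same prime $\ttt$ at which the derivation is taken, whereas part (\ref{Mason1}) takes a derivation at a prime $\pp$ that is a priori unrelated to $\ttt$. Part (\ref{Mason2}) admits a direct proof: writing the $\ttt$-adic expansion $x=\sum_{i\geq 0}a_i\pi_\ttt^i$ in a uniformizer $\pi_\ttt$ of $\ttt$, the definition of $\partial/\partial\ttt$ given in Notation~\ref{Natasha} yields $\partial x/\partial\ttt=\sum_{i\geq 1}ia_i\pi_\ttt^{i-1}$, and every term has order $\geq 0$ at $\ttt$.

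For part (\ref{Mason1}) such a direct $\ttt$-adic expansion is unavailable, since $\partial/\partial\pp$ is defined in the $\pp$-adic completion using a uniformizer $\pi_\pp$ at $\pp$. My plan is to compare the two local derivations through the global derivation $d/dt$ and thereby reduce to part (\ref{Mason2}). Since $M/F(t)$ is separable, the space of $F$-derivations of $M$ is one-dimensional over $M$, and the chain-rule identities $\partial x/\partial\pp=(dx/dt)/(d\pi_\pp/dt)$ and $\partial x/\partial\ttt=(dx/dt)/(d\pi_\ttt/dt)$ let the common factor $dx/dt$ cancel to give
\[
\frac{\partial x}{\partial \pp}=\frac{\partial x/\partial \ttt}{\partial \pi_\pp/\partial \ttt}.
\]
Taking $\ord_\ttt$ of both sides and invoking the expansion bound $\ord_\ttt(\partial x/\partial \ttt)\geq \ord_\ttt(x)-1$ from part (\ref{Mason2}) applied to $x$, the claim reduces to the auxiliary inequality $\ord_\ttt(\partial \pi_\pp/\partial \ttt)\leq 0$.

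The main obstacle is this residual bound on the comparison factor, since the behavior at $\ttt$ of a uniformizer $\pi_\pp$ at an unrelated prime $\pp$ is uncontrolled a priori: if $\pi_\pp$ happens to have a zero of high order at $\ttt$, or even merely a vanishing linear Taylor coefficient in $\pi_\ttt$, then $\ord_\ttt(\partial \pi_\pp/\partial \ttt)$ becomes strictly positive and the naive estimate breaks. I plan to resolve this by exploiting the freedom in the choice of uniformizer at $\pp$ used to define $\partial/\partial\pp$: using a Riemann--Roch argument in the spirit of Lemma~\ref{Using RR}, I can produce an element $\pi_\pp\in M$ with $\ord_\pp(\pi_\pp)=1$ and $\ord_\ttt(\pi_\pp)=0$ whose $\pi_\ttt$-expansion at $\ttt$ has nonzero first coefficient, so that $\partial \pi_\pp/\partial \ttt$ is a $\ttt$-adic unit and $\ord_\ttt(\partial \pi_\pp/\partial \ttt)=0$. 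Under this normalization the displayed identity gives exactly the asserted bound; I would then confirm that this choice is consistent with the way the lemma is applied in Section~\ref{sec:special}, where only $\ord_\pp$-data of $\partial/\partial\pp$ (which is uniformizer-independent) is ultimately needed.
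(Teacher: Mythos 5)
Your proof of part~(\ref{Mason2}) is correct and is the intended argument: from the $\ttt$-adic expansion $x=\sum_{i\ge 0}a_i\pi^i$ one gets $\frac{\partial x}{\partial\ttt}=\sum_{i\ge 1}ia_i\pi^{i-1}$ (the $i=0$ term dies because of the factor $i$), and every surviving term has nonnegative order. But you have misread part~(\ref{Mason1}): the ``$\pp$'' there is a typo for ``$\ttt$''. The intended claim is $\ord_\ttt\bigl(\frac{\partial x}{\partial\ttt}\bigr)\ge\ord_\ttt(x)-1$, which follows from exactly the same one-line expansion computation (if $\ord_\ttt(x)=m$ then every nonzero term $ia_i\pi^{i-1}$ with $i\ge m$ has order $\ge m-1$); this is the ``expansion bound'' you yourself invoke in passing while setting up part~(\ref{Mason1}), except that you mislabel it as a consequence of part~(\ref{Mason2}) — it is not implied by part~(\ref{Mason2}), it is part~(\ref{Mason1}). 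That this is the right reading is confirmed by every subsequent use of the lemma (Lemma~\ref{Ania} and Lemma~\ref{ramification} apply it only with the two primes equal), and by the paper's proof, which is simply a citation to Mason, p.~9.

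Under your literal reading with $\pp\ne\ttt$ the argument does not go through, for a reason you half-acknowledge but do not resolve. First, $\frac{\partial x}{\partial\pp}$ lives in the $\pp$-adic completion of $M$, so $\ord_\ttt$ of it is not defined in general. Second, even restricting to uniformizers $\pi_\pp\in M$, the paper's Notation~\ref{Natasha} only guarantees that $\ord_{\pp}\bigl(\frac{\partial x}{\partial\pp}\bigr)$ is independent of the choice of $\pi_\pp$; the order at a \emph{different} prime $\ttt$ genuinely depends on that choice. So producing one special $\pi_\pp$ with $\ord_\ttt\bigl(\frac{\partial\pi_\pp}{\partial\ttt}\bigr)=0$ via Riemann--Roch does not prove the inequality for the derivation $\frac{\partial}{\partial\pp}$ as defined — it proves a statement about one normalization, while for a different uniformizer the inequality can fail. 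The correct resolution is not to repair this, but to discard the cross-prime interpretation entirely: both parts are immediate from the defining power-series expansion at the single prime $\ttt$.
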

\begin{proof}
See~\cite[p.\ 9]{Mason}.
\end{proof}




\begin{lemma}\label{Ania}
Let $x\in M$ and let $\pp$ be a prime of $M$.
\begin{enumerate}
\item\label{gaga1} If $\ord_\pp(x)\geq0$, then
$
\ord_\pp(x')\ge\max(0,\ord_\pp(x)-1) -d_t(\pp).
$

\item\label{gaga2} If $\ord_\pp(x)<0$, then
$
\ord_\pp(x')\ge\ord_\pp(x)-1-d_t(\pp).
$
\end{enumerate}
\end{lemma}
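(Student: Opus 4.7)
The plan is to deduce Lemma~\ref{Ania} from Lemma~\ref{Mason} by means of the chain rule relating the global derivation $x' = dx/dt$ to the local derivation $\partial x / \partial \pp$ at $\pp$.

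First I would establish the identity
$$
\frac{\partial x}{\partial \pp} \;=\; \frac{\partial t}{\partial \pp}\cdot x'.
$$
Since $M/F$ has transcendence degree $1$ and $M/F(t)$ is separable (our standing assumption in Notation~\ref{Natasha}), the $M$-vector space of $F$-derivations of $M$ is one-dimensional. Both $D_t \colon x \mapsto x'$ and $D_\pi \colon x \mapsto \partial x / \partial \pp$ (for a local uniformizer $\pi$ at $\pp$) are nonzero $F$-derivations on $M$ (or, more precisely, on the $\pp$-adic completion, and the identity then pulls back to $M$). Evaluating the proportionality $D_\pi = c\, D_t$ at $t$ gives $c = \partial t/\partial \pp$, which yields the chain rule displayed above. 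Alternatively, this is exactly the content of implicit differentiation cited from \cite[p.~9, p.~94]{Mason}.

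Taking $\pp$-adic orders in the chain rule gives
$$
\ord_\pp(x') \;=\; \ord_\pp\!\left(\frac{\partial x}{\partial \pp}\right) - d_t(\pp).
$$
Now I would apply Lemma~\ref{Mason} to bound $\ord_\pp(\partial x/\partial \pp)$. If $\ord_\pp(x) \geq 0$, then Lemma~\ref{Mason}(\ref{Mason2}) gives $\ord_\pp(\partial x/\partial \pp) \geq 0$, while Lemma~\ref{Mason}(\ref{Mason1}) gives $\ord_\pp(\partial x/\partial \pp) \geq \ord_\pp(x)-1$; combining these yields $\ord_\pp(\partial x/\partial \pp) \geq \max(0,\ord_\pp(x)-1)$, which substituted into the displayed identity proves part~(\ref{gaga1}). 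If $\ord_\pp(x) < 0$, then Lemma~\ref{Mason}(\ref{Mason1}) alone gives $\ord_\pp(\partial x/\partial \pp) \geq \ord_\pp(x)-1$, and again substituting into the identity yields part~(\ref{gaga2}).

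There is no serious obstacle here; the only subtle point is the justification of the chain rule in the $\pp$-adic completion, and this is precisely a standard application of the one-dimensionality of the derivation module for a separable function field of one variable. Once the chain rule is in hand, both inequalities follow by routine bookkeeping on valuations together with the two cases of Lemma~\ref{Mason}.
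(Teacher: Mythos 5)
Your proposal is correct and follows essentially the same route as the paper: invoke the chain rule $\partial x/\partial\pp = (dx/dt)\,\partial t/\partial\pp$ from Mason, take orders at $\pp$, and apply Lemma~\ref{Mason} in the two cases. The aside about one-dimensionality of the derivation module is a valid alternative justification of the chain rule, but the argument is otherwise identical to the paper's.
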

\begin{proof}
By~\cite[p.\ 96]{Mason} we have that for any prime $\pp$ 
\begin{equation}\label{Michal}
\frac{\partial x}{\partial \pp}=\frac{dx}{dt}\frac{\partial t}{\partial \pp}.
\end{equation}
Hence if $\ord_\pp(x)\geq0$, then
$$
\ord_\pp(x')=\ord_\pp\left(\frac{dx}{dt}\right)=\ord_\pp\left(\frac{\partial x}{\partial \pp}\right)-\ord_\pp\left(\frac{\partial t}{\partial \pp}\right)\geq\max(0,\ord_\pp(x)-1)-d_t(\pp).
$$
If $\ord_\pp(x)<0$, then
$$
\ord_\pp(x')=\ord_\pp\left(\frac{dx}{dt}\right)=\ord_\pp\left(\frac{\partial x}{\partial \pp}\right)-\ord_\pp\left(\frac{\partial t}{\partial \pp}\right)\geq\ord_\pp(x)-1-d_t(\pp)
$$
by Lemma \ref{Mason}.
\end{proof}

\begin{lemma}
\label{ramification}
For any element $z \in M\setminus M^p$ there are at most $2g-2 +
2H(z)$ primes $\ttt$ of $M$ such that $d_z(\ttt)>0$, and for all
$M$-primes $\ttt$ we have that $d_z(\ttt) \leq 2g-2 +2H(z)$.
\end{lemma}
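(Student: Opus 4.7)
The plan is to identify $d_z(\ttt)$ with the local valuation at $\ttt$ of the differential $dz$, invoke the fact that $(dz)$ is a canonical divisor of degree $2g-2$ (using that $z\notin M^p$ makes $dz$ nonzero), and control the negative part of $(dz)$ in terms of the pole divisor of $z$.

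First I would check the pointwise identity $\ord_\ttt(dz)=d_z(\ttt)$. If $\pi$ is a local uniformizer at $\ttt$, then in the $\ttt$-adic completion $dz=(\partial z/\partial\ttt)\,d\pi$ and $\ord_\ttt(d\pi)=0$, so $\ord_\ttt(dz)=\ord_\ttt(\partial z/\partial\ttt)=d_z(\ttt)$. Because $F$ is algebraically closed (hence perfect) and $z\notin M^p$, the extension $M/F(z)$ is separable and $dz\neq 0$, so $(dz)$ is a canonical divisor with $\deg(dz)=2g-2$. Writing $(dz)=\mathfrak N/\mathfrak P$ with $\mathfrak N,\mathfrak P$ coprime and effective, one has $\deg \mathfrak N-\deg\mathfrak P=2g-2$. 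Both conclusions of the lemma reduce to the single estimate $\deg\mathfrak N\leq 2g-2+2H(z)$: the first because each prime with $d_z(\ttt)>0$ contributes at least $1$ to $\deg\mathfrak N$, the second because $d_z(\ttt)\leq\deg\mathfrak N$ for every $\ttt$.

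It therefore remains to bound $\deg\mathfrak P\leq 2H(z)$. By Lemma~\ref{Mason}(\ref{Mason2}), $\ord_\ttt(z)\geq 0$ implies $d_z(\ttt)\geq 0$, so $\mathfrak P$ is supported on the poles of $z$; and Lemma~\ref{Mason}(\ref{Mason1}) (after reading the $\pp$ there as $\ttt$, or directly from expanding $z$ as a Laurent series in $\pi$) gives $d_z(\ttt)\geq \ord_\ttt(z)-1$ for any pole $\ttt$. Summing over the poles,
$$
\deg\mathfrak P\;\leq\;\sum_{\ttt\mid\dd(z)}\bigl(-\ord_\ttt(z)+1\bigr)\;=\;\deg\dd(z)+\#\{\text{poles of }z\}\;\leq\;2H(z),
$$
where the last inequality uses that each pole contributes at least $1$ to $\deg\dd(z)=H(z)$. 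This gives $\deg\mathfrak N\leq 2g-2+2H(z)$ and both statements follow.

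The main obstacle is not conceptual but careful interpretation: the setup in Notation~\ref{Natasha} fixes one separating element $t$ and defines $d_t(\pp)$, whereas here the derivation must be taken with respect to the variable element $z$. One has to confirm that the hypothesis $z\notin M^p$ is precisely what makes $dz$ a nonzero canonical differential and that the identity $\ord_\ttt(dz)=d_z(\ttt)$ remains valid at primes where $z$ has a pole. Once that is settled, the ``$+1$ per pole'' accounting in the pole bound for $(dz)$ is the only substantive arithmetic step.
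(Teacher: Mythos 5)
Your proof is correct and takes essentially the same route as the paper: both use that $\sum_\ttt d_z(\ttt) = 2g-2$ (you phrase this as $\deg(dz)=2g-2$ for the canonical divisor, the paper cites Mason's Equation (5)), both bound the negative part by $2H(z)$ via Lemma~\ref{Mason} and the estimate $d_z(\ttt)\geq\ord_\ttt(z)-1$ at poles, and both deduce the bound on the positive part by the identity positive part $=2g-2+{}$negative part.
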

\begin{proof}
By~\cite[Equation (5) p.\ 10]{Mason}, we have
$$
\sum_{\ttt} d_z(\ttt)=\sum_{\ttt}\ord_\ttt\left(\frac{\partial
    z}{\partial\ttt}\right)=2g-2,
$$
since $z$ has non-zero global derivative.  By Lemma~\ref{Mason}, if
$\ord_{\ttt}\partial z/\partial \ttt <0$, then $\ord_{\ttt}z <0$.
Thus,
\[
\sum_{\ord_{\ttt}\partial z/\partial \ttt <0}|\ord_{\ttt}\partial z/\partial \ttt| \leq  \sum_{\ord_{\ttt} z< 0}(|\ord_{\ttt}z| +1) \leq 2H(z).
\]
Further,
\[
\sum_{\ord_{\ttt}\partial z/\partial \ttt >0}\ord_{\ttt}\partial z/\partial \ttt =  2g-2 + \sum_{\ord_{\ttt}\partial z/\partial \ttt <0}|\ord_{\ttt}\partial z/\partial \ttt|  \leq 2g-2+ 2H(z).
\]
\end{proof}
The last technical lemma of this section deals with the case of $d_t(\ttt)=0$.

\begin{lemma}
\label{noram}
If $\ttt$ is a prime of $M$ which is unramified over $F(t)$, and  $\ttt$ is not a pole of $t$, then $d_t(\ttt) =0$.
\end{lemma}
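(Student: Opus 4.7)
The plan is to use the fact that, under the hypotheses, we can pick a very explicit local uniformizing parameter at $\ttt$, namely a translate of $t$ itself, and then read off the local derivative directly from that power series expansion.

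First I would identify the prime $\PP$ of $F(t)$ lying below $\ttt$. Since $\ttt$ is not a pole of $t$, the prime $\PP$ is a finite prime of $F(t)$, and because the constant field $F$ is algebraically closed, $\PP$ corresponds to $t - a$ for some $a \in F$. Thus $t - a$ has order $1$ at $\PP$, and is a local uniformizing parameter at $\PP$.

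Next, I would use the unramifiedness assumption: since $e(\ttt/\PP) = 1$, we have $\ord_\ttt(t - a) = e(\ttt/\PP) \cdot \ord_\PP(t - a) = 1$, so $\pi := t - a$ is also a local uniformizing parameter at $\ttt$. Then the $\pp$-adic expansion of $t$ in the completion at $\ttt$ is simply $t = a + \pi$. Applying the definition of the local derivation with respect to $\ttt$ (item (12) of Notation and Assumptions~\ref{Natasha}) term by term gives $\partial t/\partial \ttt = 1$, which has order $0$ at $\ttt$. Hence $d_t(\ttt) = \ord_\ttt(\partial t/\partial \ttt) = 0$, as desired.

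There is no real obstacle here; the only thing to be mildly careful about is invoking the fact (noted in item (12)) that $\ord_\ttt(\partial t/\partial \ttt)$ is independent of the chosen uniformizer, so that we are free to use $\pi = t - a$ rather than an arbitrary uniformizer. Everything else is an immediate calculation once $t - a$ is identified as a uniformizer at $\ttt$.
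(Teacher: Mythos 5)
Your proof is correct and follows essentially the same route as the paper: identify $t - a$ (the paper writes $t - c$) as a local uniformizing parameter at $\ttt$ via unramifiedness, read off the $\ttt$-adic expansion $t = a + \pi$, and conclude $\partial t/\partial \ttt = 1$. Your version simply spells out the preliminary identification of the $F(t)$-prime below $\ttt$ a bit more explicitly.
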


\begin{proof}
  If $\ttt$ is unramified over $F(t)$ and it is not a pole of $t$,
  then for some $c \in F$ we have that $\ord_{\ttt}(t-c) =1$.  Thus if
  we set $t-c=\pi$, a $\ttt$-adic expansion of $t$ is of the form
  $c+\pi$, and the derivative of that expression with respect to $\pi$
  is 1, implying $\ord_{\ttt}\partial t/\partial \ttt=0$.
\end{proof}

From this lemma we derive a corollary which will help us construct
$p$-th powers.  The proof of the corollary follows directly from Lemma
\ref{Ania} and Lemma \ref{noram}

\begin{corollary}
\label{cornoram}
If $\ttt$ is a prime of $M$ which is unramified over $F(t)$, and $\ttt$ is not
a pole of $t$, then for any $x$ which is integral at $\ttt$ we have
$\ord_{\ttt} dx/dt \geq \max(0,\ord_{\ttt}x -1)$.

\end{corollary}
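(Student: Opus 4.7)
The plan is to combine Lemma \ref{Ania}(\ref{gaga1}) with Lemma \ref{noram} directly, as indicated in the statement itself. The corollary is essentially a specialization of the lower bound for $\ord_\ttt(x')$ to the case where the ``correction term'' $d_t(\ttt)$ vanishes.

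First I would invoke Lemma \ref{noram}: by hypothesis, $\ttt$ is unramified over $F(t)$ and is not a pole of $t$, so that lemma yields $d_t(\ttt)=0$. Next, since $x$ is assumed to be integral at $\ttt$, we have $\ord_\ttt(x)\geq 0$, which is precisely the hypothesis of Lemma \ref{Ania}(\ref{gaga1}). Applying that lemma gives
\[
\ord_\ttt(x')\geq \max(0,\ord_\ttt(x)-1)-d_t(\ttt).
\]
Substituting $d_t(\ttt)=0$ yields the desired inequality
\[
\ord_\ttt\!\left(\frac{dx}{dt}\right)\geq \max(0,\ord_\ttt(x)-1).
\]

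There is no real obstacle here: the work has already been done in Lemmas \ref{Ania} and \ref{noram}, and the corollary is just the clean combination of these two facts. The only thing to be careful about is matching notation, namely that $x'$ in Lemma \ref{Ania} means the global derivative $dx/dt$ with respect to the chosen transcendental $t$, which is exactly the quantity appearing in the corollary.
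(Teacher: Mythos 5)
Your proof is correct and is exactly the argument the paper intends; the paper itself remarks that the corollary ``follows directly from Lemma \ref{Ania} and Lemma \ref{noram},'' and you have simply spelled out that combination.
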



\section{Defining $p$-th powers}
\label{sec:special}
\setcounter{equation}{0}

In this section we construct an existential definition of the set
$P(K)$ of $p^s$-th
powers.  We start under the assumption that the field of constants is
algebraically closed and remove this assumption later in Subsection
\ref{arbit}.  As for any construction of a Diophantine definition, the
construction of the set of $p^s$-th powers
has two main parts: one part consists of showing that the given
equations have at most $p^s$-th powers as their solutions. For the
second part we have to show that elements of $P(K)$ are in fact
solutions.  As it turns out, the second part is trivial in our case and
we will delay it until the very end in Lemma \ref{satisfy}.  The bulk
of the section below will be devoted to showing that the only elements
that can be solutions of our equations are the elements of $P(K)$. We do this in several
steps. As in earlier papers, the first part will be devoted to dealing
with the $p^s$-th powers of a particular element, the second part will
deal with $p^s$-th powers of elements of the field with simple zeros
and poles, and finally the third part will address the case of
arbitrary elements.

\subsection{Defining $p$-th powers of a particular element}
\label{sec:new}

The most difficult part of the argument is the first one: defining
$p^s$-th powers of a particular element.  We outline this construction
before proceeding with the technical details. We first fix a
non-constant element $t$ of $M$ satisfying certain conditions
described below and let $a=1$ or $a=2$ depending on the characteristic
of the field.  Next we let $z \in M$ be such that the equations in
Lemma~\ref{le:boundedheight} below are satisfied with $w=z+c$ for a
sufficiently large number of $c$'s. Here the requisite number
ultimately depends on the genus $g$ of $M$ only.  The equations lead
us to conclude that either $z$ has ``bounded'' height (with the bound
ultimately depending on $g$ only) or that the divisor of $z+c$ is a
$p^a$-th power of another divisor for all $c$'s.  In the first case we
use the equations from Proposition~\ref{prop:weak} and
Corollary~\ref{finalpart1} to conclude that $z \in F(t)$, and
Propositions~\ref{prop:version1} and \ref{prop:below} to conclude that
either $z$ is a $p^a$-th power of another field element or it is equal
to $t$.  In the second case we use Lemmas \ref{le:coefficients},
\ref{lemma:norms} and \ref{lemma:divpth} to conclude that $z$ is a
$p^a$-th power of another element.
  
Once we conclude that $z$ is a $p^a$-th power of another element, we
``take a $p^a$-th root'' of all the equations (or perform a form of
descent) and re-examine the resulting equations.  Since this descent
cannot continue forever, at some point we can conclude that $z$ was a
$p^s$-th power of $t$ for some $s \in \Z_{>0}$.

\begin{notationassumption}%
\label{notation: pthpowers}
Below we add to the notation and assumptions above.
\begin{itemize}%
\item  Let $a =1$, if $p >2$ and let $a=2$, if $p=2$.
\item Let $t \in M$ be such that no zero or pole of $t$ is ramified in
  the extension $M/F(t)$ or alternatively all zeros and poles of $t$
  are simple.
\item Denote the zero divisor of $t$ by $\Pp$ and the pole divisor by
  $\Qq$.  (We will also use the same notation for the primes which are
  the zero and the pole of $t$ in $F(t)$.)  Let $\Pp=\prod_i \pp_i,
  \Qq=\prod_i \qq_i$ be the factorization of $\Pp$ and $\Qq$ into
  distinct prime divisors of $M$.
\item Let $\calE$ be the set of all primes ramifying  in the extension $M/F(t)$ and let $e$ be the size of the set.
\item Let $M^G$ be the Galois closure of $M$ over $F(t)$.  Let $i_G=[M^G:M]$.
\item For $j=1,\ldots, k$, let $\sigma_j: M \longrightarrow M^G$ be an embedding of $M$ into $M^G$ over $F(t)$.
\item Let $\Omega=\{ \omega_1=1, \ldots, \omega_k \}$ be a basis of $M$ over
  $F(t)$. 
  \item Let $H_{\Omega}=\max\{H_{M^G}(\omega_i), i=1,\ldots,k\}$, where $H_{M^G}$ is the height in $M^G$.
\item Let $C=H(t)$.  (In Lemma \ref{heigtoft} we show that we can always assume that $C \leq \max(1, 2g-1)$.)
\item Let $C_1=2g-2 + (p^a+1)(C+g+1)$.
\item Let $\displaystyle C_2=\frac{ 2g-1 + (p^a +1)(C+g+1) }{p^a-1}$.
\item Let $C_3= C +p^aC_1C_2$.
\item Let $C_4=k!k^kH_{\Omega}C_3$
\item Let $C_5=C_4 + 2e +2k + 4H(t) +2$.
\item Let $C(F)=\{c_0, \dots,c_{C_5}\} \subset F_0$ -- the algebraic
  closure of $\F_p$ in $F$, let $d_{i,j}=c_i^{p^j}$, let
\[
V_i =  \{c_i^{p^k}, k\in \Z_{\geq 0}\},
\]
 and let $r_i :=|V_i|$.  Assume for
  $i \not=j$, for any $n_i, n_j \in \Z_{\not = 0}$ we have that
  $c_i^{n_i}\not = c_j^{n_j}$.  (The existence of $C(F)$ in a finite
  extension of $\F_p$ can be seen from the following inductive
  argument: let $c_0 \in \F_p, c_i \in F_0 \setminus
  \F_p(c_0,\ldots,c_{i-1}$).)
\item If $z \in M \setminus F$, let $C_z \subset C(F)$ be the such
  that $c \in C_z$ if and only if $c \in C(F)$ and $z-c^{p^s}$ does
  not have a zero at any prime which is a zero or a pole of $t$ or at
  any prime ramified in $M/F(t)$ for any positive integer $s$.
\item For a $w \in M$, let $\calV(w)$ be a set of primes of $F(t)$ satisfying the following requirements.
\begin{enumerate}
\item \label{it:1} Each prime of $\calV(w)$ is unramified in the extension $M/F(t)$.
\item \label{it:2} $w$ is integral at all primes of $\calV(w)$.
\item \label{it:3} $\{ \omega_1, \ldots, \omega_n \}$ is a local
  integral basis with respect to every prime of $\calV(w)$ or in other
  words every $\mathfrak A \in \calV(w)$ is relatively prime to the the
  discriminant of the basis.
\item The size of $\calV(w)$ is greater than $C_4$.
\end{enumerate}
\end{itemize}%
\end{notationassumption}
We start with a sequence of preliminary lemmas, some of them coming from earlier papers and included here for the convenience of the reader.

 \begin{lemma}[Essentially \cite{Sh34}, Lemma 8.2.10]
\label{cor:C(w)}%
For any $u, w \in M \setminus F$, we have that $|C_w|$ and $|C_w \cap C_u|$
contain more than $C_4 +2k+2$ elements.
\end{lemma}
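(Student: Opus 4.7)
My plan is to count, for each $z\in\{w,u\}$, how many elements of $C(F)$ must be excluded from $C_z$, and to show that this is much smaller than $|C(F)|=C_5+1$. Call a prime of $M$ \emph{bad} if it is a zero of $t$, a pole of $t$, or a prime ramified in $M/F(t)$. Under the standing assumption in \ref{notation: pthpowers} that the zeros and poles of $t$ are simple and unramified in $M/F(t)$, and since $F$ is algebraically closed so that every prime has degree one, these three classes of bad primes are pairwise disjoint and contribute $H(t)$, $H(t)$, and $e$ primes respectively, for a total of at most $2H(t)+e$.

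The key step will be to show that each bad prime $\mathfrak r$ excludes at most one element of $C(F)$, by a case analysis on the residue of $z$ at $\mathfrak r$. If $z$ has a pole at $\mathfrak r$, then $z-c^{p^s}$ also has a pole at $\mathfrak r$ for every $c\in F$, so $\mathfrak r$ excludes no $c$. If $z$ is integral at $\mathfrak r$ with residue $\bar z\in F$, then $z-c^{p^s}$ has a zero at $\mathfrak r$ iff $c^{p^s}=\bar z$. The subcase $\bar z=0$ forces $c=0$ and so excludes at most one element. If $\bar z\neq 0$, the defining property of $C(F)$, namely $c_i^{n_i}\neq c_j^{n_j}$ for $i\neq j$ and nonzero $n_i,n_j$, makes the orbits $V_i$ pairwise disjoint; hence $\bar z$ lies in at most one $V_i$, so at most one $c_i\in C(F)$ satisfies $c_i^{p^s}=\bar z$ for some $s\geq 1$. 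Summing over bad primes yields $|C(F)\setminus C_z|\leq 2H(t)+e$.

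To finish I would plug in $C_5=C_4+2e+2k+4H(t)+2$ to obtain
\[
|C_w|\geq (C_5+1)-(2H(t)+e)=C_4+e+2k+2H(t)+3>C_4+2k+2,
\]
and, by a union bound on the excluded sets for $w$ and $u$,
\[
|C_w\cap C_u|\geq (C_5+1)-2(2H(t)+e)=C_4+2k+3>C_4+2k+2,
\]
which gives both conclusions. The only step requiring genuine thought is the residue-field case analysis at a bad prime; the rest is bookkeeping with the constants fixed in \ref{notation: pthpowers}.
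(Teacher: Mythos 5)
Your proof is correct and follows essentially the same approach as the paper's (commented-out) proof, which considers the table with rows $w-c_l, w-c_l^p, \ldots$ and uses the fact that distinct entries differ by a nonzero constant so a single prime can be a zero of at most one entry. Your phrasing in terms of the residue $\bar z$ at each bad prime together with the pairwise disjointness of the Frobenius orbits $V_i$ is just a more explicit rendering of the same key observation, and your bookkeeping with $C_5 = C_4 + 2e + 2k + 4H(t) + 2$ recovers the claimed bounds.
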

Compared to the lemma in the citation we need more constants, so we
start with more constants, and in our case $C_4+2k$ replaces
$n$, but otherwise the argument is the same.
\comment{
\begin{proof}%
Consider the following table.
\[%
\left [
\begin{array}{ccccc}%
w-c_1&w-c_1^p& \ldots&w-c_1^{p^j}&\ldots\\%
\vdots&\vdots&\vdots&\vdots&\vdots\\%
w-c_l&w-c_l^p& \ldots&w-c_l^{p^j}&\ldots\\%
\end{array}%
\right ]
\]%

Observe that by assumption on the elements of $C(F)$ no two rows share
an element, and the difference between any two elements of the table
is constant and therefore any prime that occurs in the zero divisor of
an element in one row, does not occur in the zero divisor of any
element in any other row. Thus, elements of at least $C_5-e - 2H(t)$
rows have no zero at any element of ${\mathcal E}$, at any $\pp_i$ or
$\qq_i$, and consequently, $$|C_w| \geq C_5-e - 2H(t) .$$ Next
consider a table
\[%
\left [
\begin{array}{ccccc}%
u-b_1&u-b_1^p& \ldots&u-b_1^{p^j}&\ldots\\%
\vdots&\vdots&\vdots&\vdots&\vdots\\%
u-b_{|C_w|}&u-b_{|C_w|}^p& \ldots&u-b_{|C_w|}^{p^j}&\ldots\\%
\end{array}%
\right ]
\]%
where $b_i \in C_w$.  By an analogous argument, at least
$|C_w|-e - 2H(t)$  rows of this table contain no element with a zero at
any valuation of ${\mathcal E}$,  any $\pp_i$ or $\qq_i$.  Thus, at least $C_5-2e-4H(t) >C_4+2k$ elements are contained in
$C_w \cap C_u$.\\
\end{proof}
}

The next lemma is an elementary fact concerning valuations, and we state it without proof.
\begin{lemma}
\label{zerosandpoles}
For any non-constant element $z \in M$ and any constants $c\rq{} \not
=c$ we have that the zeros of $\displaystyle \frac{z-c\rq{}}{z-c}$ are
exactly the zeros of $z-c\rq{}$ and all the poles of $\displaystyle
\frac{z-c\rq{}}{z-c}$ are exactly the zeros of $z-c$.
\end{lemma}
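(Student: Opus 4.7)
The plan is a direct case analysis of $\ord_\pp$ at an arbitrary prime $\pp$ of $M$, exploiting the identity
\[
(z-c') - (z-c) = c - c',
\]
a nonzero element of the constant field $F$, hence of order $0$ at every prime of $M$.

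First, I would observe that the poles of $z-c'$ and the poles of $z-c$ coincide with the poles of $z$: if $\ord_\pp(z) < 0$, then both differences have order equal to $\ord_\pp(z)$ and contribute order $0$ to the quotient. So these primes are neither zeros nor poles of $\tfrac{z-c'}{z-c}$.

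Next, I would handle the primes at which $z$ is integral. By the ultrametric (strong triangle) inequality applied to $(z-c')-(z-c) = c-c'$, the two quantities $\ord_\pp(z-c')$ and $\ord_\pp(z-c)$ cannot both be strictly positive; otherwise the order of their difference would be strictly positive, contradicting $\ord_\pp(c-c')=0$. Therefore at most one of $z-c'$ and $z-c$ can vanish at $\pp$. If $\ord_\pp(z-c') > 0$, then $\ord_\pp(z-c) = 0$, so $\pp$ is a zero of $\tfrac{z-c'}{z-c}$ of the same multiplicity as its zero in $z-c'$; symmetrically, if $\ord_\pp(z-c) > 0$, then $\ord_\pp(z-c') = 0$ and $\pp$ is a pole of the quotient of multiplicity equal to the zero order of $z-c$.

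Combining these cases covers every prime of $M$ and yields exactly the desired identification of zeros and poles. The argument is entirely formal and uses nothing beyond the definition of a discrete valuation and the fact that constants have order zero, so there is no real obstacle; the lemma is flagged as elementary and is stated without proof in the paper.
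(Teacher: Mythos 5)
Your proof is correct, and since the paper states this lemma without proof (labeling it elementary), the natural case analysis by valuations that you give is the intended argument. The key observations — that poles of $z$ contribute order zero to the quotient, and that the ultrametric inequality applied to $(z-c')-(z-c)=c-c'$ forbids a common zero of $z-c'$ and $z-c$ — are exactly right and complete.
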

The next lemma provides a bound on the chosen element $t$ in terms of the genus  of the field.
\begin{lemma}
\label{heigtoft}
There exists an element $t \in M$ satisfying conditions of Notation and Assumptions \ref{notation: pthpowers} such that 
\[
H(t) \leq \max(1, 2g-1).
\]
\end{lemma}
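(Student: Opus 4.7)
The plan is to split by genus and build $t$ using Riemann--Roch together with the vector-space avoidance lemma.

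When $g=0$, algebraic closedness of $F$ implies $M=F(x)$ for some $x$ transcendental over $F$, and I would take $t=x$. Then $H(t)=1=\max(1,2g-1)$, the only zero and only pole of $t$ (at $(x)$ and at infinity) are both simple, and $t$ is manifestly not a $p$-th power since $H(t)=1$ is not divisible by $p$.

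When $g\geq 1$, fix $2g-1$ distinct degree-one primes $\mathfrak{Q}_1,\ldots,\mathfrak{Q}_{2g-1}$ of $M$ (all of degree one by algebraic closedness of $F$) and set $\mathfrak{B}=\mathfrak{Q}_1\cdots\mathfrak{Q}_{2g-1}$. Since $\deg\mathfrak{B}=2g-1>2g-2$, Lemma \ref{Formulae}(\ref{RR4}) gives $\ell(1/\mathfrak{B})=g$, whereas each of the $2g-1$ subspaces $L(1/(\mathfrak{B}/\mathfrak{Q}_i))$ has dimension at most $g-1$ provided the primes are chosen so that no $\mathfrak{B}/\mathfrak{Q}_i$ lies in the canonical class (a generic condition that can be arranged by initial choice of the $\mathfrak{Q}_i$). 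Lemma \ref{vs} then produces an element $t_0\in L(1/\mathfrak{B})$ lying outside all these subspaces; such a $t_0$ has pole divisor exactly $\mathfrak{B}$ and $H(t_0)=2g-1$.

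To arrange for the zeros of $t$ to also be simple, I would replace $t_0$ by $t:=t_0-c$ for a well-chosen $c\in F$. The pole divisor of $t$ coincides with that of $t_0$ and hence remains a product of distinct primes. The zero divisor of $t$ is the fiber of $t_0$ over $c$, which for all but finitely many $c\in F$ (those avoiding the critical values of $t_0$) consists of $2g-1$ distinct simple points. Since $F$ is infinite such a $c$ exists. The resulting $t$ has all zeros and poles simple (equivalently, none ramify in $M/F(t)$), $H(t)=2g-1$, and cannot be a $p$-th power because its pole divisor is a product of distinct primes.

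The main obstacle is that Riemann--Roch alone directly controls only the pole divisor; the translation trick $t_0\mapsto t_0-c$ is the standard device that transfers the simple-zeros requirement into avoidance of finitely many bad constants, which succeeds because $F$ is infinite. The argument also implicitly assumes enough room in $L(1/\mathfrak{B})$ for the two applications of Lemma \ref{vs} to leave non-constant elements in the complement, which is ensured for $g\geq 2$; the lowest genera can be handled by taking the degree of $\mathfrak{B}$ slightly larger if necessary and noting that the resulting bound still matches $\max(1,2g-1)$.
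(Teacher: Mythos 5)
Your proposal is correct in outline and reaches the same conclusion by a route closely related to the paper's. The paper's proof is shorter: it applies Lemma~\ref{Formulae}(\ref{RR4}) with $d=2g-1$ to obtain $x$ of bounded height, and then takes $t = \frac{x-c}{x-\tilde c}$ for constants $c,\tilde c$ chosen (by a counting argument in the spirit of Lemma~\ref{cor:C(w)}) so that the zeros of $x-c$ and $x-\tilde c$ avoid the finitely many ramified primes of $M/F(x)$. Since $F(t)=F(x)$, this single M\"obius transformation simultaneously makes all zeros and all poles of $t$ unramified, i.e.\ simple, and Lemma~\ref{zerosandpoles} preserves the height. You instead first force the pole divisor to be squarefree directly (via Lemma~\ref{vs} applied to $L(1/\mathfrak B)$ for $\mathfrak B$ a product of distinct primes) and then translate by a constant $c$ to dodge the critical values and make the zeros simple; both steps rest, as in the paper, on $F$ being infinite.

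Two points worth noting. First, the auxiliary hypothesis you impose — that no $\mathfrak B/\mathfrak Q_i$ lie in the canonical class, so that each $L(1/(\mathfrak B/\mathfrak Q_i))$ has dimension at most $g-1$ — is both hand-waved and unnecessary. You only need $H(t) \leq 2g-1$, not equality, so it suffices that $\ell(1/\mathfrak B) = g \geq 2$ gives a non-constant $t_0 \in L(1/\mathfrak B)$: its pole divisor divides the squarefree $\mathfrak B$ and hence is automatically squarefree, giving simple poles without invoking Lemma~\ref{vs} at all. Dropping the application of Lemma~\ref{vs} cleans up your argument. Second, both your argument and the paper's silently require $g \geq 2$ at this step, since for $g=1$ one gets $\ell(1/\mathfrak B)=1$ with $\deg\mathfrak B = 1$, so $L(1/\mathfrak B)$ contains only constants (indeed no non-constant function on a genus-$1$ curve has height $1$). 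As the paper's remark emphasizes, the precise numerical bound is immaterial — what matters is a bound depending only on the genus — so this does not affect the use made of the lemma, but the stated bound $\max(1,2g-1)$ should really read $\max(2,2g-1)$ (or similar) to cover $g=1$.
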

\begin{proof}
  If $g=0$, i.e.\ $M$ is a rational function field, the assertion of
  the lemma is clearly true.  So suppose $g >0$ and apply
  Lemma~\ref{Formulae}, Part (\ref{RR4}) with $d=2g-1$ to conclude
  that there is $x \in M$ whose height is $2g-1$.  By an argument
  similar to the one in Lemma \ref{cor:C(w)}, for any constant field
  large enough (and certainly for an infinite constant field) there
  exist constants $c, \tilde c$ such that $\displaystyle
  t=\frac{x-c}{x-\tilde c}$ does not have zeros or poles at primes
  ramifying in the extension $M/F(t)$.  Further, by Lemma
  \ref{zerosandpoles} we have that $H(t)=H(x-c)=H(x) \leq \max(1,
  2g-1)$.
\end{proof}
\begin{remark}
  While for the purposes of our arguments the bound on the height of
  $t$ is not important, since in the proofs below we only care about
  the fact that the height is fixed, it is useful to know that all the
  bounds in the paper are ultimately determined by the genus, which
  can serve as a measure of the Diophantine complexity of the field.
  Finally note that $k=[M:F(t)] =H(t)$ by Lemma \ref{FormulaOrd} and
  $i_G \leq k!$, so that all the constants occurring in the paper can
  be bound in terms of the genus of the field.
\end{remark}

The lemma below is perhaps the most important new technical part which allowed for the extension of earlier results.

\begin{lemma}%
\label{le:boundedheight}
Suppose $w, u,v \in M$ satisfy the following equations:
\begin{equation}%
\label{eq:1}
\left \{%
\begin{array}{c}%
w-t=v^{p^a} -v,\\%
\frac{1}{w} - \frac{1}{t}=u^{p^a} - u.%
\end{array}%
\right .%
\end{equation}%
If the divisor of $w$ is not a $p^a$-th power of another divisor, then $H(w) <C_3$.
\end{lemma}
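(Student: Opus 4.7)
The plan is to combine divisor-theoretic constraints coming from the Artin--Schreier shape of the two equations with a derivative-based height estimate. First I would observe that at any pole $\pp$ of $v$ one has $\ord_{\pp}(v^{p^a}-v) = p^a\ord_{\pp}(v)$, and similarly for $u$. Comparing with the two equations, any pole of $w$ whose prime does not divide $\Qq$ must have order divisible by $p^a$, and any zero of $w$ whose prime does not divide $\Pp$ must have order divisible by $p^a$. So I may factor
\[
\dd(w) = \YY\, \Bb^{p^a}, \qquad \nn(w) = \XX\, \Aa^{p^a}, \qquad \YY \mid \Qq,\ \ \XX \mid \Pp,
\]
with $\deg\YY,\deg\XX \leq C = H(t)$.

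Since $\DD(w)$ is not a $p^a$-th power, at least one of $\YY,\XX$ is nontrivial. The substitution $(w,t,v,u) \mapsto (1/w,1/t,u,v)$ interchanges the two equations and swaps the roles of $\YY$ and $\XX$, so I may assume $\YY \neq 1$. Picking $\ttt \mid \YY$ and applying Corollary~\ref{cor:usingRR} yields a decomposition
\[
w = \xi\, w_1^{p^a}, \qquad H(\xi) < (p^a+1)(C+g+1),
\]
with $w_1 = z_1/z_2$ supported disjointly from $\xi$; in particular $H(w) = H(\xi) + p^a H(w_1)$.

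The crux is to bound $H(w_1)$ via a derivative. Since $(y^{p^a})' = 0$ in characteristic $p$, differentiating $w - t = v^{p^a}-v$ with respect to $t$ gives $w' - 1 = -v'$, while differentiating $w = \xi w_1^{p^a}$ gives $w' = \xi' w_1^{p^a}$. Therefore
\[
\xi'\, w_1^{p^a} = 1 - v',
\]
and taking pole degrees gives $p^a H(w_1) \leq H(v') + H(\xi')$. By Lemmas~\ref{Ania} and~\ref{ramification}, each derivative satisfies a bound of the form $H(y') \leq 2 H(y) + 2g-2 + 2C$, where the ramification contribution is controlled via $\sum_\pp d_t(\pp)^{+} \leq 2g-2+2C$. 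Comparing poles in $v^{p^a}-v = w-t$ yields $p^a H(v) \leq H(w) + C$. Substituting these estimates into $p^a H(w_1) \leq H(v') + H(\xi')$ and combining with $H(w) = H(\xi) + p^a H(w_1)$ produces an inequality of the form
\[
\left(1 - \tfrac{2}{p^a}\right) H(w) \leq K(g, C, p^a).
\]
The choice $a = 1$ for $p > 2$ and $a = 2$ for $p = 2$ ensures $p^a \geq 3$, making the coefficient on the left positive; unwinding the constants yields the stated bound $H(w) < C_3$.

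The main obstacle is the delicate bookkeeping required to match the specific constant $C_3 = C + p^a C_1 C_2$ precisely, tracking derivative contributions at poles of $t$ (where $d_t$ is negative) separately from ramified primes (where $d_t > 0$). A subtler secondary point is the degenerate case $\xi' = 0$: when $p > 2$ (so $a=1$) this forces $w$ to be a $p$-th power, directly contradicting the hypothesis that $\DD(w)$ is not a $p^a$-th power and making the case vacuous, but in characteristic $2$ with $a = 2$ the condition $\xi' = 0$ only yields $w = \eta^2 w_1^4$, and one must argue separately, e.g.\ by examining the second equation or performing a partial descent, that this either forces a $4$-th power divisor or still admits a direct height bound.
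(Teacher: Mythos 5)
Your route is genuinely different from the paper's and worth noting. You differentiate $w-t=v^{p^a}-v$ with respect to $t$, combine $w'=\xi'w_1^{p^a}=1-v'$ with global height bounds for derivatives to get a self-improving inequality $(1-2/p^a)H(w)\leq K(g,C,p^a)$. The paper instead picks a single prime $\rr$ with $\ord_{\rr}\Bb=h>0$, rewrites the first equation as $\xi z_1^{p^a}-s^{p^a}=tz_2^{p^a}-sz_2^{p^a-1}$ with $s=vz_2$, and differentiates \emph{with respect to $\xi$} (which kills both $p^a$-th powers, leaving exactly $z_1^{p^a}$); comparing the resulting order $0$ at $\rr$ against the lower bound $h(p^a-1)-1-d_\xi(\rr)$ from Lemma~\ref{Ania} forces $d_\xi(\rr)\geq h(p^a-1)-1>0$, bounding both the number of primes in $\Bb$ and their multiplicities via Lemma~\ref{ramification}. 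Your approach is more global and arguably more transparent, at the cost of producing a different (not obviously matching) constant; that is cosmetic, since the choice of $C_3$ merely propagates through the definitions of $C_4$, $C_5$ and the size of $C(F)$.

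The substantive gap is your handling of $\xi'=0$. You dismiss it for $p>2$ by noting that $\xi\in M^p$ would make $w=(\eta w_1)^p$ a $p$-th power, contradicting the hypothesis; but this reasoning does not transfer to $p=2$, $a=2$, where $\xi\in M^2$ only gives $w=\eta^2w_1^4$, whose divisor is a square but need not be a fourth power, and you explicitly leave that case open. In fact the case never arises, and the paper's own observation gives the correct reason: Corollary~\ref{cor:usingRR} produces $\xi$ whose divisor is $\frac{\XX}{\YY}\cdot\frac{\DD^{p^a}\ttt^{p^a(\deg\Aa-\deg\Bb)}}{\CC^{p^a}}$ with $\XX,\YY,\CC,\DD$ pairwise coprime. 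The hypothesis that $\DD(w)$ is not a $p^a$-th power forces $\XX\neq1$ (equivalently, some prime has $\ord_\rr w=1$), and $\XX$ consists of multiplicity-one primes disjoint from all other factors, so $\ord_\rr\xi=1$ for some $\rr$. Thus $\xi\notin M^p$, hence $\xi'=d\xi/dt\neq0$, for all $p$. This is precisely the fact the paper invokes (``$\xi$ is not a $p$-th power in $M$ since at least one zero or pole of $\xi$ has an order not divisible by $p$'') to legitimize the global derivation with respect to $\xi$; you should invoke the same fact to rule out $\xi'=0$ once and for all, rather than leaving the characteristic-$2$ case to a vague ``partial descent.''
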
%
\begin{proof}%
  We assume that the divisor of $w$ is not a $p^a$-th power of another
  divisor in $M$ and obtain a bound on its height.  First of all note
  that all pole orders of $v^{p^a}-v$ and $u^{p^a} -u$ are 0 modulo
  $p^a$. Therefore, if for some prime $\rr$ we have that
  $\ord_{\rr}w \not =0$, then either $\ord_{\rr}w=\pm1$ or
  $\ord_{\rr}w \equiv 0 \mod p^a$.  Further, if $\ord_{\rr}w =-1$,
  then $\ord_{\rr}t=-1$ and $\ord_{\rr}v\geq0$.  Similarly, if
  $\ord_{\rr}w =1$, then $\ord_{\rr}t=1$ and $\ord_{\rr}u\geq0$.
  Given our assumption on the divisor of $w$, for at least one prime
  $\rr$ we have that $\ord_{\rr}w =1$ (implying that for at least one
  other prime the order is -1).  Thus
\[
(w)=\frac{\XX \Aa^{p^a}}{\YY\Bb^{p^a}},
\]
where $\XX, \YY, \Aa, \BB$ are pairwise relatively prime integral
divisors, multiplicity of all prime factors of $\XX$ and $\YY$ is one,
$\deg(\XX) <H(t) =C$, and $\deg(\YY) <H(t) =C$. Note that neither $\XX$ nor $\YY$ is a trivial divisor.  Further, without loss
of generality we can assume that $\deg(\XX) \leq \deg(\YY)$, and also
note that the pole divisor of $v$ is $\BB$ and of $u$ is $\Aa$. Now as
in Corollary \ref{cor:usingRR}, using the same notation, set
\[
w=\xi\frac{z_1^{p^a}}{z_2^{p^a}}, 
\]
where 
\[
H(\xi)\leq (p^a+1)(C+g+1),
\]
  rewrite rewrite the first equation of  \eqref{eq:1} as
\[%
\xi\frac{z_1^{p^a}}{z_2^{p^a}} -t = v^{p^a} -v,
\]%
\begin{equation}
\label{eq:2}
\xi z_1^{p^a}-tz_2^{p^a}=(vz_2)^{p^a}-(vz_2)z_2^{p^a-1},
\end{equation}
and set $s=vz_2$.  Note that if for some prime $\rr$ of $M$ we have that $\ord_{\rr}\Bb=h >0$, then 
\[
\ord_{\rr}s =0,
\]
\[
 \ord_{\rr}z_2 =h,
 \]
 \[
 \ord_{\rr}tz_2^{p^a} \geq hp^a-1\geq h(p^a-1),
 \]
 \[
  \ord_{\rr}\xi=0,
\]
\[
 \ord_{\rr}z_1=0.
\]
  We now rewrite \eqref{eq:2} to get
\[
\xi z_1^{p^a} -s^{p^a}=tz_2^{p^a} -sz_2^{p^a-1}.
\]
Observe that $\ord_{\rr}(tz_2^{p^a} -sz_2^{p^a-1}) \geq h(p^a-1) \geq 2h$,  and therefore $$\ord_{\rr}(\xi z_1^{p^a} -s^{p^a}) \geq h(p^a-1)\geq 2h.$$  

We now note that $\xi$ is not a $p$-th power in $M$ since at least one
zero or pole of $\xi$ has an order not divisible by $p$.  Thus the
global derivation with respect to $\xi$ is defined. (We are using our
assumption that the divisor of $w$ is not a $p^a$-th power in this
step.  Otherwise, $\XX$ and $\YY$ are trivial and $\xi$ is a constant,
and the derivation with respect to $\xi$ would not be defined.)  Taking
the derivative of $(\xi z_1^{p^a} -s^{p^a})$ with respect to $\xi$ we
see that it is equal to $z_1^{p^a}$ and thus $\ord_{\rr}(\xi z_1^{p^a}
-s^{p^a})\rq{} =0$.  At the same time we also have by Lemma
\ref{ramification}, that $\ord_{\rr}(\xi z_1^{p^a} -s^{p^a})\rq{} \geq
h(p^a-1) - 1 - d_{\xi}(\rr)$, implying that
\begin{equation}
\label{eq:hbound}
d_{\xi}(\rr) \geq h(p^a-1)-1>0.
\end{equation}
Thus $\rr$ belongs to a finite set of primes of size $$2g-2
+H(\xi) < 2g-2 + (p^a+1)(C+g+1)=C_1.$$ Using Lemma \ref{ramification}
again and \eqref{eq:hbound}, we can also obtain a bound on $h$:
 \[ 
 2g-1 + (p^a +1)(C+g+1) \geq d_{\xi}(\rr)+1 \geq h(p^a-1).
 \]
 Hence
 \[
 h <\frac{ 2g-1 + (p^a +1)(C+g+1) }{p^a-1}=C_2.
\]
Returning now to the structure of the divisor of $w$, we see that $$H(w) \leq \deg \XX + p^a\deg \BB \leq C +p^aC_1C_2.$$
\end{proof}  
The next lemma is a standard estimate of the height of the coefficients in a linear combination of basis elements in terms of the height of the linear combination itself.
\begin{lemma}
\label{height}
If $w \in M$ and $w=\sum_{i=1}^k A_i\omega_i$, where $A_i \in F(t)$, then $H(A_i) < k!k^kH_{\Omega}H(w)$.
\end{lemma}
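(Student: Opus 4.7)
The plan is to apply the $k$ embeddings $\sigma_1,\ldots,\sigma_k : M \to M^G$ over $F(t)$ to the relation $w = \sum_{i=1}^k A_i \omega_i$ and then solve for the $A_i$ by Cramer's rule. Since each $A_i \in F(t)$ is fixed pointwise by every $\sigma_j$, applying $\sigma_j$ yields the linear system
\[
\sigma_j(w) = \sum_{i=1}^k A_i \, \sigma_j(\omega_i), \qquad j = 1, \ldots, k.
\]
Because $\Omega$ is a basis of $M/F(t)$, the coefficient matrix $(\sigma_j(\omega_i))$ has non-zero determinant $\Delta \in F(t)$ (it is, up to sign, the square root of the discriminant of $\Omega$). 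Cramer's rule then gives $A_i = \Delta_i/\Delta$, where $\Delta_i$ is the determinant of the matrix obtained from $(\sigma_j(\omega_i))$ by replacing its $i$-th column with the vector $(\sigma_j(w))_{j=1}^k$.

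The remainder of the argument is height bookkeeping in $M^G$, using only the standard inequalities $H(x+y) \leq H(x) + H(y)$ and $H(xy) \leq H(x) + H(y)$, together with the fact that any automorphism of $M^G/F(t)$ permutes primes of $M^G$ and hence preserves $H_{M^G}$. Each of the $k!$ monomials in the Leibniz expansion of $\Delta$ is a product of $k$ Galois conjugates of basis elements, each of $M^G$-height at most $H_{\Omega}$, so $H_{M^G}(\Delta) \leq k \cdot k! \cdot H_{\Omega}$. Similarly every monomial of $\Delta_i$ is a product of $k-1$ basis conjugates and a single $\sigma_j(w)$, and since heights are Galois-invariant we have $H_{M^G}(\sigma_j(w)) = H_{M^G}(w) = i_G H(w)$; therefore $H_{M^G}(\Delta_i) \leq k! \bigl((k-1) H_{\Omega} + i_G H(w)\bigr)$.

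Finally, since $A_i \in F(t) \subseteq M$, the standard formula for heights in an extension gives $H_{M^G}(A_i) = i_G \, H(A_i)$, and combining $H_{M^G}(A_i) \leq H_{M^G}(\Delta_i) + H_{M^G}(\Delta)$ with the bounds above, then dividing by $i_G \geq 1$, yields
\[
H(A_i) \;\leq\; k! \, H(w) \;+\; \frac{(2k-1)\, k!}{i_G}\, H_{\Omega} \;\leq\; k! H(w) + (2k-1)\, k! \, H_{\Omega}.
\]
When $k \geq 2$ and $H(w), H_{\Omega} \geq 1$ this is comfortably below $k! k^k H_{\Omega} H(w)$; the degenerate cases ($k=1$, or $w$ a constant so $H(w)=0$ and $A_1 = w$, $A_i = 0$ for $i>1$) are immediate. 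There is no real obstacle — the proof is pure Cramer's rule together with crude height estimates — and the only point of care is to keep track of the factor $i_G = [M^G:M]$ when converting between $H$ and $H_{M^G}$, which the loose nature of the claimed bound easily absorbs.
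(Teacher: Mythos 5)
Your proposal is correct and follows essentially the same route as the paper: apply the $k$ embeddings $\sigma_j$ over $F(t)$ to get the linear system $\sigma_j(w)=\sum_i A_i\sigma_j(\omega_i)$, solve by Cramer's rule, bound the heights of the two determinants termwise using $H(xy)\leq H(x)+H(y)$ and $H(x+y)\leq H(x)+H(y)$, and convert from $H_{M^G}$ back to $H$ via $H_{M^G}(z)=i_G H(z)$. Your version is if anything more explicit than the paper's terse one-line estimate; the only point worth noting is that the final strict inequality is tight (indeed equality can occur in borderline cases such as $k=2$, $i_G=1$, $H(w)=H_{\Omega}=1$), but that looseness is already present in the paper's own argument and is harmless for the application in Proposition~\ref{prop:weak}.
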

\begin{proof}
  Consider a non-singular linear system $\sum_{i=1}^k
  A_i\sigma_j(\omega_i)=\sigma_j(w), j=1,\ldots,k$, where we consider 
  $A_1, \ldots, A_k$ as the unknowns.  Solving this system by
  Cramer\rq{}s Rule, and using the fact that the height of a
  sum/product is less or equal to the sum of heights, we can get an
  estimate on $H_{M^G}(A_i)$-- the height of $A_i$ in $M^G$.  More
  specifically, we get
\[
H_{M^G}(A_i)\leq 2k!k^k\max(H_{M^G}(\sigma_j(h), H_{\Omega}) \leq k!k^kH_{\Omega}H_{M^G}(w).
\]
Since for an element $z$ of $M$ we have that $H_{M^G}(z)=i_GH(z)$, we
cancel $i_G$ from both sides of the inequality and get the desired
result.
\end{proof}
The proposition below allows us to exploit fixed bounds on height.
Elsewhere, this proposition has been referred to as the Weak Vertical
Method.
\begin{proposition}[Slightly modified Theorem 10.1.1 of \cite{Sh34}]%
\label{prop:weak}
Suppose for some $w \in M$ with $H(w) < C_3$, for all primes $\mathfrak A$ of $\calV(w)$
 we have
\begin{equation}%
\label{eq:equiv}
w\equiv b(\mathfrak A) \mod \mathfrak A,
\end{equation}%
where $b(\mathfrak A) \in F$, and we interpret the equivalence as
saying that for any factor $\mathfrak c$ of $\mathfrak A$ in $M$ we
have that
\[
\ord_{\mathfrak c}(w-b(\mathfrak A)) \geq e(\mathfrak c/\mathfrak A),
\]
 the ramification degree of $\mathfrak c$ over
$\mathfrak A$. Then $w \in F(t)$.
\end{proposition}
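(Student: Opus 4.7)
The plan is to exploit the integral basis representation $w=\sum_{i=1}^k A_i\omega_i$ with $A_i\in F(t)$ and $\omega_1=1$, and show that the many congruences force each $A_i$ (for $i\geq 2$) to have many zeros as an element of $F(t)$, while Lemma~\ref{height} caps its height below the number of such forced zeros. Concretely, I would proceed in the following steps.

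First I would translate the congruences from $M$ down to $F(t)$. Fix $\mathfrak A\in\calV(w)$. By the defining properties of $\calV(w)$ the prime $\mathfrak A$ is unramified in $M/F(t)$, $w$ is integral at every $M$-factor of $\mathfrak A$, and $\{\omega_1,\dots,\omega_k\}$ is a local integral basis at $\mathfrak A$. Thus the images $\bar\omega_1,\dots,\bar\omega_k$ form a free basis of the $(\mathcal O_{F(t),\mathfrak A}/\mathfrak A)$-module $\mathcal O_{M,\mathfrak A}/\mathfrak A\mathcal O_{M,\mathfrak A}$, and being unramified means $\mathfrak A\mathcal O_{M,\mathfrak A}=\prod_{\mathfrak c\mid\mathfrak A}\mathfrak c$. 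The hypothesis that $\ord_{\mathfrak c}(w-b(\mathfrak A))\geq e(\mathfrak c/\mathfrak A)=1$ for every $\mathfrak c\mid\mathfrak A$ is therefore equivalent to $w-b(\mathfrak A)\equiv 0$ in $\mathcal O_{M,\mathfrak A}/\mathfrak A\mathcal O_{M,\mathfrak A}$. Expanding $w-b(\mathfrak A)=(A_1-b(\mathfrak A))+\sum_{i\geq 2}A_i\omega_i$ in the basis and using that $\omega_1=1$, this forces
\[
A_1\equiv b(\mathfrak A)\pmod{\mathfrak A}\quad\text{and}\quad A_i\equiv 0\pmod{\mathfrak A}\ \text{for}\ i\geq 2,
\]
as elements of $F(t)$.

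Next I would extract the contradiction. Running $\mathfrak A$ over $\calV(w)$ shows that each $A_i$ with $i\geq 2$ is divisible in $F(t)$ by every prime in $\calV(w)$; hence the degree of the $F(t)$-zero divisor of $A_i$ is at least $|\calV(w)|>C_4$, so $H(A_i)>C_4$ unless $A_i=0$. On the other hand, Lemma~\ref{height} together with $H(w)<C_3$ yields
\[
H(A_i)\leq k!\,k^k H_\Omega\,H(w)<k!\,k^k H_\Omega\,C_3=C_4.
\]
The two inequalities are incompatible unless $A_i=0$. Therefore $A_2=\dots=A_k=0$, which gives $w=A_1\in F(t)$ as required.

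The main delicate point — and the step I expect to require the most care — is the translation in the first paragraph: one must use all three conditions defining $\calV(w)$ (unramifiedness, integrality of $w$, and the local integral basis property) simultaneously to justify that the congruence $w\equiv b(\mathfrak A)\pmod{\mathfrak A}$ in $M$ really descends to coordinate-wise congruences for the $A_i$ in $F(t)$. Once that is cleanly in place, the rest is just the height comparison built into the definitions of $C_3$ and $C_4$.
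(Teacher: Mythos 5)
Your proposal is correct and follows essentially the same path as the paper's proof: write $w=\sum_i A_i\omega_i$ using the integral basis, use the three defining properties of $\calV(w)$ to descend the congruence to coordinatewise congruences $A_i\equiv 0\pmod{\mathfrak A}$ for $i\geq 2$, and then play the resulting lower bound $H(A_i)>C_4$ (when $A_i\neq 0$) off against Lemma~\ref{height} and $H(w)<C_3$. The only difference is cosmetic: you phrase the translation step via the freeness of $\mathcal O_{M,\mathfrak A}/\mathfrak A\mathcal O_{M,\mathfrak A}$ over the residue ring at $\mathfrak A$, whereas the paper divides $w-b(\mathfrak A)$ by a uniformizer $B(\mathfrak A)$ and compares coefficients; these are equivalent formulations of the same observation.
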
%
\begin{proof}%
  First of all we note that by Part \eqref{it:3} of the description
  of $\calV(w)$ in Notation and Assumption \ref{notation: pthpowers},
  any element $z \in M$ integral with respect to $\mathfrak A \in
  \calV(w)$, i.e.\ integral with respect to every factor of $\mathfrak
  A$ in $M$, can be written as
\[
z = \sum_{i=1}^k f_i\omega_i,
\]
 where for all $i =
1,\ldots,n$, we have that $f_i \in F(t)$ and $f_i$ is integral at
$\mathfrak A$.  Thus,  we now write
$w=\sum_{i=1}^kA_i\omega_i$, where $A_i \in F(t)$.  Observe that for
all $\mathfrak A \in \calV(w)$, we have that $w-b(\mathfrak A)$ is
equivalent to zero modulo every prime ${\mathfrak A}$ of $\calV(w)$.  At
the same time 
\[
w- b(\mathfrak A)= A_1 - b(\mathfrak A)+ A_2\omega_2 +\dots + A_k\omega_k.
\]
For each prime ${\mathfrak A}$ of $\calV(w)$, let $B(\mathfrak A) \in
F(t)$ be such that $\ord_{\mathfrak A}B(\mathfrak A) =1$.  (Such a
$B(\mathfrak A))$ exists by the Weak Approximation Theorem.)  Note
that $\displaystyle z=\frac{w-b(\mathfrak A)}{B(\mathfrak A)}$ is
integral at $\mathfrak A$ and thus $z = \sum_{i=1}^k f_i(\Aa)
\omega_i$, where $f_i(\Aa)$ are elements of $F(t)$ integral at
$\mathfrak A$.  Furthermore,
\[
A_1 - b(\mathfrak A)+ A_2\omega_2 + \dots + A_k\omega_k=w-b(\mathfrak
A) = B(\mathfrak A)z=\sum_{i=1}^k B(\mathfrak A)f_i(\Aa)\omega_i.
\]
Thus, for $i=2,\ldots,k$ for all ${\mathfrak A}$ in $\calV(w)$ we have
that $A_i = B(\mathfrak A)f_i(\Aa)$, implying that for $i=2,\ldots,k$,
for all ${\mathfrak A}$ in $\calV(w)$ we have that $\ord_{\mathfrak
  A}A_i >0$, implying that $H(A_i) >C_4$ or $A_i=0$.  The last
inequality contradicts our assumption on $H(w)$ and Lemma
\ref{height}. Therefore for $i=2,\ldots,k$ we have that $A_i=0$ and
thus $w \in F(t)$.
\end{proof}%
We now apply the Weak Vertical Method to our situation.
\begin{corollary}%
\label{finalpart1}%
Suppose that for some $w \in M$ with $H(w) < C_3$, and $C_4+e$
quadruples $(b, c,b\rq{},c\rq{}) \in F^4$ with $b\not=b\rq{}$, we have
a solution $u_b$ in $M$ to
\begin{equation}
\label{eq:getdown}%
\frac{w-c\rq{}}{w-c}-\frac{t-b\rq{}}{t-b}=u_b^{p^a}-u_b.
\end{equation}%
In this case,  $w \in F(t)$.  
\end{corollary}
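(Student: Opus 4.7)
The plan is to apply the Weak Vertical Method (Proposition~\ref{prop:weak}) to $w$. Since $H(w) < C_3$, it suffices to exhibit a set $\calV(w)$ of primes of $F(t)$ meeting the four requirements of Notation~\ref{notation: pthpowers} and, for each $\Aa \in \calV(w)$, a constant $b(\Aa) \in F$ such that $w \equiv b(\Aa) \bmod \Aa$ on every factor of $\Aa$ in $M$. To build $\calV(w)$, I would attach to each quadruple $(b,c,b',c')$ the prime $\Aa_b \subset F(t)$ cut out by $t-b$. Assuming the $b$'s are distinct, this yields $C_4 + e$ candidate primes; at most $e$ lie in the ramification locus $\calE$ and are discarded, and the remaining finitely many bad primes (poles of $w$ or primes dividing the basis discriminant) are absorbed into the slack of $C_4 = k!k^k H_\Omega C_3$ relative to $H(w)<C_3$, leaving more than $C_4$ usable primes.

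The core step is an order analysis. Fix a surviving quadruple and a factor $\mathfrak c \mid \Aa_b$ in $M$, so $e(\mathfrak c/\Aa_b)=1$ and $\ord_{\mathfrak c}\!\left(\frac{t-b'}{t-b}\right) = -1$ because $b \neq b'$. The right-hand side $u_b^{p^a} - u_b$ has order at $\mathfrak c$ either $\geq 0$ (when $u_b$ is integral there) or equal to $p^a \ord_{\mathfrak c}(u_b) \leq -p^a$ (when it is not), and in particular is never $-1$. Therefore, in the identity
\[
\frac{w - c'}{w - c} - \frac{t - b'}{t - b} = u_b^{p^a} - u_b,
\]
the quotient $\frac{w-c'}{w-c}$ must carry a pole at $\mathfrak c$. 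Since $w$ is integral at $\mathfrak c$, this pole can only come from a zero of $w-c$; moreover $c \neq c'$ precludes a simultaneous zero of $w - c'$. Hence $w \equiv c \bmod \mathfrak c$, with the same constant $c$ at every factor of $\Aa_b$, so setting $b(\Aa_b) := c$ legitimately puts $\Aa_b$ into $\calV(w)$.

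Once this is in place, Proposition~\ref{prop:weak} delivers $w \in F(t)$ directly. The main, and essentially only, obstacle is the bookkeeping: one must verify that after discarding the ramified, non-integral, and discriminant-divisor primes, more than $C_4$ primes survive, which is exactly why the constants in Notation~\ref{notation: pthpowers} are chosen with generous slack. The conceptual work, separating the $p^a$-adic behaviour of the right-hand side from the order-$1$ pole contributed by $\frac{t-b'}{t-b}$, has already been encapsulated in the observation that orders of $u_b^{p^a} - u_b$ at any prime avoid the value $-1$.
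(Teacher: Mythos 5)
Your argument is essentially the paper's: you apply the Weak Vertical Method (Proposition~\ref{prop:weak}), using the same pole-order analysis — the right-hand side $u_b^{p^a}-u_b$ can never have order $-1$ at an unramified factor $\pp_b$ of the zero of $t-b$, whereas $\frac{t-b'}{t-b}$ has order exactly $-1$ there, so $\frac{w-c'}{w-c}$ must carry a pole at $\pp_b$, which forces $\ord_{\pp_b}(w-c)>0$. One small correction to your bookkeeping: discarding factors of $t-b$ that are ``poles of $w$'' is unnecessary and in fact would wreck the count (after removing the at most $e$ ramified candidates you are left with exactly $C_4$, with no slack to spare); instead, the order analysis itself already shows $w$ is integral at every such $\pp_b$, because a pole of $w$ there would give $\ord_{\pp_b}\frac{w-c'}{w-c}=0$, making the left side have order $-1$, which the right side cannot attain.
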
%
\begin{proof}%
  Let $b \in F$ be such that it occurred as the first element in one
  of the quadruples above, so that the $F(t)$-prime $\mathfrak P_b$
  corresponding $t-b$ is unramified in the extension $M/F(t)$, and let
  $\pp_b$ be any factor of $\mathfrak P_b$ in $M$. (We have at least
  $C_4$ such elements $b$.)  Since $\pp_b$ is unramified,
  Lemma~\ref{zerosandpoles} implies that $\displaystyle
  \ord_{\pp_b}(t-b)=-\ord_{\pp_b}\frac{t-b\rq{}}{t-b}=1$.  At the same
  time, for any pole $\qq_b$ of $u_b$ in $M$ we have that
  $\displaystyle \ord_{\qq_b}(u_b^p-u_b) \equiv 0 \mod p^a$ as
  above. Thus, $c \not = c'$ and, by Lemma \ref{zerosandpoles} again,
  we have that $ \displaystyle
  -\ord_{\pp_b}\frac{w-c\rq{}}{w-c}=\ord_{\pp_b}(w-c) >0$.  In other
  words, for $C_4$ pairs $(b,c) \in F^2$ we have that $w \equiv c \mod
  \mathfrak P_b$, where $\mathfrak P_b$ is, as above, the zero divisor
  in $M$ and $F(t)$ of $t-b$. Now the assertion of the corollary
  follows from Proposition \ref{prop:weak}.
\end{proof}%

%
The next proposition gives equations that let us conclude that an element $w$ is a $p^s$-th
power of $t$ provided that we know that $w$ is in the rational
function field $F(t)$.
\begin{proposition}[\cite{Sh34}, Lemma 8.3.3, Corollary 8.3.4 and
  \cite{Eis}, Lemma 3.4] %
\label{prop:version1}  
 Suppose for some element $w \in F(t)$, having no poles or zeros at the
primes ramifying in the extension $M/F(t)$, there exist $u, v \in M$
such that the following system is satisfied:
\begin{equation}%
\label{eq:alreadydown0}%
\left \{ \begin{array}{c}%
\displaystyle \frac{1}{w}-\frac{1}{t}=u^{p^a}-u\\
w-t=v^{p^a}-v.%
\end{array} \right . %
\end{equation}%
Then for some $s \in \Z_{\geq 0}$ we have that $w=t^{p^{as}}$.
\end{proposition}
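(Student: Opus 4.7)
The plan is to analyze the divisor of $w$ in $F(t)$, force $w$ into the form $ct^n$ for some $c \in F^\times$ and $n \in \Z$, and then run a $p^a$-th power descent on $n$ using Frobenius in characteristic $p$.

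First I would exploit the fact that every pole of an element of the form $X^{p^a}-X$ has order exactly $p^a\ord(X)$, hence divisible by $p^a$. Applied to $w-t=v^{p^a}-v$ and $1/w-1/t=u^{p^a}-u$ at each $F(t)$-prime $\mathfrak P_\alpha=(t-\alpha)$ where $w$ has a zero or pole (such a prime being unramified in $M/F(t)$ by hypothesis), this gives the following dichotomy: for $\alpha\ne 0,\infty$ the order $\ord_{\mathfrak P_\alpha}(w)$ is divisible by $p^a$; at $\mathfrak P_0$ either $p^a\mid\ord_{\mathfrak P_0}(w)$ or $\ord_{\mathfrak P_0}(w)=1$ (the exception arising when the simple poles of $1/w$ and $1/t$ cancel); and similarly at $\infty$ either $p^a\mid\ord_\infty(w)$ or $\ord_\infty(w)=-1$. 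Therefore all zeros and poles of $w$ lie above $0$ and $\infty$, so $w=ct^n$ for some $c\in F^\times$ and $n\in\Z$. Examining the pole of $w-t$ at $\infty$ excludes $n\le 0$; if $n=1$ then $w-t=(c-1)t$ must have no pole of order one at $\infty$, forcing $c=1$ and $w=t=t^{p^{a\cdot 0}}$. Otherwise $n\ge 2$ and $p^a\mid n$; since $F$ is algebraically closed we may extract a $p^a$-th root of $c$ and write $w=w_1^{p^a}$ with $w_1=c^{1/p^a}t^{n/p^a}\in F(t)$.

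The descent step is now a clean exercise in Frobenius additivity. From $w_1^{p^a}-t=v^{p^a}-v$ one obtains $w_1^{p^a}-v^{p^a}=t-v$ in $M$, hence $(w_1-v)^{p^a}=t-v$. Setting $\beta=v-w_1\in M$ gives $\beta^{p^a}=\pm(t-v)$ and, after substituting back into $w_1=v-\beta$, the Artin--Schreier-type identity $w_1-t=\beta^{p^a}-\beta$ (the sign $\pm$ is absorbed uniformly for both $p$ odd and $p=2$ by the substitution, using that $(-1)^{p^a}=-1$ for odd $p$ and that $+1=-1$ in characteristic $2$). An entirely parallel calculation with $\gamma=u-1/w_1\in M$ produces $1/w_1-1/t=\gamma^{p^a}-\gamma$. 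Since $w_1$ has zeros and poles only at $0$ and $\infty$, which are unramified in $M/F(t)$ by the standing assumption on $t$ in Notation and Assumptions~\ref{notation: pthpowers}, the full hypotheses of the proposition are preserved for $w_1$, and $H(w_1)=n/p^a<n=H(w)$.

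An induction on $H(w)$ now completes the proof: the base case $H(w)=1$ was handled above and forces $w=t$; for $H(w)\ge 2$ the descent combined with the inductive hypothesis applied to $w_1$ gives $w_1=t^{p^{as'}}$, whence $w=w_1^{p^a}=t^{p^{a(s'+1)}}$. The most delicate part of the plan is the divisor bookkeeping at the two special places $0$ and $\infty$, where one must verify that the two exceptional cases $\ord_{\mathfrak P_0}(w)=1$ and $\ord_\infty(w)=-1$ can occur only together and precisely when $w=t$ (ruling out, via the global degree identity, hybrid configurations mixing the exceptions with $p^a$-divisible contributions elsewhere); once this bookkeeping is in place, the Frobenius-splitting descent and induction go through mechanically.
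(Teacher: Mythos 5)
Your divisor analysis establishes the right dichotomy, but the inference ``Therefore all zeros and poles of $w$ lie above $0$ and $\infty$, so $w=ct^n$'' is a non sequitur: the fact that $\ord_{\mathfrak{P}_\alpha}(w)$ is divisible by $p^a$ for $\alpha\neq 0,\infty$ does not make those orders vanish. What you can actually conclude is that the divisor of $w$ has the form $\Uu^{p^a}\Pp^{b_1}\Qq^{b_2}$ with $\Uu$ supported away from $\Pp\Qq$, $b_1\in\{1\}\cup p^a\Z$, and $b_2\in\{-1\}\cup p^a\Z$; the degree-zero constraint then forces $|b_1|=|b_2|=1$ or $b_1\equiv b_2\equiv 0 \bmod p^a$. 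In the first case $w/t$ is a $p^a$-th power in $F(t)$, but $\Uu$ may well be nontrivial: a nontrivial degree-zero $\Uu$ together with $b_1=1$, $b_2=-1$ still gives a degree-zero divisor, so your invocation of ``the global degree identity'' to rule out this hybrid configuration is simply wrong.

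Handling this case is the heart of the proof and needs an idea your sketch is missing, namely a derivative argument. The paper writes $w-t=t(f-1)^{p^a}=v^{p^a}-v$ (using $w/t=f^{p^a}$ and Frobenius additivity), expresses $f-1=f_1/f_2$ with $f_1,f_2$ coprime polynomials in $t$, shows that the poles of $v$ are exactly the zeros of $f_2$, clears denominators to $-tf_1^{p^a}+s^{p^a}=sf_2^{p^a-1}$ with $s=f_2v$, and differentiates with respect to $t$: the derivative of the left side is $-f_1^{p^a}$, and since at any zero $\cc$ of $f_2$ one has $\ord_{\cc}(s^{p^a}-tf_1^{p^a})\geq 2$, Corollary~\ref{cornoram} forces $\ord_{\cc}f_1^{p^a}>0$, contradicting coprimality. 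Hence $f_2$ is constant, $v$ is constant, and $w=t$. Without this argument you cannot conclude $w=t$ in the first case, and your induction on $H(w)$ has no valid base.

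Your Frobenius descent for the second case is correct and essentially the same as the paper's substitution $\tilde v=v-\tilde w$, $\tilde u=u-\tilde w^{-1}$, and the induction on height would then close the proof---but only once the gap above is repaired.
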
%
\comment{
\begin{proof}%
  First of all note that all the poles of $v^{p^a}-v$ and $u^{p^a}-u$
  in $M$ are of orders divisible by $p^a$.  Since zeros and poles of
  $t$ are of orders equal to $\pm 1$ (see Notation \ref{notation:
    pthpowers}), we must conclude from \eqref{eq:alreadydown0} that
  the divisor of $w$ in $M$ is of the form
  $\Uu^{p^a}\Pp^{b_1}\Qq^{b_2}$. Indeed, let $\ttt$ be a prime which
  is not a factor of $\mathfrak P$ or $\mathfrak Q$. Without loss of
  generality assume $\ttt$ is a pole of $w$. Then, since
  $\ord_{\mte{t}}t = 0$,
\[%
0 > \ord_{\mte{t}}w = \ord_{\mte{t}}(w-t) = \ord_{\mte{t}}(v^{p^a}-v)
\equiv 0 \mbox{ modulo }p^a.%
 \]%
 Now let $\qq$ be a factor of $\mathfrak Q$ and consider the order at
 $\qq$. We have%
\[%
\ord_{\mte{q}}(w-t) =  \ord_{\mte{q}}(v^{p^a}-v).%
\]%
Therefore, either $\ord_{\mte{q}}w < -1 $ and $\ord_{\mte{q}}w \equiv
0 \mod p^a$, or $\ord_{\mte{q}}w=\ord_{\mte{q}}t=-1$.  Since $w \in
F(t)$ and no factor of $\mathfrak Q$ is ramified in the extension
$M/F(t)$, the order of $w$ at all the factors of $\mathfrak Q$ is the
same.  Thus, either $b_2=-1$ or $b_2 \equiv 0 \mod p^a$.  Similarly,
for any prime factor $\pp$ of $\mathfrak P$ either $\ord_{\mte{p}}w >
1 $ and $\ord_{\mte{p}}w \equiv 0 \mod p^a$, or $\ord_{\mte{p}}w
=\ord_{\mte{p}}t=1$.  Thus, $b_1=1$ or $b_1 \equiv 0 \mod p^a$.  Now
we will examine the divisor of $w$ in $F(t)$, where $\Pp$
and $\Qq$ are primes of $F(t)$.  Since, by assumption $w$ does not
have any poles or zeros at primes ramifying in the extension $M/F(t)$,
for any $F(t)$-prime $\mathfrak t \not \in \{\Pp, \Qq\}$ we must have
$\ord_{\mathfrak t}w\equiv 0 \mod p^a$.  Thus in $F(t)$ the divisor of
$w$ is of the form $\mathfrak D^{p^a}\Pp^{b_1}\Qq^{b_2}$.  Since the
degree of this divisor must be 0, we deduce that either
$|b_1|=|b_2|=1$ or $b_1 \equiv b_2 \equiv 0 \mod p^a$.  We will
consider the two cases separately.

If we first assume that $|b_1|=|b_2|=1$, then we can deduce that
$\ord_{\pp}u \geq 0$ for any factor of $\Pp, \ord_{\qq}v\geq 0$ for
any factor $\qq$ of $\Qq$, and also $wt^{-1}$ is a $p^a$-th power in
$F(t)$.  The first assertion follows from the fact that
$\ord_{\qq}t=\ord_{\qq}w=-1$ and therefore $\ord_{\qq}(w-t) \geq -1$
but cannot be equal to $-1$.  The second assertion can be derived in a
similar fashion by looking at $\frac{1}{w} - \frac{1}{t}$.  The third
assertion follows from the fact that in $F(t)$ every zero degree
divisor is principal and the constant field is perfect.  Thus, the
second equation of \eqref{eq:alreadydown0} can be rewritten as
\begin{equation}
\label{eq:003}
t(f-1)^{p^a} = v^{p^a} - v,
\end{equation}
where $f \in F(t)$.  Since $f-1$ is a rational function in $t$, we can
further rewrite (\ref{eq:003}) as%
\begin{equation}%
\label{eq:without}%
t(f_1^{p^a}/f_2^{p^a}) = v^{p^a} - v,%
 \end{equation}%
 where $f_1,f_2$ are relatively prime polynomials in $t$ over
 $F$. From this equation it is clear that any valuation that is a pole
 of $v$, is either a pole of $t$ or a zero of $f_2$.  But by the
 discussion above, $v$ does not have any poles at factors of the pole
 divisor of $t$.  So all the poles of $v$ are zeros of $f_2$.
 Further, the absolute value of the order of any pole of $v$ at any
 valuation which is a zero of $f_2$, must be the same as the order of
 $f_2$ at this valuation. Therefore, $s = f_2v$ will have poles only
 at the valuations which are poles of $t$.  (These poles are from
 $f_2$.) We can rewrite (\ref{eq:without}) in the form
\[%
-tf_1^{p^a} + s^{p^a} = sf_2^{p^a-1}.%
\]%
Let $\cc$ be a zero of $f_2$. Then, since $f_2$ is a polynomial in
$t$, we have that $\cc$ is not a pole of $t$. Since $p^a-1 \geq 2$ and
$s$ is integral over $F[t]$, we have that $\mbox{ord}_{\cc}(s^{p^a} -
tf_1^{p^a}) \geq 2$.  Now since $t$ is not a $p$-th power, as above,
the global derivation with respect to $t$ is defined and
\[%
d( -tf_1^{p^a} + s^{p^a}) /dt = -f_1^{p^a}.
\]%
Since, by assumption, $f_2$ does not have any zeros at valuations
ramifying in the extension $M/F(t)$,  Corollary~\ref{cornoram} implies
\[%
\mbox{ord}_{\mbox{\te c}}(-f_1^{p^a})= \mbox{ord}_{\mbox{\te c}}(d(
-tf_1^{p^a} + s^{p^a}) /dt) >0.%
\]%
Thus, $f_1$ has a zero at $\cc$. But $f_1$ and $f_2$ are supposed to
be relatively prime polynomials. Hence, $f_2$ does not have any zeros,
and therefore $v$ has no poles making it a constant.  Similarly, $u$
is a constant, and thus $w=t+c_1$ and $\frac{1}{w}=\frac{1}{t} +c_2$,
where $c_1, c_2 \in M$.  These two equalities can hold if and only if
both constants are zero.

Next consider the case of $b_1 \equiv b_2 \equiv 0 \mod p^a$. In this
case, since $w$ does not have zeroes or poles ramifying in the
extension $M/F(t)$, the divisor of $w$ in $F(t)$ is a $p^a$-th power
of another divisor, and as above this implies that $w = \tilde
w^{p^a}$ for some $\tilde w \in F(t)$.  So  if $w \not
=t$, we have that $w = \tilde w^{p^a}$ for some $\tilde w \in F(t)$.
Assuming $w \not = t$ we do the following.  Set
$\tilde{v}=v-\tilde{w}, \tilde{u}=u-\tilde{w}^{-1}$ and observe that
the following equations hold.
 \[%
\tilde{w} - t =(v - \tilde{w})^{p^a} - (v- \tilde{w})=\tilde v^{p^a}-\tilde{v}
\]%
\[%
 \tilde{w}^{-1} - t^{-1} = (u-\tilde{w}^{-1})^{p^a} - (u-\tilde{w}^{-1}) =\tilde u^{p^a} - \tilde{u}%
 \]%

 Since we can repeat this process only finitely many times, we conclude
 that $w=t^{p^{as}}$ for some $s \in \Z_{\geq0}$.

\end{proof}%
}
In general we do not know if $w$ has all of its poles and zeros at
primes not ramifying in the extension $M/F(t)$.  Therefore we might
have to replace $w$ by $\displaystyle \frac{w-b}{w-b'}$, where $b, b'
\in F_0$. (Recall that $F_0$ is the algebraic closure of $\F_p$ in $F$.)  The proposition below carries out this construction.

\begin{proposition}%
\label{prop:below}%
 Let $w \in F(t)$, assume the system \eqref{eq:alreadydown0} holds, and for all $i\not = j \in \{1,\ldots, C_5\}$, for some $b \in V_i, b\rq{}\in V_j$ there exist
$u_{i,j,b,b\rq{}}, v_{i,j,b,b\rq{}} \in M$ such that
\begin{equation}%
\label{eq:alreadydown1}%
\left \{ \begin{array}{c}%
\displaystyle \frac{w-b}{w-b\rq{}}-\frac{t-c_i}{t-c_j}=u_{i,j,b,b\rq{}}^{p^a}-u_{i,j,b,b\rq{}}\\
\displaystyle \frac{w-b\rq{}}{w-b}-\frac{t-c_j}{t-c_i}  =v_{i,j,b,b\rq{}}^{p^a}-v_{i,j,b,b\rq{}}%
\end{array} \right . %
\end{equation}%

In this case for some $s \in \Z_{\geq 0}$ we have that $w=t^{p^{as}}$.

\end{proposition}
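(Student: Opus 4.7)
The plan is to reduce to Proposition~\ref{prop:version1} by a Möbius substitution $(w,t) \mapsto (W,T)$ chosen to dispose of the ramification obstruction, and then to extract the precise form $w = t^{p^{as}}$ using the multiplicative independence built into $C(F)$. The first step is to locate, among the many quadruples $(i, j, b, b')$ provided by the hypothesis, one satisfying: (a) the $F(t)$-primes corresponding to $c_i$ and $c_j$ are unramified in $M/F(t)$; and (b) $b$ and $b'$ differ from the residue $w(\ttt) \in F$ at every $F(t)$-prime $\ttt$ ramifying in $M/F(t)$. Both are achievable because $|C(F)| = C_5$ far exceeds the number $e$ of ramified primes, and because the orbits $V_k$ are pairwise disjoint by the defining property of $C(F)$, so they avoid the finite set of ramified residues of $w$ for all but boundedly many indices.

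With such $(i, j, b, b')$ in hand, set $W := (w-b)/(w-b')$ and $T := (t-c_i)/(t-c_j)$. Then $T$ has simple unramified zeros and poles at $c_i, c_j$; $W$ has no zeros or poles at any prime of $M$ ramifying in $M/F(T) = M/F(t)$; and the Artin--Schreier relations $W - T = u^{p^a} - u$ and $W^{-1} - T^{-1} = v^{p^a} - v$ hold by the hypothesis of Proposition~\ref{prop:below}. Hence all the hypotheses of Proposition~\ref{prop:version1} are met with $(W, T)$ in place of $(w, t)$, yielding $W = T^{p^{ar}}$ for some $r \geq 0$.

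I would then unpack $W = T^{p^{ar}}$ using the characteristic-$p$ identity $(t-c)^{p^{ar}} = t^{p^{ar}} - c^{p^{ar}}$: solving the resulting rational identity in $t$ expresses $w$ as a linear function of $t^{p^{ar}}$ over $F$, say $w = \alpha t^{p^{ar}} + \beta$, with $b$ and $b'$ pinned down in terms of $\alpha, \beta, c_i^{p^{ar}}, c_j^{p^{ar}}$. Repeating the argument for another admissible quadruple $(i', j', b_1, b_1')$ yields analogous relations with the same $\alpha, \beta, r$ (since $w$ is unchanged), and the combined constraint that the corresponding $b$-values must lie in the pairwise disjoint orbits $V_k$ admits only $\alpha = 1$ and $\beta = 0$. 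Hence $w = t^{p^{ar}}$, so we take $s = r$. The main difficulty is this final rigidity step, which depends essentially on the multiplicative-independence condition imposed in the definition of $C(F)$.
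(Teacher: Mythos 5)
Your first step (locating an admissible pair $(i,j)$ with $c_i,c_j$ unramified and $b,b'$ avoiding bad residues, then applying Proposition~\ref{prop:version1} to $W=(w-b)/(w-b')$ and $T=(t-c_i)/(t-c_j)$ to get $W=T^{p^{ar}}$) is the same as the paper's first step, and that part is fine. The gap is in how you extract $\alpha=1$, $\beta=0$ from $w=\alpha t^{p^{ar}}+\beta$.

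You propose to pin down $(\alpha,\beta)$ by feeding in a second admissible quadruple and invoking ``rigidity'' from the disjointness/multiplicative-independence of the orbits $V_k$, but you never write down what the combined constraint actually is, and it does not obviously force $\alpha=1,\beta=0$. Unpacking, from a single quadruple one only learns $b=\beta+\alpha c_i^{p^{ar}}\in V_i$ and $b'=\beta+\alpha c_j^{p^{ar}}\in V_j$; a second quadruple contributes two more such containment constraints. That is, for several indices $i$ one has $\alpha c_i^{p^{ar}}+\beta=c_i^{p^{k_i}}$ with the Frobenius exponents $k_i$ unknown and potentially different. Because the $k_i$ can vary, each constraint is a nontrivial polynomial relation of a priori unbounded degree, and the multiplicative-independence property of $C(F)$ (which is about products $c_i^{n_i}\neq c_j^{n_j}$, not directly about these affine relations) does not by itself turn two or three such containments into $\alpha=1,\beta=0$. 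This is exactly where the paper uses the hypothesis you leave idle: the system \eqref{eq:alreadydown0} forces $t$ and $w$ to have a common (simple, unramified) zero, and reducing $W=T^{p^{as}}$ modulo that prime yields, already from one quadruple, the scalar identity $b/b'=c_i^{p^{as}}/c_j^{p^{as}}$. Writing $b=c_i^{p^m}$, $b'=c_j^{p^r}$, multiplicative independence then immediately forces $m=r=as$, hence $b=c_i^{p^{as}}$, $b'=c_j^{p^{as}}$, and therefore $\alpha=1$, $\beta=0$. So the key idea you are missing is the reduction at the common zero supplied by \eqref{eq:alreadydown0}; without it the proposed rigidity step is not established.
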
%
\begin{proof}%
  First of all, by Lemma \ref{cor:C(w)} we
  conclude that for some $c_i, c_j$ and $b \in V_i, c \in V_j$, we
  have that $t-c_i, t-c_j, w-b, w-c$ do not have zeros at any prime
  ramifying in the $M/F(t)$ and therefore
  $\displaystyle \frac{t-c_i}{t-c_j}, \frac{w-b}{w-b'} \in F(t)$ do not
  have zeros or poles at any primes ramifying in the extension
  $M/F(t)$.   It follows that all the zeros and poles of  $\displaystyle \frac{t-c_i}{t-c_j}$ are simple, since they are simple in $F(t)$.  Now applying Proposition \ref{prop:version1} we conclude
  that 
\begin{equation}
\label{eq:pr}
\frac{w-b}{w-b'} = \left(  \frac{t-c_i}{t-c_j}\right)^{p^{as}}
\end{equation}
for some $s \geq 0$.    From \eqref{eq:pr} we deduce
\begin{equation}
\label{eq:middle}
1 + \frac{b'-b}{w-b'}=1+ \frac{c^{p^{as}}_j-c^{p^{as}}_i}{t^{p^{as}}-c_j^{p^{as}}}.
\end{equation}
Since we know from the second equation of \eqref{eq:alreadydown0} that
$t$ and $w$ have a common zero, considering the equation above modulo
this prime gives us
\[
\frac{b'-b}{b'}= \frac{c^{p^{as}}_j-c^{p^{as}}_i}{c_j^{p^{as}}},
\]
or
\[
\frac{b}{b'}= \frac{c^{p^{as}}_i}{c_j^{p^{as}}}.
\]
Thus, from \eqref{eq:middle}, for some $r \in \Z_{\geq 0}$ we have that
\[
w=b'+ \frac{b'-b}{c^{p^{as}}_j-c^{p^{as}}_i}(t^{p^{as}}-c_j^{p^{as}})= b' +\frac{b'}{c^{p^{as}}_j}(t^{p^{as}}-c_j^{p^{as}})=\frac{b'}{c^{p^{as}}_j}t^{p^{as}}=\frac{c_j^{p^r}}{c^{p^{as}}_j}t^{p^{as}}=c_j^{p^r-p^{as}}t^{p^{as}}.
\]
In a similar fashion we conclude that for some $m \in \Z_{\geq 0}$ we have that $w=c_i^{p^m-p^{as}}t^{p^{as}}$ and hence $c_j^{p^r-p^{as}}  = c_i^{p^m-p^{as}}$.  By assumption on elements of $C(F)$ we must have $p^r-p^{as}=p^m-p^{as}=0$ and $w=t^{p^{as}}$.
\end{proof}%

We will now prepare for the case when we cannot conclude right away
that $w$ is of bounded height and use the Weak Vertical Method to see
that it is in the fixed rational subfield.  In this case by Lemma
\ref{le:boundedheight}, the divisor of $w$ is a $p^{as}$-th power of
another divisor.  The three lemmas below take advantage of this fact
to conclude that under certain conditions $w$ is a $p^a$-th power of
another field element. The proofs for all three lemmas can be found in \cite{Sh34}.


\begin{lemma}[\cite{Sh34}, Lemma 8.2.4]
\label{le:coefficients}%
Let $M/G$ be a finite separable extension of fields of positive
characteristic $p$. Let $\alpha \in M$ be such that for some positive
integer $a$, all the coefficients of its monic irreducible polynomial
over $G$ are $p^a$-th powers in $G$. Then $\alpha$ is a $p^a$-th power
in $M$.
\end{lemma}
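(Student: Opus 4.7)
The plan is to build a ``$p^a$-th root'' of the minimal polynomial of $\alpha$ and use the resulting factorization to exhibit $\alpha$ as a $p^a$-th power in $M$. Write the monic irreducible polynomial of $\alpha$ over $G$ as
\[
f(X)=X^n+\sum_{i=0}^{n-1}g_iX^i,\qquad g_i=h_i^{p^a}\text{ with }h_i\in G,
\]
and set
\[
h(X)=X^n+\sum_{i=0}^{n-1}h_iX^i\in G[X].
\]
Using the Frobenius identity $(u+v)^{p^a}=u^{p^a}+v^{p^a}$ in characteristic $p$, one checks that $h(X)^{p^a}=f(X^{p^a})$.

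Next I would compare factorizations over the algebraic closure $\bar G$. Since $f$ is irreducible and $M/G$ is separable, the roots $\alpha_1,\dots,\alpha_n$ of $f$ are distinct. Fix $\alpha_1=\alpha$, and for each $j$ pick $\beta_j\in\bar G$ with $\beta_j^{p^a}=\alpha_j$; because the $\alpha_j$ are distinct, so are the $\beta_j$. Then
\[
h(X)^{p^a}=f(X^{p^a})=\prod_{j=1}^n(X^{p^a}-\alpha_j)=\prod_{j=1}^n(X-\beta_j)^{p^a},
\]
so (taking $p^a$-th roots in the UFD $\bar G[X]$) we obtain $h(X)=\prod_{j=1}^n(X-\beta_j)$. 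In particular, $h$ is \emph{separable} over $G$.

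Finally I would pin down the degree of $\beta_1$. The minimal polynomial of $\beta_1$ over $G$ divides $h$, so $[G(\beta_1):G]\le n$. On the other hand $\alpha=\beta_1^{p^a}\in G(\beta_1)$, whence $[G(\beta_1):G]\ge[G(\alpha):G]=n$. Thus $h$ itself is the minimal polynomial of $\beta_1$, $G(\beta_1)=G(\alpha)\subseteq M$, and $\alpha=\beta_1^{p^a}$ with $\beta_1\in M$, as required. The one place demanding care is the transition from a root of $h$ to $\beta_1$ in particular: this is exactly where separability of $M/G$ is used, ensuring the $\beta_j$ are distinct so that the identification of $h$ as the minimal polynomial of the \emph{specific} $\beta_1$ with $\beta_1^{p^a}=\alpha$ is forced by the degree count rather than by an abstract isomorphism.
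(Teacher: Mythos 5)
Your proof is correct and is essentially the paper's argument: both form the polynomial $h(X)$ whose coefficients are $p^a$-th roots of those of $f$, observe that a $p^a$-th root $\beta$ of $\alpha$ satisfies $h$, and then use the two inequalities $[G(\beta):G]\le n$ and $G(\alpha)\subseteq G(\beta)$ to force $G(\beta)=G(\alpha)\subseteq M$. One small remark: the detour through factoring $f$ and $h$ over $\bar G$ and the appeal to separability are not actually needed — it suffices to note $h(\beta)^{p^a}=f(\beta^{p^a})=f(\alpha)=0$ hence $h(\beta)=0$, and the degree count then goes through without any assumption on distinctness of the roots.
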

\comment{
\begin{proof}
  Let $a_0^{p^a} + \dots + a_{m-1}^{p^a}T^{m-1} + T^{m}$ be the monic
  irreducible polynomial of $\alpha$ over $G$.  Let $\beta$ be the
  element of the algebraic closure of $M$ such that $\beta^{p^a} =
  \alpha$. Then $\beta$ is of degree at most $m$ over $G$. On the
  other hand, $G(\alpha) \subseteq G(\beta)$. Therefore, $G(\alpha) =
  G(\beta)$.%
\end{proof}
}
\begin{lemma}[\cite{Sh34}, Lemma 8.2.5]
\label{lemma:norms}%
Let $M/G$ be a finite separable extension of fields of positive
characteristic $p$. Let $[M:G] =n$. Let $r$ be a positive integer.
Let $x \in M$ be such that $M= G(x)$ and for distinct $b_0,\ldots,b_n
\in G$ we have that {\bf N}$_{M/G}(b_i^{p^r}-x) = y_i^{p^r}$. Then $x$
is a $p^r$-th power in $M$.
\end{lemma}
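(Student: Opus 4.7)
The plan is to reduce the statement to Lemma \ref{le:coefficients} by showing that every coefficient of the minimal polynomial of $x$ over $G$ is a $p^r$-th power in $G$. Since $M = G(x)$ and $[M:G] = n$, the minimal polynomial $f(T) \in G[T]$ of $x$ has degree $n$, and for any $b \in G$ one has $\mathbf{N}_{M/G}(b - x) = f(b)$. The hypothesis therefore translates to $f(b_i^{p^r}) = y_i^{p^r}$ for the $n+1$ distinct elements $b_0,\ldots,b_n \in G$. Note that each $y_i$ lies in $G$: since $M/G$ is separable, no element outside $G$ can have a $p^r$-th power inside $G$.

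The key device is Lagrange interpolation. Because $b_0,\dots,b_n$ are $n+1$ distinct elements of $G$, there is a unique polynomial $P(S) \in G[S]$ of degree at most $n$ with $P(b_i) = y_i$ for all $i$. Raising to the $p^r$-th power and using the freshman's dream in characteristic $p$, the polynomial $P(S)^{p^r}$ has only monomials of degree divisible by $p^r$, and $P(b_i)^{p^r} = y_i^{p^r} = f(b_i^{p^r})$. At the same time, the substitution polynomial $F(S) := f(S^{p^r}) \in G[S]$ obviously also has only monomials of degree divisible by $p^r$.

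Introducing the new indeterminate $U = S^{p^r}$, write $F(S) = \tilde F(U)$ and $P(S)^{p^r} = \tilde G(U)$; both $\tilde F$ and $\tilde G$ have degree at most $n$ in $U$, and they agree at the $n+1$ distinct values $U = b_i^{p^r}$ (distinct because Frobenius is injective on $G$). Hence $\tilde F = \tilde G$ as polynomials in $U$, which is exactly the coefficient-wise identity
\[
a_j = c_j^{p^r} \qquad (j = 0, 1, \ldots, n),
\]
where $P(S) = \sum_j c_j S^j$ and $f(T) = \sum_j a_j T^j$. Every coefficient of $f$ is thus a $p^r$-th power in $G$, and Lemma \ref{le:coefficients} concludes that $x$ is a $p^r$-th power in $M$.

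There is no real conceptual obstacle here: the only insight required is noticing that the hypothesis is tailor-made for Lagrange interpolation, since $n+1$ distinct values of $f(b_i^{p^r})$ are being promised to be $p^r$-th powers. After that the argument is a matter of degree bookkeeping and the observation that both $f(S^{p^r})$ and $P(S)^{p^r}$ lie in $G[S^{p^r}]$.
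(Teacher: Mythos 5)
Your argument is correct and essentially the same as the paper's: both identify $\mathbf{N}_{M/G}(b_i^{p^r}-x)$ with $f(b_i^{p^r})$ for $f$ the minimal polynomial, extract from the resulting Vandermonde-type system that the coefficients of $f$ are $p^r$-th powers in $G$, and then invoke Lemma~\ref{le:coefficients}. The paper inverts that system via Cramer's rule (noting that every determinant involved has $p^r$-th-power entries and hence is itself a $p^r$-th power), whereas you repackage the same linear algebra as Lagrange interpolation followed by a degree count; this is a cosmetic, if marginally cleaner, variant.
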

\comment{
\begin{proof}
  Let $U(T) = A_0 + A_1T + \dots +A_{n-1}T^{n-1} + T^n$ be the monic
  irreducible polynomial of $x$ over $G$. Then for $i = 0, \ldots,n$
  we have that $U(b_i^{p^r}) = y_i^{p^r}$. Further, we have the
  following linear system of equations:
\[%
 \left (%
  \begin{array}{ccccc}%
 1&b_0^{p^r}&\ldots&b_0^{p^r(n-1)}&b_0^{p^rn}\\ \vdots&\vdots&\vdots&\vdots&\vdots\\%
1&b_n^{p^r}&\ldots&b_n^{p^r(n-1)}&b_n^{p^rn}\\ \end{array} \right ) \left ( \begin{array}{c} A_0\\ \vdots\\ 1\\%
\end{array}
\right ) = %
\left (%
\begin{array}{c}%
y_0^{p^r}\\ \vdots\\ y_n^{p^r}\\
\end{array}%
\right )%
\]%
Using Cramer's rule to solve the system, it is not hard to conclude
that for $i = 0,\ldots,n$ it is the case that $A_i$ is a $p^r$-th
power in $G$. Then, by Lemma \ref{le:coefficients}, $x$ is a $p^r$-th
power in $M$.
\end{proof}
}
\begin{lemma}[\cite{Sh34}, Lemma 8.4.1]
\label{lemma:divpth}%
Let $M$ be a function field over a perfect field of constants $L$ and
let $t \in M$ be such that $M/L(t)$ is a finite and separable
extension of degree $n$. Let $m$ be a positive integer. Let $v \in M$ and assume
that for some distinct $b_0 = 0, b_1,\ldots,b_n \in L$, the divisor of
$v + b_0, \ldots, v + b_n$ is a $p^m$-th power of some other divisor
of $M$. Then, assuming for all $i$ we have that $v + b_i$ does not
have any zeros or poles at any prime ramifying in the extension
$M/L(t)$, it is the case that $v$ is a $p^m$-th power in $M$.
\end{lemma}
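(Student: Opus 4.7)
The plan is to reduce to an application of Lemma~\ref{lemma:norms} on the subfield $N:=L(t)(v)$ of $M$, with $-v$ in the role of $x$. Since $L$ is perfect I can write each $b_i = c_i^{p^m}$ with $c_0=0$ and $c_1,\dots,c_n\in L$ still pairwise distinct. Then $v+b_i = c_i^{p^m}-(-v)$, so what I need to supply is that $\mathbf{N}_{N/L(t)}\bigl(c_i^{p^m}-(-v)\bigr) = y_i^{p^m}$ for some $y_i\in L(t)$ and each $i=0,\dots,n$.

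To produce these $y_i$, I first push the $p^m$-th-power divisor property from $M$ down to $N$. Let $\pp$ be a prime of $N$ with $\ord_{\pp}(v+b_i)\neq 0$, pick any prime $\PP$ of $M$ above $\pp$, and let $\qq$ be the prime of $L(t)$ below $\pp$. Then $\ord_{\PP}(v+b_i) = e(\PP/\pp)\,\ord_{\pp}(v+b_i)\neq 0$, so $\PP$ is a zero or pole of $v+b_i$ and hence unramified over $L(t)$ by hypothesis. The tower relation $e(\PP/\qq) = e(\PP/\pp)\,e(\pp/\qq)$ then forces $e(\PP/\pp)=1$, so $\ord_{\pp}(v+b_i) = \ord_{\PP}(v+b_i)$ is divisible by $p^m$. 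Thus the divisor of $v+b_i$ in $N$ is a $p^m$-th power. Applying the norm-order formula $\ord_{\qq}(\mathbf{N}_{N/L(t)}(v+b_i)) = \sum_{\pp\mid\qq} f(\pp/\qq)\,\ord_{\pp}(v+b_i)$, the divisor of $\mathbf{N}_{N/L(t)}(v+b_i)$ in $L(t)$ is also a $p^m$-th power. Finally, since $L$ is perfect and $L(t)$ is rational, every element of $L(t)$ with $p^m$-th power divisor is itself a $p^m$-th power: write such an element as $\alpha\prod_j q_j(t)^{e_j}$ with $\alpha\in L^{\ast}$ and the $q_j$ monic irreducible, observe $p^m\mid e_j$ for each $j$, and use $\alpha=\beta^{p^m}$ coming from perfectness of $L$. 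This gives the desired $y_i$.

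With the norm condition in hand, I apply Lemma~\ref{lemma:norms} to $N/L(t)$---separable as a subextension of the separable $M/L(t)$---with $x=-v$ and $r=m$. Setting $n':=[N:L(t)]\le n$, I have the required $n'+1$ distinct elements $c_i\in L(t)$ among the $n+1$ available, and $N=L(t)(-v)$, so the lemma yields that $-v$ is a $p^m$-th power in $N$, hence in $M$. Since $L$ is perfect, $-1=(-1)^{p^m}$ in $L$ (trivially for $p=2$, and because $p^m$ is odd for $p>2$), so $v$ itself is a $p^m$-th power in $M$, as desired.

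The main technical point is the ramification-transfer step in the second paragraph: the hypothesis that $v+b_i$ avoids primes ramifying in $M/L(t)$ is exactly what keeps the $p^m$-divisibility intact when pushing the divisor of $v+b_i$ from $M$ down to $N$. Without this hypothesis, a factor $e(\PP/\pp)>1$ could destroy divisibility by $p^m$ in $N$, and then the $L(t)$-norm would no longer be a $p^m$-th power, so Lemma~\ref{lemma:norms} could not be invoked.
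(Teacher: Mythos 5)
Your proof is correct and follows essentially the same route as the paper's: push the $p^m$-th-power divisor condition from $M$ down to $L(t)(v)$ using the hypothesis that no zero or pole of $v+b_i$ ramifies in $M/L(t)$, conclude that $\mathbf{N}_{L(t,v)/L(t)}(v+b_i)$ has $p^m$-th-power divisor and hence (by rationality of $L(t)$ and perfectness of $L$) is itself a $p^m$-th power, and then invoke Lemma~\ref{lemma:norms}. The only differences are cosmetic: you fold the $v\in L(t)$ case into the degenerate $[N:L(t)]=1$ instance of Lemma~\ref{lemma:norms} rather than treating it separately, and you spell out the substitution $b_i=c_i^{p^m}$ (using perfectness of $L$) and the sign bookkeeping $-v\mapsto v$ that the paper leaves implicit.
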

\comment{
\begin{proof}%
  First assume $v \in L(t)$. Since $v + b_i$ does not have any zeros
  or poles at primes ramifying in the extension $M/L(t)$, the divisor
  of $v + b_i$ in $L(t)$ is a $p^m$-th power of another $L(t)$
  divisor. Since in $L(t)$ every zero degree divisor is principal and
  the constant field is perfect, $v$ is a $p^m$-th power in $L(t)$ and
  therefore in $M$. Next assume $v \not \in L(t)$. Note that no zero
  or pole of $v+ b_i$ is at any valuation ramifying in the extension
  $M/L(t,v)$. Hence, in $L(t,v)$ the divisor of $v+b_i$ is also a
  $p^m$-th power of another divisor. Finally note that {\bf
    N}$_{L(t,v)/L(t)}(v + b_i)$ will be a $p^m$-th power in $L(t)$ and
  apply Lemma \ref{lemma:norms}.\\%
\end{proof}
}
We are now ready to put all the parts together.

\begin{proposition}
\label{theend}
Suppose for some $w \in M$ we have that \eqref{eq:alreadydown0} and
\eqref{eq:alreadydown1} hold with all the variables taking values in
$M$.  In this case $w=t^{p^{as}}$ for some positive integer $s$.

\end{proposition}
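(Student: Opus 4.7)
The plan is to extract a dichotomy on $w$ from Lemma \ref{le:boundedheight} and then to dispatch each branch either by the Weak Vertical Method or by a $p^a$-th root extraction followed by a descent.

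I first apply Lemma \ref{le:boundedheight} to the pair of equations \eqref{eq:alreadydown0}; this splits the argument into two cases, depending on whether $H(w) < C_3$ or the divisor of $w$ in $M$ is a $p^a$-th power of another divisor. In the bounded-height case, Lemma \ref{cor:C(w)} shows that the hypotheses in \eqref{eq:alreadydown1} supply at least $C_4 + e$ admissible quadruples $(b, c_i, b', c_j)$ with $b \in V_i$, $b' \in V_j$, $i \neq j$, and with $t - c_i$ and $t - c_j$ unramified in $M/F(t)$. These are exactly the equations demanded by Corollary \ref{finalpart1}, which then forces $w \in F(t)$; since \eqref{eq:alreadydown0} and \eqref{eq:alreadydown1} continue to hold, Proposition \ref{prop:below} yields $w = t^{p^{as}}$ for some $s \geq 0$.

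In the remaining case, I will re-apply Lemma \ref{le:boundedheight} to each subsystem of \eqref{eq:alreadydown1}, viewed as an instance of \eqref{eq:1} with $(w,t)$ replaced by $((w-b)/(w-b'), (t-c_i)/(t-c_j))$. Observing that the pole divisors of $w - b$ and $w - b'$ coincide and cancel gives $H((w-b)/(w-b')) = H(w)$, so any auxiliary instance that produces bounded height puts us back in the first case. Otherwise the divisor of $(w-b)/(w-b')$ is a $p^a$-th power for every admissible $(i,j,b,b')$, and combining this with the hypothesis that the pole divisor of $w$, hence of each $w - b$, is a $p^a$-th power shows that the full divisor of $w - b$ is a $p^a$-th power for at least $[M:F(t)] + 1$ distinct values of $b \in \{0\} \cup \bigcup_i V_i$ avoiding the ramified primes. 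Lemma \ref{lemma:divpth} then yields $w = w_1^{p^a}$ for some $w_1 \in M$.

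To close the argument, I will set up a descent. Because $F_0$ is perfect, every $b \in V_i$ is a $p^a$-th power $\tilde b^{p^a}$ in $F_0$, and the characteristic-$p$ identity that $X^{p^a} - Y = Z^{p^a} - Z$ is equivalent to $Y - X = \tilde Z^{p^a} - \tilde Z$ for a suitable $\tilde Z$ lets me translate \eqref{eq:alreadydown0} and \eqref{eq:alreadydown1} into analogous systems for $w_1$, with $c_i, c_j$ replaced by their $p^a$-th roots in $F_0$. Since $H(w_1) = H(w)/p^a$, iterating the whole procedure a bounded number of times will produce an $m \geq 0$ and $w_m \in M$ with $w = w_m^{p^{am}}$ and $H(w_m) < C_3$; the bounded-height argument applied to $w_m$ then gives $w_m = t^{p^{as'}}$, so $w = t^{p^{a(s' + m)}}$. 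The main obstacle I anticipate is the bookkeeping of the descent, namely verifying at each iteration that the substituted constants still populate an admissible $C(F)$-type set and that the non-ramification hypotheses required by Corollary \ref{finalpart1}, Proposition \ref{prop:below} and Lemma \ref{lemma:divpth} are preserved; this will rely on perfectness of $F_0$ and on the generous lower bound built into $C_5$ in Notation and Assumptions \ref{notation: pthpowers}.
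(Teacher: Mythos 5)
Your proof is correct, and it follows the same two-case architecture as the paper (bounded height leading via the Weak Vertical Method to $w\in F(t)$; otherwise extract a $p^a$-th root and descend), but the internal logic of both cases is organized differently in a way worth noting. You obtain your initial dichotomy by applying Lemma~\ref{le:boundedheight} directly to \eqref{eq:alreadydown0}, which gives you for free the fact that $\mathfrak D(w)$ is a $p^a$-th power in the second case; the paper never applies that lemma to \eqref{eq:alreadydown0}, but instead applies it to the $(i,j)$-indexed systems \eqref{eq:alreadydown1}, splitting on whether the divisor of $\frac{w-b}{w-b'}$ is a $p^a$-th power for \emph{one} versus \emph{all} unramified pairs $(c_i,c_j)$. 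Consequently, in the second case the paper cannot speak about $\mathfrak D(w)$ directly and works instead with the linear fractional transformation $\frac{1}{b'-b}+\frac{1}{w-b'}$, which has the same divisor as $\frac{w-b}{w-b'}$ up to a constant factor, and feeds the resulting translates $d_{i,j}+\frac{1}{w_j}$ into Lemma~\ref{lemma:divpth} to conclude $w_j=w-b'$ (and hence $w$) is a $p^a$-th power. You bypass the M\"obius trick entirely: once you know $\mathfrak D(w)$ is a $p^a$-th power, the coprimality of $\mathfrak n(w-b)$ and $\mathfrak n(w-b')$ together with the $p^a$-th power divisor of $\frac{w-b}{w-b'}$ forces each $\mathfrak n(w-b)$ to be a $p^a$-th power, and since $\mathfrak d(w-b)=\mathfrak d(w)$ already is one, you can apply Lemma~\ref{lemma:divpth} to translates of $w$ itself. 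This is a small but genuine simplification of the second case, purchased by the (harmless) extra application of Lemma~\ref{le:boundedheight} up front. Two minor points of imprecision to tidy up: the ``characteristic-$p$ identity'' you invoke for the descent should be stated concretely as $\tilde v=v-w_1$ (so that $\tilde v^{p^a}-\tilde v=(v^{p^a}-v)-(w_1^{p^a}-w_1)=w_1-t$), and in the descent you should remark explicitly that each $V_i$ is a finite Frobenius orbit, so it is closed under taking $p^a$-th roots, which is what guarantees $\tilde b=b^{1/p^a}\in V_i$ and preserves the form of the equations \eqref{eq:alreadydown1} after one descent step.
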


\begin{proof}
Suppose \eqref{eq:alreadydown0}  and \eqref{eq:alreadydown1}  hold.  We need to consider two cases:  

{\it Case 1:} For one pair $c_i, c_j$ with $t-c_i$ and $t-c_j$
corresponding to primes that are not ramified (over $F(t)$), we have
that the divisor of $\displaystyle \frac{w-b}{w-b'}$ is not a
$p^a$-th power of another divisor in $M$.
In this case applying Lemma \ref{le:boundedheight} we conclude that
$\displaystyle  H\left (\frac{w-b}{w-b'}\right )=H(w) < C_3$.  (The
equality of heights follows from Lemma \ref{zerosandpoles}.)  Now by
Corollary \ref{finalpart1} we conclude that $w \in F(t)$.  
Applying Proposition \ref{prop:below}, we finally conclude that $w=t^{p^{as}}$ for some $s \in \Z_{\geq 0}$.\\

{\it Case 2:} For all values of $c_i \not =c_j$ such that the
$F(t)$-primes corresponding to $t-c_i$ and to $t-c_j$ are not
ramified, we have that that the divisors of $\displaystyle
\frac{w-b'}{w-b}$ are $p^a$-th powers of other divisors. (We remind
the reader that $b \in V_i, b' \in V_j$.)  In this case, the divisor
of $\displaystyle 1+\frac{b-b'}{w-b}$ is a $p^a$-th power of another
divisor and therefore the divisor of
\[
\frac{1}{b-b'}+\frac{1}{w-b'}:=d_{i,j} +\frac{1}{w_j}
\]
is a $p^a$-th power of another divisor for all such $i\not =j$. This
follows since
$\displaystyle \frac{w-b'}{w-b}$ and $d_{i,j} +\frac{1}{w_j}$ differ
by a constant factor only, and therefore have the same divisor in $M$.
By Lemma \ref{cor:C(w)} we have that $|C_t \cap C_w| > 2k$.  Thus, to
summarize the discussion above, for a fixed value of $j$ with $c_j \in
C_t \cap C_w$ and at least $k+1$ values of $i \not =j$ with $c_i \in C_t
\cap C_w$, we have that $\displaystyle d_{i,j} + \frac{1}{w_j}$ does
not have a pole or a zero at a prime ramified over $F(t)$. Also, for any
pair $i_1 \not = i_2$ we have $d_{i_1,j} \not = d_{i_2,j}$, and the
divisor of each $\displaystyle d_{i,j} + \frac{1}{w_j}$ is a $p^a$-th
power of another divisor.  Hence by Lemma \ref{lemma:divpth} we
conclude that $w_j$ for this $j$ is a $p^a$-th power in $M$, and thus
$w$ is a $p^a$-th power in $M$.

At this point we can take the ``$p^a$-th root\rq{}\rq{} of our
equations as was done in Proposition \ref{prop:below} and again ask,
this time for the ``new'' $w$, whether the divisor of $\displaystyle
\frac{w-b'}{w-b}$ is not a $p^a$-th power of another divisor for some
$i, j$ with $c_i, c_j \in C_t \cap C_w$.

Since our ``$p^a$-th root descent\rq{}\rq{} cannot go on indefinitely,
at some step we will conclude that the divisor of $\displaystyle
\frac{w-b'}{w-b}$ is not a $p^a$-th power of another divisor for some
$i, j$ with $c_i, c_j \in C_t$.  When this happens, we will follow the
{\it Case 1} argument to reach the desired conclusion.

\end{proof}
The results in Sections~\ref{simple}, \ref{subsec:cansatisfy} and
\ref{all} are only slight modifications of known results going back
in some form to \cite{Ph5}.  We
include these results and some of the proofs for the convenience of
the reader.
\subsection{Defining $p$-th powers of elements with simple zeros and poles}
\label{simple}
  In this section we need additional notation listed below.
\begin{notationassumption}
$\left.\right.$
\begin{itemize}
\item For $s \in \Z_{\geq 0}, i, l \in \{1,\ldots,C_5\}, j_i \in \{1,\ldots, r_i\}, j_l \in \{1,\ldots, r_l\}$,  $z = -1,1$,
$m = 0,1$, $u, v, \mu_{i,j_i,l,j_l,z,m}$, $\lambda_1, \lambda_{-1},\sigma_{i,j_i,l,j_l}   \in M$, let
\[%
D(s,i,j_i,l,z,m, j_l,u, v, \mu_{i,j_i,l,j_l,z,m}, \sigma_{i,j_i,l,j_l}, \lambda_1, \lambda_{-1}) %
\]%
be the following system of equations:
\begin{equation}
\label{eq:duf2.1}
u_{i,k} = \frac{u + c_i}{u + c_l},
\end{equation}
\begin{equation}
\label{eq:2.2}
v_{i,j_i,l,j_l} = \frac{v+d_{i,j_i}}{v + d_{l,j_l}},
\end{equation}
\begin{equation}
\label{eq:2.3}
v_{i,j_i,l,j_l}^{2z}t^{mp^{as}} - u_{i,l}^{2z}t^m = \mu_{i,j_i,l,j_l,z,m}^{p^{as}} - \mu_{i,j_i,l,j_l,z,m},
\end{equation}
\begin{equation}
\label{eq:2.4}
v_{i,j_i,l,j_l} - u_{i,k} = \sigma_{i,j_i,l,j_l}^{p^a} - \sigma_{i,j_i,l,j_l},%
\end{equation}%
\begin{equation}
\label{eq:2.5}
v - u =\lambda_1^{p^a} - \lambda_1
\end{equation}
\begin{equation}
\label{eq:2.5.1}
v^{-1} - u^{-1}=\lambda_{-1}^{p^a} - \lambda_{-1}
\end{equation}

\item Let $j,r,s \in \Z_{\geq 0}$, $u,\tilde{u},v, \tilde{v}, x, y \in
  M$.  Let $E(u,\tilde{u},v, \tilde{v}, x, y, j,r,s)$ denote the
  following system of equations
\begin{equation}%
\label{eq:final1}%
v=u^{p^r}%
\end{equation}%
\begin{equation}%
\label{eq:final2}%
\tilde{v}=\tilde{u}^{p^j}%
\end{equation}%
\begin{equation}%
\label{eq:final3}%
u=\frac{x^p+t}{x^p-t}%
\end{equation}
\begin{equation}%
\label{eq:final4}%
\tilde{u}=\frac{x^p+t^{-1}}{x^p-t^{-1}}%
\end{equation}
\begin{equation}%
\label{eq:final5}%
v=\frac{y^p+t^{p^s}}{y^p-t^{p^s}}%
\end{equation}%
\begin{equation}%
\label{eq:final6}%
\tilde{v}=\frac{y^p+t^{-p^s}}{y^p-t^{-p^s}}%
\end{equation}%
\item Let $j,r,s \in \Z_{\geq 0}$, $u,\tilde{u},v, \tilde{v}, x, y \in
  M$, and let $E2(u,\tilde{u},v, \tilde{v}, x, y, j,r,s)$ denote the
  following system of equations
\begin{equation}%
\label{eq:final12}%
v=u^{2^r}%
\end{equation}%
\begin{equation}%
\label{eq:final22}%
\tilde{v}=\tilde{u}^{2^j}%
\end{equation}%
\begin{equation}%
\label{eq:final32}%
u=\frac{x^2+t^2+t}{x^2+t}%
\end{equation}
\begin{equation}%
\label{eq:final42}%
\tilde{u}=\frac{x^2+t^{-2}+t^{-1}}{x^2+t^{-1}}%
\end{equation}
\begin{equation}%
\label{eq:final52}%
v=\frac{y^2+t^{2^{s+1}}+ t^{2^s}}{y^2+t^{2^s}}%
\end{equation}%
\begin{equation}%
\label{eq:final62}%
\tilde{v}=\frac{y^2+t^{-2^{s+1}}+t^{-2^s}}{y^2+t^{-2^s}}%
\end{equation}%
 \end{itemize}%
\end{notationassumption}

 We start with a way to produce elements with simple zeros and poles.
 
\begin{lemma}[\cite{Sh13}, Lemma 4.5 or \cite{Sh34}, Lemma 8.4.2] %
\label{le:simplezero}%
 Let $p >2$. Let $x \in M$. Let $\displaystyle u = \frac{x^p +  t}{x^p - t}$. Let $b \in F, b \not = \pm1$. Then all zeros and
poles of $u^{\pm1}+b$ are simple except possibly for zeros or poles of $t$ or
at primes ramifying in the extension $M/F(t)$.
\end{lemma}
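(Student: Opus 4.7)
The plan is to combine the global derivation with respect to $t$ with Corollary~\ref{cornoram}, exploiting the fact that in characteristic $p$ we have $d(x^p)/dt = 0$. A direct computation then yields the key identities
\[
u' = \frac{2x^p}{(x^p - t)^2}, \qquad (u^{-1})' = \frac{-2x^p}{(x^p + t)^2},
\]
and since $b$ is a constant, $(u+b)' = u'$ and $(u^{-1}+b)' = (u^{-1})'$. The factor of $2$ in the numerators is exactly where the hypothesis $p > 2$ will enter.

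Fix a prime $\rr$ of $M$ that is unramified over $F(t)$ and is neither a zero nor a pole of $t$. I would first handle the case where $\rr$ is a zero of $u+b$. From the identity $(u+b)(x^p - t) = (1+b)x^p + (1-b)t$, the fact that $u$ is finite at $\rr$ (equal to $-b$) rules out $\ord_{\rr}(x) < 0$, so $x$ is integral at $\rr$ and one gets $(1+b)\bar{x}^p + (1-b)\bar{t} = 0$ in the residue field. Because $b \neq \pm 1$ both coefficients are nonzero, and because $\bar{t} \neq 0$ the residue $\bar{x}^p$ is a nonzero element, forcing $\ord_{\rr}(x) = 0$. A short computation then gives $\overline{x^p - t} = -2\bar{t}/(1+b) \neq 0$, hence $\ord_{\rr}(x^p - t) = 0$. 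The derivative formula now forces $\ord_{\rr}((u+b)') = 0$, and since $u+b$ is integral at $\rr$, Corollary~\ref{cornoram} yields
\[
\ord_{\rr}(u+b) - 1 \leq \ord_{\rr}((u+b)') = 0,
\]
so the zero is simple.

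A fully symmetric computation using $(u^{-1})'$ in place of $u'$ handles zeros of $u^{-1}+b$ with $b \neq \pm 1$. For the poles, I would observe that a pole of $u+b$ coincides with a pole of $u$, which is a zero of $u^{-1} = u^{-1} + 0$; since $0 \neq \pm 1$, the preceding zero argument applied to $u^{-1} + 0$ produces a simple pole of $u+b$. Likewise, poles of $u^{-1}+b$ are zeros of $u = u + 0$ and are simple by the same reasoning. Thus all four sub-cases reduce to the single residue-field-plus-derivative computation above. The hypothesis $b \neq \pm 1$ is needed precisely to prevent the coefficients $1 \pm b$ from vanishing in the residue equation (which is what allows the conclusion $\ord_{\rr}(x) = 0$), and $p > 2$ is needed to keep the factor of $2$ from dying; beyond keeping track of the four symmetric cases, I do not anticipate any serious obstacle.
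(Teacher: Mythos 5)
Your proposal is correct and takes essentially the same route as the paper: both compute $(u+b)' = 2x^p/(x^p-t)^2$ and $(u^{-1})' = -2x^p/(x^p+t)^2$ via the global derivation in $t$, and both invoke Corollary~\ref{cornoram} to convert a multiple zero into a contradiction, with $b \neq \pm 1$ guaranteeing the relevant residues are nonzero and $p > 2$ keeping the factor of $2$ alive. The only cosmetic difference is that you pin down $\ord_{\rr}((u+b)') = 0$ by an explicit residue computation, whereas the paper argues by cases on where $(u+b)'$ could vanish and shows each case is incompatible with $\rr$ being a zero of $u+b$; both reduce poles to zeros of the reciprocal in the same way.
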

\begin{proof}%
  It is enough to show that the proposition holds for $u$.  The
  argument for $u^{-1}$ follows by symmetry. First of all we remind the reader that 
  the global derivation with respect to $t$ is defined over $M$, and
  the derivative follows the usual rules. So consider
\[%
\frac{d(u+b)}{dt} = \frac{2x^p}{(x^p-t)^2}.%
\]%
If $\ttt$ is a prime of $M$ such that $\ttt$ does not ramify in the
extension $M/F(t)$ and is not a pole or zero of $t$, then 
Corollary \ref{cornoram} implies that
\[%
\ord_{\mte{t}}(u+b)=\ord_{\ttt}\frac{(1+b)x^p+(1-b)t}{x^p-t} > 1%
\]%
if and only if $\ttt$ is a common zero of $u+b$ and
$\displaystyle \frac{d(u+b)}{dt}$. If $\displaystyle \ord_{\ttt}\frac{2x^p}{(x^p-t)^2} >0$, then
$\ttt$ is either a zero of $x$ or a pole of $x^p-t$. Any zero of $x$,
which is not a zero of $t$, is not a zero of $u+b$ for $b \not
=1$. Furthermore, any pole of $x$ is also not a zero of $u+b$. Thus all
zeros of $u+b$ at primes not ramifying in the extension $M/F(t)$
and different from poles and zeros of $t$ are simple. Next we note that poles
of $u+b$ are zeros of $u^{-1}$.  Further
\[%
\frac{du^{-1}}{dt}=  \frac{-2x^p}{(x^p+t)^2},%
\]%
and by a similar argument, $u^{-1}$ and $\frac{du^{-1}}{dt}$ do not
have any common zeros at any primes not ramifying in the extension
$M/F(t)$ and not being  poles or  zeros of $t$.
\end{proof}
The following lemma (which we state without a proof) deals with the case of $p=2$.  
\begin{lemma}[\cite{Eis}, Lemma 3.8]%
\label{le:simplezero2}%
Let $p=2$ and $x \in M$. Let $\displaystyle u = \frac{x^2 +t^2+ t}{x^2 + t}$. Let $b \in F, b
\not =1$. In this case all zeros and poles of $u+b$ are simple except possibly
for zeros or poles of $t$ or at primes ramifying in the extension
$M/F(t)$.
\end{lemma}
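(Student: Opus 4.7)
My plan is to mimic the argument from Lemma~\ref{le:simplezero}, exploiting Corollary~\ref{cornoram} together with the fact that in characteristic $2$ the derivative of a square vanishes. Throughout, let $\ttt$ be a prime of $M$ which is neither a zero nor a pole of $t$ and which does not ramify in $M/F(t)$.

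First I would rewrite
\[
u+b=\frac{(1+b)x^{2}+t^{2}+(1+b)t}{x^{2}+t},
\]
noting that $1+b\neq 0$ since $b\neq 1$ in characteristic $2$, and then compute the global $t$-derivative using that $d(x^{2})/dt=2x\cdot dx/dt=0$:
\[
\frac{d(u+b)}{dt}=\frac{t^{2}}{(x^{2}+t)^{2}}.
\]

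Next I would handle zeros of $u+b$ at $\ttt$. Suppose $\ord_{\ttt}(u+b)\geq 2$. Since $u+b$ is integral at $\ttt$, Corollary~\ref{cornoram} forces $\ord_{\ttt}\bigl(d(u+b)/dt\bigr)\geq 1$. Because $\ttt$ is not a zero of $t$, the displayed formula for the derivative then forces $\ttt$ to be a pole of $x^{2}+t$, which in turn means $\ttt$ is a pole of $x$, say of order $-n<0$. But then both numerator and denominator of $u$ have order $-2n$ with the same leading term $x^{2}$, so $u\equiv 1\pmod{\pi^{2n}}$ at $\ttt$ for a uniformizer $\pi$; hence $u+b\equiv 1+b\not\equiv 0$, contradicting that $\ttt$ is a zero of $u+b$.

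Finally I would handle poles of $u+b$. A pole of $u+b$ at $\ttt$ must come from a zero of $x^{2}+t$ not cancelled by a zero of $(1+b)x^{2}+t^{2}+(1+b)t$; but a common zero of these two polynomials is a zero of their difference $t^{2}+bt=t(t+b)$, which is impossible since $\ttt$ is not a zero of $t$ (if $b=0$ this is immediate; if $b\neq 0$, a zero of $t+b$ would still force $\ttt$ to be a zero of $x^{2}+t$ with $t$ a unit, which does not lower the order by more than one anyway). So the pole order of $u+b$ at $\ttt$ equals $\ord_{\ttt}(x^{2}+t)$. Since $d(x^{2}+t)/dt=1$ in characteristic $2$, Corollary~\ref{cornoram} applied to $x^{2}+t$ yields $\ord_{\ttt}(x^{2}+t)\leq 1$, so the pole is simple. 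The main subtlety — and the only place characteristic $2$ makes a real difference from Lemma~\ref{le:simplezero} — is that one cannot reuse the rational function $(x^{p}+t)/(x^{p}-t)$ because its derivative vanishes when $p=2$; the modification $(x^{2}+t^{2}+t)/(x^{2}+t)$ is chosen precisely so that $d(u+b)/dt$ is a nonzero rational function with controlled divisor, which is the crux of the argument.
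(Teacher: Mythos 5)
The paper states Lemma~\ref{le:simplezero2} without proof, citing \cite{Eis}, so the natural comparison is with the paper's proof of the analogous $p>2$ statement, Lemma~\ref{le:simplezero}. Your argument correctly mirrors that proof: compute the global $t$-derivative, note that $d(u+b)/dt=t^2/(x^2+t)^2$, and apply Corollary~\ref{cornoram}. The zero case is handled correctly, including the check that a pole of $x$ makes $u+b$ a unit with value $1+b\neq 0$. The pole case is also correct in its conclusion, and your use of $d(x^2+t)/dt=1$ to bound $\ord_{\ttt}(x^2+t)\leq 1$ directly is arguably cleaner than the $p>2$ proof, which treats poles by passing to $u^{-1}$ and repeating the argument.

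Two small points. First, there is a computational slip: the difference of numerator $(1+b)x^2+t^2+(1+b)t$ and denominator $x^2+t$ is $bx^2+t^2+bt$, not $t^2+bt=t(t+b)$; the cleanest route is $(1+b)(x^2+t)-\bigl[(1+b)x^2+t^2+(1+b)t\bigr]=t^2$ in characteristic $2$, which immediately shows a common zero of numerator and denominator must divide $t$, making your parenthetical discussion of $t+b$ unnecessary. Second, the common-zero analysis is not actually needed for the pole bound: any zero of the numerator at $\ttt$ only \emph{decreases} the pole order of $u+b$, so $\ord_{\ttt}(u+b)\geq -\ord_{\ttt}(x^2+t)\geq -1$ already suffices. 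Neither point affects the validity of the conclusion.
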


The lemma below is a technical result we need to define the $p^a$-th powers of elements with simple zeros and poles.
 \begin{lemma}[This lemma is slightly modified from \cite{Sh34}, Lemma 8.2.11]
  \label{lemma:eqzero}%
  Let $\sigma, \mu \in M$. Assume that all the primes that are poles of $\sigma$ or
  $\mu$ do not ramify in the extension $M/F(t)$. Further, assume  the following equality holds.
\begin{equation}%
\label{eq:1.3}%
t(\sigma^{p^a} - \sigma) = \mu^{p^a} - \mu.%
\end{equation}%
In this case $\sigma^{p^a} - \sigma = \mu^{p^a} -\mu = 0$. (Here we
remind the reader that by assumption, the primes occurring in the
divisor of $t$ do not ramify in $M/F(t)$.)
\end{lemma}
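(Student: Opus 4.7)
The plan is to show first that neither $\sigma$ nor $\mu$ can have any pole, using a residue-modulo-$p^a$ analysis at zeros and poles of $t$, and a derivation argument elsewhere. Once both are constants, a short direct argument at a pole of $t$ finishes the job.

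First I would exploit the fact that pole orders of an element of the form $y^{p^a}-y$ are always nonpositive multiples of $p^a$. Let $\rr$ be a zero or pole of $t$ (all of which are simple and unramified in $M/F(t)$ by Notation~\ref{notation: pthpowers}). Suppose $\sigma$ has a pole at $\rr$ of order $-n<0$. Then $\ord_\rr(\sigma^{p^a}-\sigma)=-np^a$ and hence $\ord_\rr\bigl(t(\sigma^{p^a}-\sigma)\bigr) = \ord_\rr t - np^a$, which is $\pm 1 - np^a$, a negative integer $\not\equiv 0 \pmod{p^a}$. This cannot equal $\ord_\rr(\mu^{p^a}-\mu)$, which is either nonnegative or a negative multiple of $p^a$. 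The same reasoning, applied to $\sigma^{p^a}-\sigma = t^{-1}(\mu^{p^a}-\mu)$, rules out poles of $\mu$ at zeros and poles of $t$.

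Next I would differentiate the equation with respect to $t$ and use the vanishing of $p^a$-th power derivatives to obtain
\[
\mu' \;=\; t\sigma' - \sigma^{p^a} + \sigma.
\]
Suppose $\sigma$ has a pole at some prime $\rr$ that is not a zero or pole of $t$; by hypothesis this $\rr$ is unramified in $M/F(t)$, so Lemma~\ref{noram} gives $d_t(\rr)=0$. Writing $\ord_\rr\sigma = -n$, Step~1 together with the original equation forces $\ord_\rr\mu = -n$. By Lemma~\ref{Ania} we have $\ord_\rr\mu' \ge -n-1$ and $\ord_\rr(t\sigma') \ge -n-1$, while $\ord_\rr\sigma^{p^a}=-np^a$ and $\ord_\rr\sigma = -n$. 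Since in our setup $p^a \ge 3$ (by the choice $a=1$ for $p>2$ and $a=2$ for $p=2$), for $n \ge 1$ we have $-np^a \le -3n < -n-1$, so on the right-hand side the term $-\sigma^{p^a}$ dominates strictly and $\ord_\rr(\text{RHS}) = -np^a$. This contradicts $\ord_\rr\mu' \ge -n-1$, so no such pole of $\sigma$ exists. Combined with Step~1, $\sigma$ has no poles in $M$ and is therefore a constant in $F$.

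Once $\sigma\in F$, the element $c := \sigma^{p^a}-\sigma$ is a constant, and the hypothesis becomes $tc = \mu^{p^a}-\mu$. If $c\ne 0$, pick any pole $\qq$ of $t$ (unramified by assumption); then $\ord_\qq(tc) = -1$, while $\ord_\qq(\mu^{p^a}-\mu)$ is either $\ge 0$ or a negative multiple of $p^a$, never $-1$. Hence $c = 0$, and consequently $\mu^{p^a}-\mu = 0$ as well. The main technical hurdle is the derivation step: it requires simultaneously controlling orders of $\sigma'$ and $\mu'$ via Lemmas~\ref{Mason}, \ref{Ania} and \ref{noram}, and crucially depends on having $p^a \ge 3$ so that the $\sigma^{p^a}$ term is the unique lowest-order contribution on the right.
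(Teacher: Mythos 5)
Your proof is correct, and it departs from the paper's argument in an interesting way, even though both ultimately hinge on the same two facts: pole orders of $y^{p^a}-y$ are nonnegative or negative multiples of $p^a$, and the global derivation with respect to $t$ annihilates $p^a$-th powers. Your Step~1, ruling out poles of $\sigma$ and $\mu$ along the divisor of $t$ by the ``order mod $p^a$'' mismatch, is essentially the same analysis the paper carries out to show $\ord_{\pp_i}\sigma,\ord_{\pp_i}\mu,\ord_{\qq_i}\sigma,\ord_{\qq_i}\mu\geq 0$. Where you differ is Step~2: the paper first invokes the Strong Approximation Theorem to produce an auxiliary $b$ with divisor $\Bb\Dd/\qq_1^c$, so that $s_1=b\sigma$ and $s_2=b\mu$ are integral over $F[t]$ and have zero divisors coprime to the shared pole divisor $\Bb$; it then differentiates the integral combination $s_1^{p^a}t-s_2^{p^a}=b^{p^a-1}(s_1t-s_2)$ and applies Corollary~\ref{cornoram} at a factor $\ttt$ of $\Bb$ to force $s_1$ to vanish there, a contradiction. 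You instead differentiate the original relation directly, obtain $\mu'=t\sigma'+\sigma-\sigma^{p^a}$, and compare pole orders at a hypothetical pole $\rr$ of $\sigma$ outside the support of $t$: the right side has order exactly $-np^a$ because $\sigma^{p^a}$ strictly dominates (using $p^a\geq 3$ so that $-np^a<-n-1$), while Lemma~\ref{Ania} with $d_t(\rr)=0$ bounds $\ord_\rr\mu'\geq -n-1$, a contradiction. Your route avoids the construction of $b$ entirely and makes the role of the condition $p^a\geq 3$ (equivalently $p^a>2$, which the paper also uses) completely transparent. Your Step~3 is also slightly slicker than the paper's: you need only that $\sigma$ is constant, then use a pole of $t$ to force $c=\sigma^{p^a}-\sigma=0$, which gives $\mu^{p^a}-\mu=0$ for free without first establishing that $\mu$ is constant. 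Both proofs are valid; yours is shorter and removes the dependence on the Strong Approximation step.
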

\begin{proof}%
  Let $\Aa, \Bb$ be integral divisors of $M$, relatively prime to
  each other and to ${\mathfrak P}=\prod_i\pp_i$ and ${\mathfrak    Q}=\prod_i\qq_i$ (in other words, no prime occurring in $\Aa$ or $\Bb$ occurs in the divisor of $t$), and such that the divisor of $\sigma$ is of the form
  $\displaystyle \frac{\Aa}{\Bb}\prod_i {\pp_i}^{n_i}\prod_i{\qq_i}^{k_i}$, where   $n_i,k_i$ are integers for all $i$.  It is not hard   to see that for some integral divisor ${\mathfrak C}$, relatively
  prime to $\Bb,\Pp$, and $\Qq$, some  integers $a_i,b_i$, the divisor
  of $\mu$ is of the form $\displaystyle \frac{\Cc}{\Bb}\prod_i
  \pp_i^{a_i} \prod_i\qq_i^{b_i}$. Indeed, if $\ttt$ is a pole of $\mu$  such that $\ttt$ does not divide $\Pp$ or $\Qq$, then
\[%
0 > p^a\ord_{\mte{t}}\mu = \ord_{\mte{t}}(\mu^{p^a} - \mu) = \ord_{\mte t}(t(\sigma^{p^a} - \sigma)) =
\ord_{\mte{t}}(\sigma^{p^a} - \sigma) = p^a\ord_{\mte{t}}\sigma.%
\]%
 Conversely, if $\ttt$ is a  pole of $\sigma$ such that $\ttt$ does not divide $\Pp$ or $\Qq$, then%
\[%
0 > p^a\ord_{\ttt}\sigma = \ord_{\ttt}(\sigma^{p^a} - \sigma) = \ord_{\ttt}(t(\sigma^{p^a} - \sigma)) =
\ord_{\ttt}(\mu^{p^a} - \mu) = p^a\ord_{\ttt}\mu. %
\]%
Further we can also deduce that for each $\pp_i$ we have that
$\ord_{\pp_i}\sigma \geq 0$ and $\ord_{\pp_i}\mu \geq 0$.  To see that
this is the case suppose $\ord_{\pp_i}\sigma <0$ and conclude that
\begin{align}
\label{eq:polesigma}
\ord_{\pp_i}t(\sigma^{p^a} - \sigma) <0, \mbox{ and }\\
\ord_{\pp_i}t(\sigma^{p^a} - \sigma) \not \equiv 0 \mod p.
\end{align}
At the same time \eqref{eq:polesigma} implies that
\begin{align}
\label{eq:polemu}
\ord_{\pp_i}(\mu^{p^a} - \mu) <0, \mbox{ and }\\
\ord_{\pp_i}(\mu^{p^a} - \mu) \equiv 0 \mod p.
\end{align}
Therefore assuming $\ord_{\pp_i}\sigma <0$ leads to a contradiction.
Similarly, if $\ord_{\pp_i}\mu <0$ then \eqref{eq:polemu} and
\eqref{eq:polesigma} hold and we again obtain a contradiction.
Assuming that $\ord_{\qq_i}\sigma <0$, $\ord_{\qq_i}\mu <0$ results
in a contradiction of a similar type.  Thus, we can assume that $a_i, b_i, n_i, k_i \geq 0$ for all $i$.

By the Strong Approximation Theorem there exists $b \in M^{\times}$ such that
the divisor of $b$ is of the form $\Bb\Dd/\qq_1^c$, where $\Dd$ is an
integral divisor relatively prime to $\Aa, \Bb, \Cc, \Pp, \Qq$, and $c$ is
a positive integer. Then $b\sigma = s_1, b\mu = s_2$, where $s_1, s_2$
are integral over $F[t]$ and have zero divisors relatively prime to
$\Bb$. Indeed, consider the divisors of $s_1=b\sigma$: %
\[%
\frac{\mathfrak {BD}}{\qq_1^c}\frac{\mathfrak A}{\mathfrak B}\prod_i \pp_i^{n_i}\prod_j\qq_j^{k_j} =
\mathfrak {DA}\prod_i \pp_i^{n_i}\qq_1^{k_1-c}\prod_{j>1}\qq_j^{k_j}
\]%
The pole of $s_1$ is a factor of $\mathfrak Q$ and therefore $s_1$ is
integral over $F[t]$. Further, by construction $\Aa$ and $\Dd$ are
integral divisors relatively prime to $\Pp$ and $\Bb$. A similar
argument applies to $s_2$.%

Multiplying \eqref{eq:1.3} through by $b^{p^a}$ we will obtain the following equation.
\begin{equation}
\label{eq:1.4}%
t(s_1^{p^a} - b^{p^a-1}s_1) = s_2^{p^a} - b^{p^a-1}s_2.%
\end{equation}%
We can now rewrite (\ref{eq:1.4}) in the form%
\begin{equation}%
\label{eq:1.5}%
(s_1^{p^a}t - s_2^{p^a}) = b^{p^a-1}(s_1t - s_2).%
\end{equation}%
If $\ttt$ is any prime factor of $\Bb$ in $M$, then $\ttt$ does not
ramify in the extension $M/F(t)$, and since $p^a > 2$, we know that
$\ord_{\ttt}(s_1^{p^a}t - s_2^{p^a}) \geq 2$.  Further, we also have by Corollary \ref{cornoram}
\[%
\ord_{\ttt}\frac{d(s_1^{p^a}t - s_2^{p^a})}{dt} > 0.%
\]%
Finally,
\[%
 \ord_{\ttt}\frac{d(s_1^{p^a}t -s_2^{p^a})}{dt} = \ord_{\ttt}(s_1^{p^a}).%
\]%
Therefore, $s_1$ has a zero at $\ttt$. This, however, is impossible by
construction of $s_1$ as described above. Consequently, $\Bb$ is a
trivial divisor and $\mu$ and $\sigma$ are constants since their pole
divisor is trivial.  Now \eqref{eq:1.3} is implying that $t$ times a
constant is equal to a constant.  This can happen only if both
constants are zero.
\end{proof}

\begin{lemma}[\cite{Sh34}, Lemma 8.4.4]%
\label{le:reducepower}%
Let $s \in \Z_{>0}$. Let $x, v \in M\setminus\{0\}$ and assume that for some $\tilde{v}\in M$ we have that
$\tilde{v}^{p^a} = v$. Let $\displaystyle u = \frac{x^p + t}{x^p - t}$ if $p >2$ and let $\displaystyle u=\frac{x^2 +t^2+ t}{x^2 + t}$, if $p
=2$. Further, assume that%
 \begin{equation}%
 \label{eq:pthu}%
 \begin{array}{c}%
 \exists \mu_{i,j_i,l,j_l,z,m}, \sigma_{i,j_i,l,j_l}, \lambda_1, \lambda_{-1} \in M\\
 \forall i \exists j_i \forall(l\not=i)\exists j_l \forall m\forall z:\\%
 D(s,i,j_i, l,j_l,m,z,u, v,\mu_{i,j_i,l,j_l,z,m}, \sigma_{i,j_i,l,j_l}, \lambda_1, \lambda_{-1})%
\end{array}%
\end{equation}%
holds.
Then%
\begin{equation}%
\label{eq:minus1}
\begin{array}{c}%
\exists \tilde{\mu}_{i,j_i,l,j_l,z,m}, \tilde{\sigma}_{i,j_i,l,j_l},\tilde \lambda_1, \tilde \lambda_{-1} \in M\\%
\forall i \exists j_i \forall(l\not=i)\exists j_l \forall m\forall z:\\%
D(s-1,i,j_i, l,j_l,m,z,u, \tilde{v},\tilde{\mu}_{i,j_i,l,j_l,z,m}, \tilde{\sigma}_{i,j_i,l,j_l},\tilde{\nu}_{i,j_i,z}, \tilde \lambda_1, \tilde \lambda_{-1})%
\end{array}%
\end{equation}%
holds.
\end{lemma}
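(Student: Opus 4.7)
The plan is to descend from $D(s, \ldots, u, v, \ldots)$ to $D(s-1, \ldots, u, \tilde v, \ldots)$ by constructing explicit witnesses from the given ones and the hypothesis $\tilde v^{p^a} = v$. The fundamental observation is the characteristic-$p$ identity $(\tilde v + c)^{p^a} = v + c^{p^a}$ for any constant $c$, which lets Artin--Schreier equations involving $v$ be translated into Artin--Schreier equations of reduced depth involving $\tilde v$.

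First I would reindex. For each witness $j_i \in \{1,\ldots,r_i\}$ from the $s$-system, take $\tilde j_i \in \{1,\ldots,r_i\}$ with $\tilde j_i \equiv j_i - a \pmod{r_i}$, and analogously for $\tilde j_l$. Since $c_i^{p^{r_i}} = c_i$, this forces $d_{i,\tilde j_i}^{p^a} = d_{i,j_i}$, so that $\tilde v_{i,\tilde j_i,l,\tilde j_l}^{p^a} = v_{i,j_i,l,j_l}$.

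Next I would produce witnesses for the auxiliary equations by direct substitution. For \eqref{eq:2.5}, set $\tilde\lambda_1 := \lambda_1 - \tilde v$; a brief computation using $\tilde v^{p^a} = v$ gives
\[
\tilde\lambda_1^{p^a} - \tilde\lambda_1 = (\lambda_1^{p^a} - \lambda_1) - v + \tilde v = (v-u) - v + \tilde v = \tilde v - u.
\]
Analogous definitions $\tilde\lambda_{-1} := \lambda_{-1} - \tilde v^{-1}$ and $\tilde\sigma_{i,\tilde j_i,l,\tilde j_l} := \sigma_{i,j_i,l,j_l} - \tilde v_{i,\tilde j_i,l,\tilde j_l}$ verify \eqref{eq:2.5.1} and \eqref{eq:2.4}, while the formulas \eqref{eq:duf2.1} and \eqref{eq:2.2} are purely definitional and hold automatically.

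The main obstacle is equation \eqref{eq:2.3}. Writing $V := \tilde v_{i,\tilde j_i,l,\tilde j_l}^{2z} t^{mp^{a(s-1)}}$ and $U := u_{i,l}^{2z} t^m$, the reindexing converts the original into $V^{p^a} - U = \mu^{p^{as}} - \mu$, and the Frobenius identity then gives $(V - \mu^{p^{a(s-1)}})^{p^a} = U - \mu$, so $\alpha := V - \mu^{p^{a(s-1)}}$ is an element of $M$ with $\alpha^{p^a} = U - \mu$. What remains is to find $\tilde\mu \in M$ satisfying $\tilde\mu^{p^{a(s-1)}} - \tilde\mu = V - U$. Writing $F$ for the $p^a$-power Frobenius, the factorization $F^{s-1} - 1 = (F-1)(1 + F + \cdots + F^{s-2})$ in the endomorphism ring, combined with the telescoping $\mu^{p^{as}} - \mu = \sum_{k=0}^{s-1}(\mu^{p^{a(k+1)}} - \mu^{p^{ak}})$, lets one rewrite $V - U$ as an Artin--Schreier image of depth $p^{a(s-1)}$. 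The explicit construction of the corresponding $\tilde\mu$ from $\mu$, $V$, and the auxiliary witnesses (for instance, in the base case $s=2$ the choice $\tilde\mu = \mu + \mu^{p^a} - V$ works directly) is the technical heart of the proof, and is carried out in detail in \cite{Sh34}, Lemma 8.4.4.
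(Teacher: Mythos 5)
The paper gives no proof of this lemma, only a citation to \cite{Sh34}, so there is nothing in the paper to compare against; the evaluation is on the merits of your argument alone.

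Your overall plan --- reindex $j_i \mapsto \tilde j_i$ with $\tilde j_i \equiv j_i - a \pmod{r_i}$ so that $d_{i,\tilde j_i}^{p^a} = d_{i,j_i}$ and hence $\tilde v_{i,\tilde j_i,l,\tilde j_l}^{p^a} = v_{i,j_i,l,j_l}$, then build witnesses by direct substitution --- is exactly right. Your treatment of \eqref{eq:duf2.1}, \eqref{eq:2.2}, \eqref{eq:2.4}, \eqref{eq:2.5}, and \eqref{eq:2.5.1} is correct and complete: $\tilde\lambda_1 = \lambda_1 - \tilde v$, $\tilde\lambda_{-1} = \lambda_{-1} - \tilde v^{-1}$, and $\tilde\sigma_{i,\tilde j_i,l,\tilde j_l} = \sigma_{i,j_i,l,j_l} - \tilde v_{i,\tilde j_i,l,\tilde j_l}$ do the job.

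The gap is in \eqref{eq:2.3}, which is the only step that actually matters, and there you stop short: after arranging $V^{p^a}-U = \mu^{p^{as}}-\mu$ you observe that $\alpha := V - \mu^{p^{a(s-1)}}$ satisfies $\alpha^{p^a}=U-\mu$, but then defer the construction of $\tilde\mu$ entirely to \cite{Sh34} with a vague gesture at a Frobenius telescoping. That is not a proof of the step --- it is a pointer to one. Moreover, you have taken the exponent in \eqref{eq:2.3} at face value as $\mu^{p^{as}}-\mu$. This is almost certainly a typo in the paper: every place \eqref{eq:2.3} is actually invoked --- equations \eqref{eq:mod0.1} through \eqref{eq:mod2} in the proof of Lemma~\ref{lemma:morepth}, and the identity \eqref{eq:210} in the proof of Lemma~\ref{satisfy} --- it appears as $\mu^{p^a}-\mu$ (the exponent does not grow with $s$). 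With the intended reading $\mu^{p^a}-\mu$, the descent in \eqref{eq:2.3} is no harder than the other equations: writing $V := \tilde v_{i,\tilde j_i,l,\tilde j_l}^{2z}t^{mp^{a(s-1)}}$ and $U := u_{i,l}^{2z}t^m$, the hypothesis gives $V^{p^a}-U = \mu^{p^a}-\mu$, and setting $\tilde\mu := \mu - V$ yields
\[
\tilde\mu^{p^a}-\tilde\mu = (\mu^{p^a}-\mu) - (V^{p^a}-V) = (V^{p^a}-U) - (V^{p^a}-V) = V - U,
\]
exactly as for $\tilde\lambda_1$. (Your base-case check $\tilde\mu = \mu + \mu^{p^a} - V$ for $s=2$ is correct under the $\mu^{p^{as}}$ reading, which confirms you were working with the typo'd version; under the correct reading the formula is uniform in $s$.) So the idea is sound and the auxiliary computations are right, but the claimed reduction of the central equation is missing, and the whole step becomes trivial once the exponent in \eqref{eq:2.3} is corrected.
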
%
\begin{lemma}[\cite{Sh34}, Lemma 8.4.5, Corollary 8.4.6 and
  \cite{Eis}, Lemma 3.9]%
\label{lemma:morepth}%
Let $s \in \Z_{\geq 0}, x, v \in M\setminus\{0\}$. Let $\displaystyle u = \frac{x^p  + t}{x^p - t}$, if $p >2$, and let $\displaystyle u=\frac{x^2 +t^2+ t}{x^2 + t}$, if
$p =2$. Further, assume that (\ref{eq:pthu}) holds. Then $v = u^{p^{as}}$.  
\end{lemma}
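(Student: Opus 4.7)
The plan is to prove the lemma by induction on $s$, using Lemma \ref{le:reducepower} to descend. Since Lemma \ref{le:reducepower} requires as a hypothesis that $v$ is already a $p^a$-th power in $M$, the principal task at each inductive step is to verify this hypothesis; once it is available, the reduction and the induction hypothesis do all the remaining work.

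For the base case $s = 0$, equations (\ref{eq:2.5}) and (\ref{eq:2.5.1}) read $v - u = \lambda_1^{p^a} - \lambda_1$ and $v^{-1} - u^{-1} = \lambda_{-1}^{p^a} - \lambda_{-1}$. By Lemma \ref{le:simplezero} (when $p>2$) or Lemma \ref{le:simplezero2} (when $p=2$), the elements $u \pm c$ have only simple zeros and poles away from the primes ramifying in $M/F(t)$ and the divisor of $t$. A divisor analysis of these Artin--Schreier relations, together with the argument of Lemma \ref{lemma:eqzero} transplanted to $u$ in place of $t$, forces $\lambda_1 = \lambda_{-1} = 0$, and hence $v = u = u^{p^0}$.

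For the inductive step, assume the lemma for $s - 1 \geq 0$ and that the hypotheses hold for $s$. I first produce $\tilde v \in M$ with $\tilde v^{p^a} = v$. Using Lemma \ref{cor:C(w)}, choose a pair $i \neq l$ with $c_i, c_l \in C_u$, so that $\frac{u + c_i}{u + c_l}$ has only simple zeros and poles outside a controlled bad set. For such $i, l$, equations (\ref{eq:duf2.1})--(\ref{eq:2.4}), together with (\ref{eq:2.3}) for $m = 0,1$ and $z = \pm 1$, package precisely into the hypotheses of Proposition \ref{theend} with $\frac{u+c_i}{u+c_l}$ in the role of $t$ and $\frac{v+d_{i,j_i}}{v+d_{l,j_l}}$ in the role of $w$. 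Proposition \ref{theend}'s conclusion, after unwinding, yields that the divisor of $v + d_{i,j_i}$ is a $p^a$-th power of another divisor of $M$ for sufficiently many pairwise distinct constants $d_{i,j_i}$; applying Lemma \ref{lemma:divpth} to $v$ then shows that $v$ itself is a $p^a$-th power. Write $v = \tilde v^{p^a}$. Lemma \ref{le:reducepower} now supplies a solution of the system $D(s-1, \ldots, u, \tilde v, \ldots)$, and by the induction hypothesis $\tilde v = u^{p^{a(s-1)}}$, so $v = u^{p^{as}}$.

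The main obstacle is the first part of the inductive step: leveraging the Artin--Schreier relations in $D(s, \ldots)$ to conclude that $v$ itself is a $p^a$-th power in $M$. This is exactly where the care invested in Notation and Assumptions \ref{notation: pthpowers} pays off: the multiplicatively independent constants in $C(F)$ guarantee enough distinct translates $v + d_{i,j_i}$, and the simple-pole-and-zero property of $u \pm c$ (Lemmas \ref{le:simplezero}, \ref{le:simplezero2}) keeps the divisor bookkeeping at primes outside the controlled bad set tight enough to feed Proposition \ref{theend} and ultimately Lemma \ref{lemma:divpth}.
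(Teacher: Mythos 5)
The overall structure of your proposal---induction on $s$ using Lemma~\ref{le:reducepower} as the descent step---is the right shape and matches the paper. However, both the base case and the key claim in the inductive step contain genuine gaps.

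\textbf{Base case.} You claim that a divisor analysis of (\ref{eq:2.5}) and (\ref{eq:2.5.1}) alone, ``transplanted from Lemma~\ref{lemma:eqzero} with $u$ in place of $t$,'' forces $\lambda_1 = \lambda_{-1} = 0$. This cannot be right: if $v = u^{p^a}$ (the case $s=1$), then $v - u = u^{p^a} - u$ and $v^{-1} - u^{-1} = (u^{-1})^{p^a} - u^{-1}$, so both (\ref{eq:2.5}) and (\ref{eq:2.5.1}) hold with nonzero $\lambda_1 = u$ and $\lambda_{-1} = u^{-1}$. These two equations are therefore insensitive to the value of $s$ and cannot distinguish $v=u$ from $v = u^{p^{as}}$ with $s>0$. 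The paper's $s=0$ argument instead combines the $m=0$ and $m=1$ instances of (\ref{eq:2.3}) to obtain $\mu_{\cdot,1,1}^{p^a} - \mu_{\cdot,1,1} = t\bigl(\mu_{\cdot,0,1}^{p^a} - \mu_{\cdot,0,1}\bigr)$, then applies Lemma~\ref{lemma:eqzero} (with $t$ itself, not $u$) to get $v_{\cdot}^2 = u_{\cdot}^2$, uses (\ref{eq:2.4}) to rule out the minus sign, and only then brings in (\ref{eq:2.5}) and (\ref{eq:2.5.1}) to pin down the constants in $v = au+b$. You would need that whole chain.

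\textbf{Inductive step.} You assert that (\ref{eq:duf2.1})--(\ref{eq:2.4}) together with (\ref{eq:2.3}) ``package precisely into the hypotheses of Proposition~\ref{theend}'' with $\frac{u+c_i}{u+c_l}$ playing the role of $t$ and $\frac{v+d_{i,j_i}}{v+d_{l,j_l}}$ the role of $w$. They do not: Proposition~\ref{theend} requires (\ref{eq:alreadydown0}), i.e.\ $w - t = v^{p^a}-v$ and $\tfrac1w - \tfrac1t = u^{p^a}-u$, whereas (\ref{eq:2.3}) gives $v_{\cdot}^{\pm 2}t^{mp^{as}} - u_{\cdot}^{\pm 2}t^m$ in an Artin--Schreier image, which is not the same shape (the exponent is $\pm 2$ not $\pm 1$, and the $t^m$ factor carries the actual dependence on $s$). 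Even if you could force the match, the conclusion of Proposition~\ref{theend} is $w = t^{p^{a\sigma}}$ for some $\sigma$; that relation on the \emph{ratio} $\frac{v+d_{i,j_i}}{v+d_{l,j_l}}$ does not entail that the divisor of $v + d_{i,j_i}$ by itself is a $p^a$-th power, which is what Lemma~\ref{lemma:divpth} needs. The paper proceeds instead by a direct dichotomy: either the divisors of all the $v_{i,j_i,l_f,j_{l_f}}$ at good primes are $p^a$-th powers (then Lemma~\ref{lemma:divpth} finishes), or some $v_{i,j_i,l_f,j_{l_f}}$ has a simple zero or pole at a good prime, in which case combining the $m=0$ and $m=1$ versions of (\ref{eq:2.3}) forces $t^{p^{as}} - t$ to have a multiple zero at an unramified prime not dividing the divisor of $t$, which is impossible. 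Your proposed route through Proposition~\ref{theend} bypasses this and does not close.
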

\begin{proof}
  First of all, we claim that for all $i, l$, it is the case that
  $u_{i,l}$ has no multiple zeros or poles except possibly at the
  primes with factors ramifying in $M/F(t)$, or poles or zeros of
  $t$. Indeed, all the poles of $u_{i,l}$ are zeros of $u + c_l$ and
  all the zeros of $u_{i,l}$ are zeros $u+c_i$. However, by Lemma
  \ref{le:simplezero} and by assumption on $c_i$ and $c_l$, all the
  zeros of $u + c_l$ and $u + c_i$ are simple, except possibly for
  zeros at the primes which are zeros or poles of $t$ or have factors
  ramifying in the extension $M/F(t)$.

We will show that if $s > 0$ then $v$ is a $p^a$-th power in $M$, and if $s=0$ then $u=v$. This assertion together
with Lemma \ref{le:reducepower} will produce the desired conclusion.

Note that by Corollary \ref{cor:C(w)}, we can choose distinct natural
numbers 
\[
i, l_1,\ldots,l_{k+1} \in \{0,\ldots,C_5\}
\]
 such that 
 \[
 \{c_i,
c_{l_1},\ldots,c_{l_{k+1}}\} \subset C_v \cap C_u 
\]
 and for all $1
\leq j_i \leq r_i,1 \leq j_{l_f} \leq r_{l_f}$, with $f=1,\ldots, k+1$,
we have that $u_{i,l_f}$ and $v_{i,j_i,l_f, j_{l_f}}$ have no
zeros or poles at the primes of $M$ with factors ramifying in the
extension $M/F(t)$, or primes occurring in the $M$-divisor of
$t$. Note also that for thus selected indices, all the poles and zeros
of $u_{i,l_f}$ are simple. We now proceed to pick natural numbers $i,
l_1,\ldots,l_{k+1},j_i,j_{l_1},\ldots,j_{l_{k+1}}$ such that the
equations in (\ref{eq:duf2.1}) - (\ref{eq:2.4}) are satisfied for
these values of indices, and $u_{i,l_1}$, $v_{i,j_i,l_1, j_{l_1}},
\ldots$, $u_{i,l_{k+1}}$, $v_{i,j_i,l_{k+1}, j_{l_{k+1}}}$ have no
poles or zeros at primes with factors ramifying in the extension
$M/F(t)$, or at primes occurring in the $M$-divisor of $t$.\\

Now assume $s > 0$, and let  $f$ range over the set $\{1,\ldots,k+1\}$.  First  let $z=\pm1$, while $m=0$, and
consider the two versions of the equation in (\ref{eq:2.3}) with these values of $z$ and $m$.
\begin{equation}
\label{eq:mod0.1}%
 v_{i,j_i,l_f,j_{l_f}}^2 - u_{i,l_f}^2 = \mu_{i,j_i,l_f,j_{l_f},1,0}^{p^a} -\mu_{i,j_i,l_f,j_{l_f},1,0},%
\end{equation}
\begin{equation}
\label{eq:mod0}%
 v_{i,j_i,l_f,j_{l_f}}^{-2} - u_{i,l_f}^{-2} = \mu_{i,j_i,l_f,j_{l_f},-1,0}^{p^a} -\mu_{i,j_i,l_f,j_{l_f},-1,0},%
\end{equation}%
Here either for all $f = 1,\ldots,k+1$, the divisor of
$v_{i,j_i,l_f,j_{l_f}}$ in $M$ is a $p^a$-th power of another divisor,
or for some $f$ and some prime {\te t} without factors ramifying in
the extension $M/F(t)$ and not occurring in the $M$-divisor of $t$
we have that $\ord_{\mte{t}}v_{i,j_i,l_f,j_{l_f}} = \pm 1$.

In the first case, given the assumption that $v_{i,j_i,l_f,j_{l_f}}$'s
do not have poles or zeros at ramifying primes and Lemma
\ref{lemma:divpth}, we have that $v$ is a $p^a$-th power in $M$.

So suppose the second alternative holds. In this case, without loss of
generality, assume {\te t} is a pole of $v_{i,j_i,l_f,j_{l_f}}$ for
some $f$. Next consider the following equations
\begin{equation}
\label{eq:mod1}%
 v_{i,j_i,l_f,j_{l_f}}^2t^{p^{as}} - u_{i,l_f}^2t = \mu_{i,j_i,l_f,j_{l_f},1,1}^{p^a} -\mu_{i,j_i,l_f,j_{l_f},1,1},%
\end{equation}
\begin{equation}
\label{eq:mod2}%
 v_{i,j_i,l_f,j_{k_f}}^2 - u_{i,l_f}^2 = \mu_{i,j_i,l_f,j_{l_f},0,1}^{p^a} -\mu_{i,j_i,l_f,j_{l_f},0,1},%
\end{equation}%
obtained from (\ref{eq:2.3}) by first making $z = 1, m = 1$ and then $z= 1, m = 0$. (If {\te t} were a zero of
$v_{i,j_i,l_f,j_{l_f}}$, then we would set $z$ equal to -1 in both equations.) Since $t$ does not have a  pole or
 zero at {\te t} and $p^a >2$, we must conclude that%
\[%
\ord_{\mte{t}}(v_{i,j_i,l_f,j_{l_f}}^2t^{p^{as}} - u_{i,l_f}^2t) =
\ord_{\mte{t}}(\mu_{i,j_i,l_f,j_{l_f},1,1}^{p^a}- \mu_{i,j_i,l_f,j_{l_f},1,1}) \geq 0%
\]%
and%
\[%
\ord_{\mte{ t}}(v_{i,j_i,l_f,j_{l_f}}^2 - u_{i,l_f}^2) = \ord_{\mte{t}}(\mu_{i,j_i,l_f,j_{l_f},0,1}^{p^a} -
\mu_{i,j_i,l_f,j_{l_f},0,1}) \geq 0%
\]%
Thus,%
\[%
\ord_{\mte{t}}v_{i,j_i,l_f,j_{l_f}}^2(t^{p^{as}} - t)
\]%
\[%
= \mbox{ord}_{\mbox{\te t}}(\mu_{i,j_i,l_f,j_{l_f},1,1}^{p^a} -\mu_{i,j_i,l_f,j_{l_f},1,1} -
t\mu_{i,j_i,l_f,j_{l_f},0,1}^{p^a} + t\mu_{i,j_i,l_f,j_{l_f},0,1}) \geq 0.%
\]%
 Finally, we must deduce that
$\ord_{\mte{t}}(t^{p^{as}} - t) \geq 2|\ord_{\mte{t}}v|$. But in $F(t)$ all the zeros of $(t^{p^{as}} - t)$ are
simple. Thus, this function can have multiple zeros only at primes ramifying in the extension $M/F(t)$. By
assumption {\te t} is not one of these primes and thus we have a contradiction, unless $v$ is a $p^a$-th power.

Suppose now that $s = 0$. Set $e=1$ again and let $i,l_1,\ldots,l_{k+1}$ be selected as above. Then from
(\ref{eq:mod1}) and (\ref{eq:mod2}) we obtain for $l_f \in \{l_1,\ldots,l_{k+1}\}$,
\[%
\mu_{i,j_i,l_f,j_{l_f},1,1}^{p^a} - \mu_{i,j_i,l_f,j_{l_f},1,1} = t(\mu_{i,j_i,l_f,j_{l_f},0,1}^{p^a} - \mu_{i,j_i,l_f,j_{l_f},0,1}).%
\]%
Note here that all the poles of $\mu_{i,j_i,l_f,j_{l_f},1,1}$ and
$\mu_{i,j_i,l_f,j_{l_f},0,1}$ are poles of $u_{i,l_f}$,
$v_{i,j_i,l_f,j_{l_f}}$ or $t$, and thus are not at any valuation
ramifying in the extension $M/F(t)$. From Lemma \ref{lemma:eqzero} and
equation (\ref{eq:mod2}) we
can then conclude that for all $l_f \in \{l_1,\ldots,l_{k+1}\}$%
\[%
 v_{i,j_i,l_f,j_{l_f}}^2 -u_{i,l_f}^2 = 0.
 \]%
Thus, $v_{i,j_i,l_f,j_{l_f}} = \pm u_{i,l_f}$. Since all the poles of $u_{i,l_f}$ are simple, (\ref{eq:2.4})  rules out "-". Therefore,
\begin{equation}%
\label{eq:2.6}%
v_{i,j_i,l_f,j_{l_f}} =  u_{i,l_f}.
\end{equation}%
Rewriting (\ref{eq:2.6}) we obtain
\[
 \frac{d_{i,j_i} - d_{l_f,j_{l_f}}}{v+d_{l_f,j_{l_f}}} = \frac{c_i - c_{l_f}}{u+c_{l_f}},
\]
or
\begin{equation}%
\label{eq:sup1} v = au+b,%
\end{equation}%
where $a,b$ are constants. However, unless $b=0$, we have a
contradiction with (\ref{eq:2.5.1})  because, unless $b=0$, we
have that $v^{-1}$ and $u^{-1}$ have different, and in the case of $u$, always simple poles.
Finally, if $a \not =1$, then we have a contradiction with (\ref{eq:2.5})  because the difference, unless it is 0 (and therefore
$a=1$), will have simple poles.

\end{proof}
\subsection{Satisfying equations}
\label{subsec:cansatisfy}
We now address address the issue we have avoided so far: satisfying the equations constituting our Diophantine definitions.  Before we proceed we introduce one more notation.
\begin{notation}
Let $F_1 =\F_p(C(F))$.
\end{notation}
\begin{lemma}
\label{satisfy}
If $w = t^{p^{as}}, s \in \Z_{\geq 0}$ then Equations \eqref{eq:alreadydown0} can be satisfied over $\F_p(t)$ and Equations \eqref{eq:alreadydown1} can be satisfied over $F_1(t)$.  Further, if  $v = u^{p^{as}}$ then Equations \eqref{eq:pthu} can be satisfied over $F_1(t)$.
\end{lemma}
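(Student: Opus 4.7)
The plan rests on a single telescoping identity valid in any field of characteristic $p$: for any element $z$ and any integer $s\geq 0$, the element $\lambda = \sum_{k=0}^{s-1} z^{p^{ak}}$ satisfies $\lambda^{p^a} - \lambda = z^{p^{as}} - z$, since raising to the $p^a$-th power shifts every exponent by $p^a$ and only the boundary terms survive the cancellation. Every equation appearing in the lemma takes, after the hypothesis on $w$ or $v$ is substituted, the shape $Z^{p^{as}} - Z = (\text{unknown})^{p^a} - (\text{unknown})$, so this identity supplies an explicit witness.

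For \eqref{eq:alreadydown0}, substituting $w = t^{p^{as}}$ gives $w - t = t^{p^{as}} - t$ and $w^{-1} - t^{-1} = t^{-p^{as}} - t^{-1}$, so $v = \sum_{k=0}^{s-1} t^{p^{ak}}$ and $u = \sum_{k=0}^{s-1} t^{-p^{ak}}$, both in $\F_p(t)$, satisfy the equations.

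For \eqref{eq:alreadydown1}, I would choose $b = c_i^{p^{as}} \in V_i$ and $b' = c_j^{p^{as}} \in V_j$. Frobenius additivity in characteristic $p$ gives $w - b = (t - c_i)^{p^{as}}$ and $w - b' = (t - c_j)^{p^{as}}$, so
\[
\frac{w-b}{w-b'} = \left( \frac{t-c_i}{t-c_j} \right)^{p^{as}},
\]
and the left-hand sides of the two equations reduce to $y^{p^{as}} - y$ and $y^{-p^{as}} - y^{-1}$ for $y = (t-c_i)/(t-c_j) \in F_1(t)$. The identity applied to $y$ and $y^{-1}$ then yields $u_{i,j,b,b'}, v_{i,j,b,b'} \in F_1(t)$.

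For \eqref{eq:pthu}, given $v = u^{p^{as}}$, for each $i$ I would pick $j_i = as$ so that $d_{i,j_i} = c_i^{p^{as}} \in V_i$. Frobenius then yields $v + d_{i,j_i} = (u + c_i)^{p^{as}}$, whence $v_{i,j_i,l,j_l} = u_{i,l}^{p^{as}}$. Each equation of the system $D$ then reduces to a left-hand side of the form $Z^{p^{as}} - Z$: take $Z = u_{i,l}^{2z} t^m$ in \eqref{eq:2.3}, $Z = u_{i,l}$ in \eqref{eq:2.4}, $Z = u$ in \eqref{eq:2.5}, and $Z = u^{-1}$ in \eqref{eq:2.5.1}. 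For \eqref{eq:2.3}, whose right-hand exponent is $p^{as}$, one simply sets $\mu_{i,j_i,l,j_l,z,m} = Z$; for the other three, whose right-hand exponent is $p^a$, the telescoping identity supplies the witnesses $\sigma_{i,j_i,l,j_l}, \lambda_1, \lambda_{-1}$. The lemma is essentially a bookkeeping verification and poses no real obstacle: the only substantive checks are that $c_i^{p^{as}} \in V_i$, immediate from the definition $V_i = \{c_i^{p^k} : k \geq 0\}$, and the Frobenius identity $(x+y)^{p^r} = x^{p^r} + y^{p^r}$, which is standard.
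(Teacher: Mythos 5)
Your proof is correct and takes essentially the same route as the paper's: the paper's one tool is exactly your telescoping identity $x^{p^{as}} - x = \lambda^{p^a} - \lambda$ with $\lambda = x^{p^{a(s-1)}} + \dots + x$, combined with the observation that choosing $b = c_i^{p^{as}} \in V_i$ (etc.) makes the relevant expressions $p^{as}$-th powers via Frobenius. The only cosmetic point is that you should phrase the index choice as ``pick $j_i$ so that $d_{i,j_i} = c_i^{p^{as}}$'' rather than literally $j_i = as$, since $j_i$ ranges over $\{1,\dots,r_i\}$ and $c_i^{p^{\,\cdot}}$ is periodic of period $r_i$.
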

\begin{proof}
We start with an elementary equality which constitute the basis of all the constructions in this section.
\begin{equation}%
\label{eq:210}%
x^{p^{as}} - x = (x^{p^{a(s-1)}} + x^{p^{a(s-2)}} + \dots + x)^{p^a} - (x^{p^{a(s-1)}} + x^{p^{a(s-2)}} + \dots + x)%
\end{equation}%
To satisfy  Equations \eqref{eq:alreadydown0}, it is enough to note that \eqref{eq:210} holds over $\F_p(x)$.  To satisfy Equations \eqref{eq:alreadydown1}, it is enough to make sure that if $ w = t^{p^{as}}, s \in \Z_{\geq 0}$, then for all $i, j$ there exist $b \in V_i, b' \in V_j$ such that $\displaystyle \frac{w+b}{w+b'}=\left (\frac{t+c_i}{t+c_j}\right )^{p^{as}}$.  This fact, however, follows immediately from the definition of $V_i$ and $V_j$ which contain all the $p$-th powers of $c_i$ and $c_j$ respectively.

Assuming $v = u^{p^{as}}$,  for
some $1 \leq j_i \leq r_i, 1 \leq j_k \leq r_k$, we have $v_{i,j_i,k,j_k} = (u_{i,k})^{p^{as}}$ for the same reason, since $|V_i|=r_i$ and $|V_j|=r_j$.
\end{proof}

\subsection{Defining $p$-th powers of  arbitrary elements}
\label{all}
We are now ready for the last sequence of propositions concluding the
proof.  We will have to separate the case of $p=2$ again.  We start
with the case of $p>2$.

\begin{proposition}[\cite{Sh34}, Proposition 8.4.8]%
\label{prop:pithx}%
Let $p >2$. Let $x, y \in M$.  Then there exist $v, \tilde{v}, u,
\tilde{u}, v_1, \tilde{v}_1$, $ u_1, \tilde{u}_1 \in M$, $s,i,j, r_1, j_1
\in \Z_{\geq0}$ such that
\begin{equation}%
\label{eq:E}
\left \{ \begin{array}{c}%
E(u,\tilde{u},v, \tilde{v}, x, y, j,i,s)\\
E(u_1,\tilde{u}_1,v_1, \tilde{v}_1, x+1, y+1, j_1,r_1,s)%
\end{array}
\right .%
\end{equation}%
hold if and only if $y=x^{p^s}$.
\end{proposition}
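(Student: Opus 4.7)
The plan is to establish the biconditional directly.

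\textbf{Sufficiency.} Suppose $y=x^{p^s}$. Set $i=j=r_1=j_1=s$, define $u,\tilde u,u_1,\tilde u_1$ by \eqref{eq:final3}--\eqref{eq:final4} and their analogs with $x+1$ in place of $x$, and let $v=u^{p^s}$, $\tilde v=\tilde u^{p^s}$, $v_1=u_1^{p^s}$, $\tilde v_1=\tilde u_1^{p^s}$. Because the Frobenius $z\mapsto z^p$ is a ring homomorphism in characteristic $p$, iterating it $s$ times on $u$ gives $u^{p^s}=(x^{p^{s+1}}+t^{p^s})/(x^{p^{s+1}}-t^{p^s})$, which equals $v$ since $y^p=x^{p^{s+1}}$. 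The remaining equations of both $E$-systems follow by the same computation, using $(x+1)^{p^s}=x^{p^s}+1=y+1$ for the second system.

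\textbf{Necessity.} Assume both $E$-systems hold. From $v=u^{p^i}$ and the explicit formulas, applying Frobenius to $u$ and cross-multiplying (using $p>2$, so $2$ is a unit) yields
\begin{equation*}
y^p=x^{p^{i+1}}t^{p^s-p^i}.
\end{equation*}
Similarly, $\tilde v=\tilde u^{p^j}$, which involves $t^{-1}$ and $t^{-p^s}$, gives $y^p=x^{p^{j+1}}t^{p^j-p^s}$. Eliminating $y^p$ between these produces the key relation
\begin{equation*}
x^{p(p^i-p^j)}=t^{\,p^i+p^j-2p^s}.\qquad(\star)
\end{equation*}
If $i=j$, the exponent on the right becomes $2(p^i-p^s)$, which (since $t$ has nontrivial divisor in $M$) forces $i=s$; then $y^p=x^{p^{s+1}}$, and uniqueness of $p$-th roots in characteristic $p$ gives $y=x^{p^s}$. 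If $i\neq j$, a $p$-adic valuation check on integers shows that the equation $p^i+p^j=2p^s$ with $p$ odd forces $i=j=s$, contradicting $i\neq j$; hence both exponents in $(\star)$ are nonzero. Since the zeros and poles of $t$ in $M$ are simple by the standing assumption, $(\star)$ forces $\mathrm{div}(x)=\alpha\cdot\mathrm{div}(t)$ for some nonzero integer $\alpha$, whence $x=ct^{\alpha}$ with $c\in F^{\times}$.

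Running the identical analysis on the second $E$-system, using $(y+1)^p=y^p+1$ and $(x+1)^{p^k}=x^{p^k}+1$, yields either $y+1=(x+1)^{p^s}$ (giving $y=x^{p^s}$ directly) or $x+1=c't^{\beta}$ with $c'\in F^{\times}$ and $\beta\neq0$. \textbf{The main obstacle} is eliminating the scenario in which both systems produce these ``degenerate'' conclusions simultaneously. If that happened, one would have $ct^{\alpha}+1=c't^{\beta}$ in $F(t)$ with $\alpha,\beta\neq0$, which is impossible because $t$ is transcendental over $F$: comparing coefficients of Laurent polynomials, the constant term $1$ on the left cannot be matched by any nonzero-degree monomial on either side. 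This rules out the bad case and confirms $y=x^{p^s}$. The role of coupling the original system at $(x,y)$ with its translate at $(x+1,y+1)$ is precisely to prevent $x$ and $x+1$ from both being $F^{\times}$-multiples of integer powers of $t$.
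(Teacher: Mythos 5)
Your proof is correct, and it takes a genuinely different route from the one in the paper (cited from [Sh34], Prop.\ 8.4.8, and reproduced in a comment in the source). Both derive the same pair of ``star'' relations from the two $E$-systems — yours is the $p$-th power of the paper's $x^{p^j-p^i}=t^{2p^{s-1}-p^{j-1}-p^{i-1}}$ and $(x+1)^{p^{j_1}-p^{r_1}}=t^{2p^{s-1}-p^{j_1-1}-p^{r_1-1}}$. After that, the methods diverge. The paper's case analysis is on the sign of the $t$-exponent: if it is nonzero, $x$ acquires a zero at $\pp$ and a pole at $\qq$ (or vice versa), forcing $x+1$ to have a pole at $\qq$ and hence a zero at $\pp$, impossible since $\ord_\pp(x)>0$ makes $\ord_\pp(x+1)=0$. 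That argument is purely local at the two distinguished primes. Your case analysis is on whether $i=j$ (resp.\ $j_1=r_1$): you isolate the exceptional index coincidence with a clean $p$-adic valuation lemma ($p^i+p^j=2p^s$ and $p$ odd imply $i=j=s$), and in the non-exceptional case you pass to a global divisor identity $\mathrm{div}(x)^{a}=\mathrm{div}(t)^{b}$ with $a,b\neq 0$, use simplicity of the zeros and poles of $t$ to conclude $x=ct^{\alpha}$, and rule out the doubly-degenerate scenario by the Laurent-polynomial observation that $ct^{\alpha}+1=c't^{\beta}$ is impossible in $F(t)$ for $\alpha,\beta\neq 0$. The two proofs have the same backbone (use the translate $(x+1,y+1)$ to prevent $x$ from being a monomial in $t$), but your version is more explicit about the role of the index equalities and handles the ``exponent equals zero'' subcase up front via the integer identity, whereas the paper folds it implicitly into the trichotomy on the sign of the exponent.

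One small stylistic remark: when you assert that $(\star)$ forces $\mathrm{div}(x)=\alpha\,\mathrm{div}(t)$, it is worth saying explicitly that since $\ord_\pp(t)=\pm 1$ at each zero/pole of $t$ and $\ord_\pp(x)\in\Z$, the relation $a\,\ord_\pp(x)=b\,\ord_\pp(t)$ already forces $a\mid b$, so $\alpha=b/a$ is automatically a (nonzero) integer; and that $x$ cannot be constant in this case since $b\neq 0$. These are implicit in what you wrote and do not affect correctness, but spelling them out would make the divisor step airtight.
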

\comment{
\begin{proof}%
  Suppose (\ref{eq:E}) is satisfied over $K$.  Then using the fact
  that $E(u,\tilde{u},v, \tilde{v}, x, y, j,i,s)$ holds, from
  (\ref{eq:final1}), (\ref{eq:final3}) and (\ref{eq:final5}) we obtain
\[%
\frac{x^{p^{i+1}}-t^{p^i}}{x^{p^{i+1}}+t^{p^i}}=\frac{y^p-t^{p^s}}{y^p+t^{p^s}}
\]%
and
\[%
y=x^{p^i}t^{p^{s-1}-p^{i-1}}.
\]%
Similarly, from (\ref{eq:final2}), (\ref{eq:final4}) and (\ref{eq:final6})
\[%
y=x^{p^j}t^{-p^{s-1}+p^{j-1}}.%
\]%
Thus, $x^{p^j-p^i}=t^{2p^{s-1}-p^{j-1}-p^{i-1}}$.  From
$E(u_1,\tilde{u}_1,v_1, \tilde{v}_1, x+1, y+1, j_1,r_1,s)$ we
similarly conclude
\[%
(y+1)=(x+1)^{p^{r_1}}t^{p^{s-1}-p^{r_1-1}},
\]%
and
\[%
(x+1)^{p^{j_1}-p^{r_1}}=t^{2p^{s-1}-p^{j_1-1}-p^{r_1-1}}.
\]%

If $x$ is a constant, then $s=i=s=j_1=r_1$ and $y=x^{p^s}$. Suppose
$x$ is not a constant. If $2p^{s-1}-p^{j-1}-p^{i-1} >0$, then $x$ has
a zero at $\mte{p}$ and a pole at $\mte{q}$. Further, we also conclude
that $x+1$ has a pole at $\mte{q}$, $2p^{s-1}-p^{j_1-1}-p^{r_1-1} >0$
and $x+1$ has a zero at $\mte{p}$, which is impossible. We can
similarly rule out the case of $2p^{s-1}-p^{j-1}-p^{i-1} <0$. Thus,
$2p^{s-1}-p^{j-1}-p^{i-1} =0$, $s=i=s=j_1=r_1$, and $y=x^{p^s}$. On
the other hand if $y = x^{p^s}$ and we set $s=i=s=j_1=r_1$ we can
certainly find $v, \tilde{v}, u, \tilde{u}, v_1, \tilde{v}_1, u_1,
\tilde{u}_1 \in K$ to satisfy (\ref{eq:E}).
\end{proof}%
}
The following propositions treat the characteristic 2 case.%
\begin{lemma}[\cite{Sh34}, Proposition 8.4.9]
\label{le:p2}%
Let $p=2$.  Then for $x, y=\tilde{y}^2 \in M, j,r,s \in \Z_{\geq
  0}\setminus\{0\}, u,\tilde{u} \in M$ there exist $v, \tilde{v} \in
M$ such that %
\begin{equation}%
\label{eq:E2}%
E2(u,\tilde{u},v, \tilde{v}, x, y,j,r,s)\\%
\end{equation}%
holds if and only if there exist $v_1, \tilde{v}_1 \in M$ such that
\begin{equation}%
\label{eq:E2.1}%
E2(u_1,\tilde{u}_1,v_1, \tilde{v}_1, x, \tilde{y},j-1,r-1,s-1)\\%
\end{equation}%
holds.
\end{lemma}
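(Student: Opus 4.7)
The plan is to exploit that in characteristic $2$ the Frobenius $z\mapsto z^2$ is an injective ring homomorphism, so squaring is additive and, when square roots exist in $M$, they are unique. This is exactly the right tool because the exponents appearing in $E2$ are all powers of $2$, and the step from level $s$ down to level $s-1$ (together with $r\mapsto r-1$, $j\mapsto j-1$) is exactly the operation of taking a square root. Note that the defining formulas \eqref{eq:final32}--\eqref{eq:final42} for $u,\tilde u$ in terms of $x$ do not depend on $r,j,s$, so we will simply set $u_1=u$ and $\tilde u_1=\tilde u$ throughout; the reduction is really about $v$ and $\tilde v$.

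For the forward direction, assume $E2(u,\tilde u,v,\tilde v,x,y,j,r,s)$ holds with $y=\tilde y^{\,2}$. Since $r,j\geq 1$, we have $v=u^{2^r}=(u^{2^{r-1}})^2$ and $\tilde v=\tilde u^{2^j}=(\tilde u^{2^{j-1}})^2$, so I set $v_1:=u^{2^{r-1}}$ and $\tilde v_1:=\tilde u^{2^{j-1}}$. Then \eqref{eq:final12}--\eqref{eq:final22} (with $r-1,j-1$) hold by construction, and \eqref{eq:final32}--\eqref{eq:final42} are inherited from the original system. It remains to verify \eqref{eq:final52}--\eqref{eq:final62}. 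Using that squaring is additive in characteristic $2$ and that $\tilde y^{\,2}=y$:
\[
\left(\frac{\tilde y^{\,2}+t^{2^{s}}+t^{2^{s-1}}}{\tilde y^{\,2}+t^{2^{s-1}}}\right)^{\!2}
=\frac{\tilde y^{\,4}+t^{2^{s+1}}+t^{2^s}}{\tilde y^{\,4}+t^{2^s}}
=\frac{y^{2}+t^{2^{s+1}}+t^{2^s}}{y^{2}+t^{2^s}}=v=v_1^{\,2}.
\]
Since characteristic $2$ guarantees a unique square root, $v_1$ must equal the left-hand fraction, establishing \eqref{eq:final52} at level $s-1$. An identical computation with $t$ replaced by $t^{-1}$ gives \eqref{eq:final62} for $\tilde v_1$.

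For the reverse direction, assume $E2(u_1,\tilde u_1,v_1,\tilde v_1,x,\tilde y,j-1,r-1,s-1)$ holds. Set $u:=u_1$, $\tilde u:=\tilde u_1$, $v:=v_1^{\,2}$, and $\tilde v:=\tilde v_1^{\,2}$. Then $v=v_1^{\,2}=u_1^{2\cdot 2^{r-1}}=u^{2^r}$, giving \eqref{eq:final12}, and likewise $\tilde v=\tilde u^{2^j}$; the $x$-formulas \eqref{eq:final32}--\eqref{eq:final42} are unchanged. For \eqref{eq:final52}, squaring $v_1=\frac{\tilde y^{\,2}+t^{2^s}+t^{2^{s-1}}}{\tilde y^{\,2}+t^{2^{s-1}}}$ in characteristic $2$ and using $y=\tilde y^{\,2}$ yields exactly $\frac{y^{2}+t^{2^{s+1}}+t^{2^s}}{y^{2}+t^{2^s}}=v$, and the symmetric computation handles \eqref{eq:final62}.

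There is essentially no serious obstacle: the content of the lemma is the characteristic-$2$ identity $(a+b+c)^2=a^2+b^2+c^2$ combined with uniqueness of square roots. The only bookkeeping point is that we need $r,j,s\geq 1$ (so that $r-1,j-1,s-1$ make sense as non-negative integers and $v,\tilde v$ are genuine squares), and this is exactly the hypothesis $j,r,s\in\Z_{\geq 0}\setminus\{0\}$; the square $y=\tilde y^{\,2}$ is supplied in the hypothesis so that the numerator/denominator of $v_1$ actually lie in $M$.
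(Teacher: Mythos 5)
Your proof is correct and takes essentially the same approach as the paper's: both rely on the characteristic-$2$ Frobenius identity together with uniqueness of square roots to pass between $v=v_1^{\,2}$, $v_1=u^{2^{r-1}}$, and the level-$(s-1)$ formula in $\tilde y$. The only superficial difference is the order in which the two descriptions of $v_1$ are established; the paper defines $v_1$ by the $\tilde y$-formula and then deduces $v_1=u^{2^{r-1}}$, while you proceed in the opposite order, which is an immaterial reorganization.
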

\comment{
\begin{proof}%
Suppose that for some  $x, y=\tilde{y}^2 \in M, j,r,s \in \Z_{>0}, u,\tilde{u} \in M$, (\ref{eq:E2})
holds. Then from (\ref{eq:final52}) we derive
\begin{equation}%
\label{eq:vsquared}%
v=\frac{\tilde{y}^4+t^{2^{s+1}}+ t^{2^s}}{\tilde{y}^4+t^{2^s}}=\left(\frac{\tilde{y}^2+t^{2^s}+
t^{2^{s-1}}}{\tilde{y}^2+t^{2^{s-1}}}\right )^2.%
\end{equation}%
Thus, if we set %
\begin{equation}
\label{eq:E2.2}
v_1 = \frac{\tilde{y}^2+t^{2^s}+t^{2^{s-1}}}{\tilde{y}^2+t^{2^{s-1}}},%
\end{equation}%
we conclude that $v_1=u^{2^{j-1}}$.  Similarly, if we set
\begin{equation}%
\label{eq:E2.3}
\tilde{v}_1 = \frac{\tilde{y}^2+t^{-2^s}+t^{-2^{s-1}}}{\tilde{y}^2+t^{-2^{s-1}}},%
\end{equation}%
the $\tilde{v}_1=\tilde{u}^{2^{r-1}}$.  Thus, (\ref{eq:E2.1}) holds.  On the other hand, it is clear that if for
some $x, y=\tilde{y}^2 \in M, j,r,s \in \Z_{>0}, u,\tilde{u} \in M$ there exist $v_1, \tilde{v}_1 \in K$
such that (\ref{eq:E2.1}) holds, then by setting $v=v_1^2, \tilde{v}=\tilde{v}_1^2$ we will insure that
(\ref{eq:E2}) holds.
\end{proof}
}
\begin{proposition}[\cite{Sh34}, Proposition 8.4.10 and \cite{Eis},
  Theorem 3.1]
\label{prop:pithx2}%
Let $p=2$. Then for $x, y \in M, s \in \Z_{\geq 0}$ there exist $j,r \in \Z_{\geq 0}$, $u,\tilde{u},v, \tilde{v} \in M$ such that
(\ref{eq:E2}) holds if and only if $y=x^{2^s}$.%
\end{proposition}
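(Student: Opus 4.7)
\emph{Forward direction.} If $y = x^{2^s}$, set $j = r = s$. In characteristic $2$, Frobenius gives
\[ u^{2^s} = \frac{(x^2+t^2+t)^{2^s}}{(x^2+t)^{2^s}} = \frac{x^{2^{s+1}}+t^{2^{s+1}}+t^{2^s}}{x^{2^{s+1}}+t^{2^s}} = \frac{y^2+t^{2^{s+1}}+t^{2^s}}{y^2+t^{2^s}} = v, \]
and a symmetric computation yields $\tilde u^{2^s} = \tilde v$, so \eqref{eq:E2} is satisfied.

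\emph{Converse direction, base case $s = 0$.} From $v = u^{2^r}$, the characteristic-$2$ identity $v - 1 = (u-1)^{2^r}$ reads $t^2/(y^2+t) = t^{2^{r+1}}/(x^{2^{r+1}}+t^{2^r})$. Cross-multiplying and rearranging in characteristic $2$ gives $t^{2^{r+1}} y^2 = t^2 x^{2^{r+1}} + t^{2^r+2} + t^{2^{r+1}+1}$; if $r \ge 1$, dividing by $t^{2^{r+1}}$ expresses $y^2$ as a sum of squares in $M$ plus the non-square $t$, an impossibility in $M$. Hence $r = 0$, and symmetrically $j = 0$. Then $v = u$ simplifies in characteristic $2$ to $(x+y)^2 t^2 = 0$, so $y = x$.

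\emph{Converse direction, inductive step $s \ge 1$.} My plan is to force $r = j = s$, from which the expression $(\dagger_1)$ below yields $y = x^{2^s}$ directly. From $v = u^{2^r}$ and the characteristic-$2$ identity $v - 1 = (u-1)^{2^r}$, clearing denominators and applying Frobenius produces
\[ (y t^{2^r} + x^{2^r} t^{2^s})^2 = t^{2^r + 2^s}(t^{2^s} + t^{2^r}). \]
If $r = 0$, the right side contains $t$ to an odd power, contradicting that squares in $M$ cannot have odd $t$-exponents in their factorization (equivalently, that $t$ is not a square in $M$); hence $r \ge 1$, and symmetrically $j \ge 1$. For $r, s \ge 1$, the characteristic-$2$ identities $t^{2^s} + t^{2^r} = (t^{2^{s-1}} + t^{2^{r-1}})^2$ and $t^{2^r + 2^s} = (t^{2^{r-1} + 2^{s-1}})^2$ let me take square roots and solve:
\[ y = t^{2^s - 2^{r-1}} + t^{2^{s-1}} + t^{2^s - 2^r} x^{2^r}, \tag{$\dagger_1$} \]
with a symmetric expression $(\dagger_2)$ derived from $\tilde v = \tilde u^{2^j}$. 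For $r = s$, the cancellation $t^{2^{s-1}} + t^{2^{s-1}} = 0$ in $(\dagger_1)$ immediately yields $y = x^{2^s}$.

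The hard part I expect is ruling out $r \neq s$. Equating $(\dagger_1)$ with $(\dagger_2)$ gives a polynomial identity in $x$ over $F(t)$; if $r < s$, comparing the orders of both sides at a prime of $M$ above a pole of $t$ (which is simple and unramified by the choice of $t$) produces a contradiction, because the identity would force a rational function of $t$ whose $F[t]$-factorization has factors of odd multiplicity to be a square in $M$, which by separability of $M/F(t)$ would require it already to be a square in $F(t)$, impossible. The case $r > s$ and the analogous constraints on $j$ are handled by the same valuation analysis. Alternatively, once one has shown $y$ is a square in $M$ and that $r, j \ge 1$, Lemma~\ref{le:p2} provides the descent to $(j-1, r-1, s-1)$ with $\tilde y^2 = y$, and the induction hypothesis gives $\tilde y = x^{2^{s-1}}$, whence $y = x^{2^s}$.
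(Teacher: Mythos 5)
Your overall strategy matches the one in \cite{Sh34} and \cite{Eis}: handle the forward direction by Frobenius, and for the converse use the identity $v-1 = (u-1)^{2^r}$ to get an algebraic relation, take square roots, and descend via Lemma~\ref{le:p2}. The forward direction and base case $s=0$ are essentially correct (your computation that $r\ge 1$ forces $t$ to be a square, and then $v=u$ forces $y=x$, checks out). But there is a genuine gap in the inductive step, together with an incorrect subsidiary claim.

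First, the minor error. You claim that when $r=0$ (and $s\ge 1$) the right side $t^{2^r+2^s}(t^{2^s}+t^{2^r})$ ``contains $t$ to an odd power, equivalently $t$ is not a square.'' In fact $t^{1+2^s}(t^{2^s}+t) = t^{2+2^s}(t^{2^s-1}+1)$, so $t$ itself occurs to the even power $2+2^s$. The correct obstruction is that $t^{2^s-1}+1$ is a separable polynomial of odd degree $2^s-1$ (hence not a square in $F[t]$, hence not a square in $F(t)$, hence---by separability of $M/F(t)$---not a square in $M$). The conclusion $r\ge 1$ is right, but the stated reason is not.

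The more serious gap is in ruling out $r \ne s$ (and $j \ne s$). You identify this correctly as ``the hard part'' but then only gesture at a valuation argument (``comparing orders of both sides at a prime above a pole of $t$ produces a contradiction'') without carrying it out; $(\dagger_1)=(\dagger_2)$ involves the unknown $x$, so the order computation is not immediate. The ``alternative'' route---descend via Lemma~\ref{le:p2} once $y$ is a square---only applies when $y$ actually is a square. But from $(\dagger_1)$, when $r=1$ and $s\ge 2$ the term $t^{2^s-1}$ has odd exponent while the other two terms are squares, so $y$ is \emph{not} a square in $M$ and no descent is available; similarly when $s=1$ and $r\ge 2$ the term $t^{2^{s-1}}=t$ is an obstruction. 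The published proof handles exactly these residual cases (namely $r=j=1,\ s>1$ and $s=1,\ r,j>1$) by eliminating $y$ between $(\dagger_1)$ and $(\dagger_2)$ and showing that $t+t^{-1}$ (respectively $t^{2^s-1}+t^{1-2^s}$) would have to be a square in $M$, which is impossible because these elements have odd order at the unramified zero or pole of $t$. Your proposal leaves these sub-cases unresolved, and since they are precisely where the real work happens, the argument as written does not close.
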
%
\comment{
\begin{proof}%
 Since $E2(u,\tilde{u},v, \tilde{v}, x,y,j,r,s)$ holds, from (\ref{eq:final12}), (\ref{eq:final32}) and (\ref{eq:final52}) we conclude that
\begin{equation}%
\label{eq:final82}%
y^2=(x^{2^{r+1}}t^{2^{s+1}} + t^{2^r+2^{s+1}}+t^{2^r+2^{s+1}})t^{-2^{r+1}}.%
\end{equation}%
Similarly, from (\ref{eq:final12}), (\ref{eq:final32}) and (\ref{eq:final52}) we conclude that
\begin{equation}%
\label{eq:final92}%
y^2=(x^{2^{j+1}}t^{-2^{s+1}} + t^{-2^j-2^{s+1}}+t^{-2^j-2^{s+1}})t^{2^{j+1}}.%
\end{equation}
Thus to show that (\ref{eq:E2}) implies $y=x^{2^s}$ it is enough to show that $s=r$ or $s=j$.

By Lemma \ref{le:p2} it is enough to consider two cases: one of $r, j,
s$ is equal to zero or $y$ is not a square. First suppose that
$s=0$. Then $v=\frac{y^2+t^2+t}{y^2+t}$ and $v$ is not a square since
$\frac{dv}{dt}=\frac{t^2}{y^4+t^2}\not=0$.  But $v = u^{2^r}$ and
therefore $r=0=s$.

Suppose now that $r=0$.  Then $v=u$ and therefore $v$ is not a square
by a an argument similar to the one above.  On the other hand, if $s
>0$ the $v$ is a square.  Thus, to avoid contradiction we must
conclude that $s=0$.

Finally if $j=0$, then $\tilde{v}=\tilde{u}$ is not a square and $s=0$
again.  Thus we have reduced the problem to the case where $y$ is not
a square and $r, s, j$ are positive.  In this case, from
(\ref{eq:final82}), taking the square root of both sides of the
equation we obtain
\begin{equation}%
\label{eq:final82-1}%
y=(x^{2^r}t^{2^s} + t^{2^{r-1}+2^s}+t^{2^r+2^{s-1}})t^{-2^r}%
\end{equation}
implying that, unless $r=1, s>1 $ or $s=1, r>1$, $y$ is a square.

Similarly, from (\ref{eq:final92}), taking the square root we obtain,
\begin{equation}%
\label{eq:final92-1}%
y=(x^{2^j}t^{-2^s} + t^{-2^{j-1}-2^s}+t^{-2^j-2^{s-1}})t^{2^j},%
\end{equation}%
 implying that, unless $j=1, s>1$ or $s=1, j>1$, $y$ is a square again.

Thus, either $s=1, r > 1, j>1$ or $r=j=1, s>1$, or $y$ is a square.
First suppose $s=1$.  Then  eliminating $y$ from (\ref{eq:final82-1}) and (\ref{eq:final92-1}) and
substituting 1 for $s$ yields
\[%
(x^{2^r}t^2 + t^{2^{r-1}+2}+t^{2^r+1})t^{-2^r}=(x^{2^j}t^{-2} + t^{-2^{j-1}-2}+t^{-2^j-1})t^{2^j},
\]%
 \[%
x^{2^r}t^{2-2^r} + t^{-2^{r-1}+2}+t=x^{2^j}t^{2^j-2} + t^{2^{j-1}-2}+t^{-1},
\]%
Then $t+\frac{1}{t}$ is a square in $K$.  This is impossible, since this element is not a square in
$C_K(t)$ and the extension $K/C_K(t)$ is separable.

Suppose now that $r=j=1, s>1$.    Eliminating $y$ from (\ref{eq:final82-1}) and (\ref{eq:final92-1}) and
substituting 1 for $j$ and $r$ produces
\begin{equation}%
\label{eq:final102}%
t^{2^s-2}x^2 +t^{2-2^s}x^2 = t^{2^s-1} + t^{2^{s-1}} +  t^{1-2^s}+ t^{-2^{s-1}}.%
\end{equation} %
This equation implies that $t^{2^s-1} + t^{1-2^s}$ is a square in $K$.   But%
\[%
\ord_{\mte{p}}(t^{2^s-1} + t^{1-2^s})=\ord_{\mte{p}}t^{1-2^s}=1-2^s%
\]%
is odd and $\pp$ is not ramified in the extension $K/C_K(t)$. Thus we have a contradiction again.

Hence, if $y$ is not a square, $r=s=j=0$.  Thus, we have shown that (\ref{eq:E2}) implies $y=x^{2^s}$.  Conversely,
if $y=x^{2^s}$, we can set $j=r=s$ and (\ref{eq:E2}) will be satisfied.
\end{proof}
}

\subsection{Adjusting for arbitrary constant fields}
\label{arbit}
We will now describe how to adjust the arguments above to take care of
the case where the field of constants is not necessarily algebraically
closed.  So let $K$ be an arbitrary function field of positive
characteristic.  Let $M$ be the field obtained from $K$ by adjoining
the algebraic closure of the constant field of $K$ and as above denote
the constant field of $M$ by $F$.  Let $t \in M$ be a non-constant
element such that all of its poles and zeros are simple.  (As we have
seen above, such an element always exists.)  The element $t$ and all
other elements of $M$ are algebraic over $K$.  Given such an
element $t$, compute $C_5$ and construct $C(F)$.  Let
$\hat{K}=K(t,C(F))$. Then $\hat K/K$ is a finite extension.
Now all the equations discussed above have their coefficients in $\hat
K$ and can be satisfied over $\hat K$.  As far as solutions to these
equation go, we can always consider solutions in $M$ to make sure we
have only the solutions we want.  E.g., if we know that 
\eqref{eq:alreadydown0} and \eqref{eq:alreadydown1} imply that $w
=t^{p^{as}}$ when we are looking at all possible solutions in $M$,
then this is certainly true of $\hat K$.  Thus $P(\hat K)$ is
existentially definable over $\hat K$.  Finally, we appeal to
Corollary \ref{fixed}, to conclude that $P(K)$ is existentially
definable over $K$.

\section{Subsets of a field integral at a prime}
\label{sec:integralsubsets}
\setcounter{equation}{0} In this section we construct a Diophantine
definition of the set $\mbox{INT}( K, \ttt, t)$ for some prime $\ttt$
and some non-constant element $t$ of our function field.
Unfortunately, we have to modify somewhat the assumptions and notation for
this section.  The new notation and assumptions can be found
below. Also, as in the section on $p$-th powers, our initial
assumptions will include some conditions on the field, which might not
be true of the given field.  We will show however, that they can be
made true in a finite extension of the given field.
\begin{notationassumption}
\label{intnot}
$\left. \right.$
\begin{itemize}
\item $p, q$ will denote two not necessarily distinct rational primes.
\item $\F_p$ will denote a finite field of $p$ elements.
\item $K$ will denote a function field over a field of constants $C$
  of characteristic $p >0$.
\item  $t \in K$ will denote an element of the field such that $K/C(t)$ is finite and separable.
\item $C_0$ will denote the algebraic closure of $\F_p$ in $C$, and $K_0$ will denote the algebraic closure of $C_0(t)$ in $K$.  If $q \not= p$, then assume that $C_0$ contains the primitive $q$-th root of unity.
\item Let $\gamma \in K$ generate $K$ over $C(t)$ and let $\gamma_0$ generate $K_0$ over $C_0(t)$.
\item For  some $a \in C$ algebraic over $\F_p$, $K$ and $C$ will not contain any root of a polynomial 
\begin{equation}
\label{notp}
T^q-a,
\end{equation}
 in the case $q \not=p$, and any root of 
 \begin{equation}
 \label{q=p}
 T^p-T+a,
 \end{equation}
  in the case $q=p$.  Let $\alpha$ be a root of \eqref{notp}, if $q\not=p$ and let $\alpha$ be the root of \eqref{q=p} otherwise. 
\item Assume all the poles and zeros of $t$ in $K$ and $K_0$ are
  simple. In particular, $\Pp$, the zero of $t$ in $C(t)$ or $C_0(t)$
  is not ramified in $K/C(t)$ or $K_0/C_0(t)$.  Note also that if
  $\pp$ (or $\pp_0$) is a prime of $K$ ($K_0$ respectively) lying
  above $\Pp$, then $\ord_{\pp}t=1$ ($\ord_{\pp_0}t=1$ respectively).
\item Denote by $\Qq$ the pole divisor of $t$ in $K$ or $K_0$.
\item If $q \not = p$, let $b \in C_0\setminus\{0\}$ be such that for some $c \in C_0$ we have that $c^q=b$.
\item  For $w  \in K$, let $h_w=t^{-1}w^q +t^{-q}$.
\item If $p=q$, let $\beta_w$ be the root in $\tilde K$, the algebraic
  closure of $K$, of $\displaystyle T^p-T-\frac{1}{h_w}$. If
  $p\not=q$, set $\beta_w$ to be the root of $\displaystyle
  T^q-(\frac{1}{h_w}+1)$.
\item Let $\delta \in \tilde K$ be a root of the polynomial $T^p-T+t$,
  if $q=p$ and let $\alpha$ be a root of the polynomial $T^q-(t+1)$, if
  $q \not =p$.
\item Let $N=K(\delta)$ and let $N_0=K_0(\delta)$.
\end{itemize}
The diagram below shows  the field extensions we will consider in this section:
\begin{center}
\xymatrix{{N_0(\beta_w,\alpha)}\ar[r]&{N(\beta_w,\alpha)}\\
{ N_0(\beta_w)}\ar[r]\ar[u] &{N(\beta_w)}\ar[u]\\
{N_0=K_0(\delta)}\ar[u]\ar[r]&{N=K(\delta)}\ar[u]\\
{K_0=C_0(t,\gamma_0)}\ar[u]\ar[r]&{K=C(t,\gamma)}\ar[u]\\
{C_0(t)}\ar[u]\ar[r]&{C(t)}\ar[u]\\
{\F_p(t)}\ar[u]\ar[ru]&}

\end{center}
\end{notationassumption}
We start with some basic lemmas concerning function fields and local
fields.  The proofs of the facts in the first lemma can be found in
\cite[Ch.\ V, \S5, and Theorem 6.4]{LangAlgebra}.
\begin{lemma}
\label{basic}
The following statements are true.
\begin{itemize}
\item If $L$ is algebraic over a finite field of characteristic $p >0$
  and is not algebraically closed, then it has an extension of prime
  degree $q$.  Further, if $q \not =p$, then for some $a \in L$, the
  polynomial $X^q-a$ is irreducible and if $q=p$, then for some $a \in
  L$, the polynomial $X^p-X-a$ is irreducible.
\item All the solutions to $X^p-X-a=0$ in the algebraic closure of $L$
  can be written in the form $\alpha + i, i=0,\ldots,p-1$, where
  $\alpha$ is any root of the equation.
\item If $L$ is algebraic over a finite field of characteristic $p >0$ and is not algebraically closed, then any finite extension of $L$ is also not algebraically closed.
\end{itemize}
\end{lemma}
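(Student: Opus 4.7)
The plan is to use the classical Steinitz/supernatural-number encoding of subfields of $\overline{\F_p}$. For $L$ with $\F_p \subset L \subset \overline{\F_p}$ I set $s_L = \mathrm{lcm}\{n : \F_{p^n} \subset L\}$, a supernatural number; then $L$ is algebraically closed iff $v_q(s_L) = \infty$ for every prime $q$, one has $[L \cdot \F_{p^m} : L] = m/\gcd(m, s_L)$ for every positive integer $m$, and $\mu_m \subset L$ iff the multiplicative order $\mathrm{ord}_m(p)$ divides $s_L$. With this machinery the third bullet is immediate: a finite extension $L'/L$ inside $\overline{\F_p}$ alters $s_L$ in only finitely many prime exponents and by only finite amounts, so any prime $q$ with $v_q(s_L) < \infty$ still satisfies $v_q(s_{L'}) < \infty$, keeping $L'$ non-algebraically-closed. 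The second bullet is a Frobenius one-liner: for $\alpha$ any root of $X^p - X - a$ and any $i \in \F_p$, $(\alpha + i)^p - (\alpha + i) - a = (\alpha^p - \alpha - a) + (i^p - i) = 0$, exhibiting $p$ distinct roots $\alpha, \alpha + 1, \dots, \alpha + (p-1)$, which must be all of them by degree.

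For the first bullet I would choose the prime $q$ by a finite descent. Since $L \ne \overline{\F_p}$, some prime $q_0$ satisfies $v_{q_0}(s_L) < \infty$. If $q_0 = p$, I would invoke Artin--Schreier theory: the additive map $\wp(x) = x^p - x$ on $L$ has kernel $\F_p$, and since $L$ admits the degree-$p$ extension $L \cdot \F_{p^{p^{v_p(s_L)+1}}}$, $\wp$ is not surjective, so any $a \in L \setminus \wp(L)$ makes $X^p - X - a$ irreducible. If $q_0 \ne p$ and $\mu_{q_0} \subset L$, Kummer theory applies: $L$ has a degree-$q_0$ extension, so $L^{\times}/(L^{\times})^{q_0} \ne 0$, and any non-$q_0$-th-power $a \in L^{\times}$ makes $X^{q_0} - a$ irreducible.

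The hard part is the remaining case $q_0 \ne p$ with $\mu_{q_0} \not\subset L$. Here $d = \mathrm{ord}_{q_0}(p)$ does not divide $s_L$, so some prime $r$ satisfies $v_r(d) > v_r(s_L)$; by Fermat $d \mid q_0 - 1$, giving $r \le q_0 - 1$ and $v_r(s_L) < \infty$. I would then replace $q_0$ by $r$ and iterate. The descent produces a strictly decreasing sequence of primes, hence terminates; at termination we hit either $p$ (use Artin--Schreier) or a prime $q \ne p$ with $\mu_q \subset L$ (use Kummer). The main obstacle is verifying that this descent always terminates at a usable prime, which is why each non-terminal step is set up to strictly decrease the prime bound; the floor case $r = 2$ is handled by $\mu_2 = \{\pm 1\} \subset \F_p \subset L$ when $p$ is odd, and by Artin--Schreier when $p = 2$.
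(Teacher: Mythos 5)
Your proposal is correct, and it is more than the paper offers: the paper proves none of these statements, instead citing Lang's \emph{Algebra} (Ch.~V, \S 5 and Theorem~6.4) for them. Bullets two and three are routine and you dispatch them cleanly; the Steinitz (supernatural number) bookkeeping for the third bullet is the right tool and your verification that a finite extension perturbs $s_L$ only finitely is correct. For the first bullet your three sub-cases are sound: Artin--Schreier when the usable prime is $p$ (nonsurjectivity of $\wp$ follows from the existence of a degree-$p$ extension, guaranteed by $v_p(s_L)<\infty$); Kummer when $q\ne p$ and $\mu_q\subset L$ (here $a\notin L^q$ is exactly the irreducibility criterion for $X^q-a$ when $q$ is prime); and the descent via a prime factor $r$ of $\mathrm{ord}_q(p)\mid q-1$ when $\mu_q\not\subset L$. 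Two remarks worth noting. First, the Kummer case genuinely requires $\mu_q\subset L$: if $\mu_q\not\subset L$ then for every $x\in L^\times$, $x$ lies in some $\F_{p^n}^\times$ with $q\nmid p^n-1$, so $x$ already has a $q$-th root in $\langle x\rangle\subset L$, hence $L^\times=(L^\times)^q$ and no $a$ works --- so your case split is not optional. Second, the descent can be collapsed into a single step: if $v_p(s_L)=\infty$, take $q$ to be the \emph{smallest} prime with $v_q(s_L)<\infty$; then every prime $r$ dividing $\mathrm{ord}_q(p)\mid q-1$ satisfies $r<q$, hence $v_r(s_L)=\infty$, so $\mathrm{ord}_q(p)\mid s_L$ and $\mu_q\subset L$ automatically. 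This makes the termination argument unnecessary and gives the same conclusion directly, but your iterated version is also correct.
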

\begin{lemma}
  Let $G$ be a field of positive characteristic $p$ and let $q$ be a
  prime number.  If $q \not = p$, assume $G$ contains a primitive
  $q$-th root of unity.  Let $\alpha$ be an element of the algebraic
  closure of $G$, either in $G$ or of degree $q$ over $G$.  Let
  $\alpha_j=\alpha +j, j=0,\ldots, p-1$, if $p=q$, and let
  $\alpha_j=\xi_q^j\alpha, j=0,\ldots, q-1$, if $q\not=p$.  Let
\begin{equation}
\label{polynorm0}
P(a_0,\ldots,a_{q-1})=\prod_{j=0}^{q-1}(a_0 +a_1\alpha_j + \ldots +a_{q-1}\alpha_j^{q-1}).
\end{equation}
In this case, if $[G(\alpha):G]=q$, then
$P(a_0,\ldots,a_{q-1})={\mathbf N}_{G(\alpha)/G}(a_0 +a_1\alpha +
\ldots +a_{q-1}\alpha^{q-1})$.  At the same time, if $\alpha \in G$,
then for any $y \in G$ the equation $P(X_0,\ldots,X_{q-1})=y$ has
solutions $x_0,\ldots,x_{q-1}\in G$.
\end{lemma}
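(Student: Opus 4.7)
The plan is to handle the two assertions separately, since each reduces to a standard structural fact about cyclic extensions of degree $q$. In both parts, the key observation is that the elements $\alpha_j$ are exactly the roots of the minimal polynomial of $\alpha$ over the prime subfield picture we have set up: in the Artin--Schreier case $q=p$ the polynomial is $T^p-T-a$ where $a=\alpha^p-\alpha$, and if $\beta$ is any other root, then $(\beta-\alpha)^p=\beta-\alpha$ forces $\beta-\alpha\in\F_p$; in the Kummer case $q\neq p$ the polynomial is $T^q-\alpha^q$, and since $G$ contains $\xi_q$ the other roots are $\xi_q^j\alpha$. In either case the complete list of roots is $\{\alpha_0,\ldots,\alpha_{q-1}\}$.

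For the first assertion, assume $[G(\alpha):G]=q$. Then $G(\alpha)/G$ is cyclic Galois of degree $q$, and by the previous paragraph the Galois conjugates of $\alpha$ are precisely $\alpha_0,\ldots,\alpha_{q-1}$. For any element $\beta=a_0+a_1\alpha+\cdots+a_{q-1}\alpha^{q-1}\in G(\alpha)$ with $a_i\in G$, its Galois conjugates are obtained by substituting each $\alpha_j$ for $\alpha$. Taking the product gives
\[
\mathbf{N}_{G(\alpha)/G}(\beta)=\prod_{j=0}^{q-1}\bigl(a_0+a_1\alpha_j+\cdots+a_{q-1}\alpha_j^{q-1}\bigr)=P(a_0,\ldots,a_{q-1}),
\]
which is the claimed equality.

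For the second assertion, suppose $\alpha\in G$ (and note that $\alpha\neq 0$ in the Kummer case, since in the context of Notation~\ref{intnot} the element $\alpha$ is a root of an irreducible $T^q-a$ with $a\neq 0$). Then the $\alpha_j$ lie in $G$ and are pairwise distinct. Consider the $G$-linear map $\Phi\colon G^q\to G^q$ defined by
\[
\Phi(a_0,\ldots,a_{q-1})=\bigl(L_0(a),\ldots,L_{q-1}(a)\bigr),\qquad L_j(a)=\sum_{i=0}^{q-1}a_i\alpha_j^i.
\]
Its matrix is the Vandermonde matrix $[\alpha_j^i]_{i,j}$, whose determinant $\prod_{i<j}(\alpha_j-\alpha_i)$ is nonzero because the $\alpha_j$ are distinct. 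Hence $\Phi$ is a bijection of $G^q$. Given $y\in G$, choose any $(b_0,\ldots,b_{q-1})\in G^q$ with $\prod_j b_j=y$ (for instance $b_0=y$, $b_j=1$ for $j\geq 1$) and set $(x_0,\ldots,x_{q-1})=\Phi^{-1}(b_0,\ldots,b_{q-1})$; then $P(x_0,\ldots,x_{q-1})=\prod_j L_j(x)=\prod_j b_j=y$.

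There is no real obstacle here: both parts are structurally forced once one identifies the Galois conjugates of $\alpha$ and recognizes that the map from coefficient tuples to tuples of linear-form values is Vandermonde; the only care needed is to note that $\alpha\neq 0$ in the Kummer case so that the $\alpha_j$ are genuinely distinct.
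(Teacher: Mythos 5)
Your proof is correct and takes essentially the same route as the paper: the key step is the Vandermonde linear system with right-hand side $(y,1,\ldots,1)$, which you solve via surjectivity of a linear map where the paper invokes Cramer's rule. You additionally spell out the norm assertion (which the paper declares routine) and flag that one needs $\alpha\neq 0$ in the Kummer case so that the $\alpha_j$ are distinct and the Vandermonde determinant is genuinely nonzero --- a small but real hypothesis the paper's proof leaves implicit.
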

\begin{proof}
  The only assertion of the lemma which requires an argument is the
  assertion that for any $y \in G$ the equation
  $P(X_0,\ldots,X_{q-1})=y$ has solutions $x_0,\ldots,x_{q-1}\in G$
  assuming $\alpha \in G$.  To see that consider the following linear
  system of equations in $a_0,\ldots, a_{q-1}$:
\begin{equation}
\label{sysinL}
\sum_{i=0}^{q-1}a_i\alpha_j^i=y_j, j=1,\ldots,q,
\end{equation}
where $y_1=y$ and $y_j=1, j=2,\ldots,q$.  Observe that the determinant
of the system is a Vandermonde determinant and thus is non-zero.
Hence the system has solutions.  By Cramer's rule, all
the solutions will be in $G$.
\end{proof}
\begin{lemma}
\label{lemma:notcongruent}
Let $G/H$ be a Galois extension of algebraic function fields of degree
$n$.  Let {\te p} be a prime of $H$ which does not split in $G$.  Let
$x \in H$ be such that ord$_{\ttt}x \not \equiv 0 \pmod n$.  Then $x$ is not a norm of an element of $G$.
\end{lemma}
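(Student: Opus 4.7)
The plan is to argue by contradiction using the standard valuation formula for the norm in a Galois extension. Suppose toward contradiction that there exists $y \in G$ with $N_{G/H}(y) = x$, and let $\mathfrak P$ be the unique prime of $G$ lying above $\ttt$ (which exists because $\ttt$ does not split). Since $G/H$ is Galois, $\Gal(G/H)$ acts transitively on the primes above $\ttt$; as there is only one, every $\sigma \in \Gal(G/H)$ sends $\mathfrak P$ to itself. Consequently $\ord_{\mathfrak P}(\sigma(y)) = \ord_{\sigma^{-1}\mathfrak P}(y) = \ord_{\mathfrak P}(y)$ for every $\sigma$.

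Writing $N_{G/H}(y) = \prod_{\sigma \in \Gal(G/H)} \sigma(y)$ and taking $\ord_{\mathfrak P}$ of both sides gives
\[
\ord_{\mathfrak P}(N_{G/H}(y)) \;=\; \sum_{\sigma} \ord_{\mathfrak P}(\sigma(y)) \;=\; n\,\ord_{\mathfrak P}(y).
\]
On the other hand, for any element $a \in H$ we have $\ord_{\mathfrak P}(a) = e(\mathfrak P/\ttt)\,\ord_\ttt(a)$, and in the inert case (which is how the hypothesis ``does not split'' is to be read, so that $e(\mathfrak P/\ttt) = 1$ and $f(\mathfrak P/\ttt) = n$), this restriction reads $\ord_{\mathfrak P}(a) = \ord_\ttt(a)$. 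Applying this to $a = N_{G/H}(y) = x$ yields
\[
\ord_\ttt(x) \;=\; \ord_{\mathfrak P}(N_{G/H}(y)) \;=\; n\,\ord_{\mathfrak P}(y),
\]
so $\ord_\ttt(x) \equiv 0 \pmod n$, contradicting the hypothesis.

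There is essentially no obstacle here, since the statement is a direct consequence of the transitive action of the Galois group on primes above $\ttt$ and the definition of the norm; the only subtle point is the interpretation of ``does not split,'' which must mean that $\ttt$ remains inert (one prime above with $e = 1$), for otherwise $\ord_\ttt(N_{G/H}(y))$ need only be divisible by $f(\mathfrak P/\ttt)$ rather than by $n$. Everything else is the standard computation $\ord_{\mathfrak P} \circ N_{G/H} = \sum_\sigma \ord_{\mathfrak P} \circ \sigma$, together with the fact that each $\sigma$ fixes the unique prime $\mathfrak P$.
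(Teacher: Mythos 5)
Your proof is correct and follows essentially the same route as the paper: since there is a unique prime $\mathfrak P$ above $\ttt$, the transitive Galois action fixes it, so every conjugate of $y$ has the same $\mathfrak P$-order, making $\ord_{\mathfrak P}(N_{G/H}(y))$ divisible by $n$, and then transferring back down to $\ttt$. You are right that the step converting $\ord_{\mathfrak P}$ to $\ord_\ttt$ silently requires $e(\mathfrak P/\ttt)=1$; the paper's own proof asserts $\ord_{\mathfrak T}(N_{G/H}(y))=\ord_\ttt(N_{G/H}(y))$ with exactly the same unstated hypothesis, and the lemma is applied only in a situation where $\ttt$ is inert (the residue field extension is nontrivial, ruling out ramification). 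So your explicit flagging of this point is a genuine, if minor, sharpening of the paper's exposition rather than a different argument.
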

\begin{proof}
  Let $y=y_1,\ldots, y_n \in G$ be all the conjugates of a $G$-element
  $y$ over $H$.  Let $\mathfrak T$ be the prime above $\ttt$ in $G$.
  In this case, $\ord_{\mathfrak T}y_i = \ord_{\mathfrak T}y_j$ for
  all $i,j = 1,\ldots,n$.  Therefore, $\ord_{\mathfrak T}{\mathbf
    N}_{G/H}(y)= \sum_{i=1}^n \ord_{\mathfrak T}y_i =n\ord_{\mathfrak
    T}y \equiv 0 \mod n$. At the same time we have that
  $\ord_{\mathfrak T}{\mathbf N}_{G/H}(y) =\ord_{\ttt}{\mathbf
    N}_{G/H}(y)$ and the conclusion of the lemma follows.
\end{proof}
\begin{lemma}
\label{lemma:unramified}
Let $H/F$ be an unramified extension of local fields of degree $n$.
Let {\te t} be the prime of $F$.  Let $x \in F$ be such that
$\mbox{ord}_{\mbox{\te t}}x \equiv 0 \pmod n$.  Then $x$ is a norm of some element of $H$.
\end{lemma}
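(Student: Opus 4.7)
The plan is to decompose $x$ into a unit part and a uniformizer part, and show each is a norm.

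First, since $H/F$ is an unramified extension of local fields of degree $n$, the prime $\ttt$ of $F$ has a unique extension $\mathfrak{T}$ to $H$, with ramification index $e(\mathfrak{T}/\ttt)=1$ and residue degree $f(\mathfrak{T}/\ttt)=n$. Consequently, any uniformizer $\pi \in F$ for $\ttt$ remains a uniformizer of $H$ for $\mathfrak{T}$. I would then write $\ord_\ttt x = kn$ for some $k \in \Z$ and factor $x = \pi^{kn} u$ with $u \in \mathcal{O}_F^\times$. Since $\pi^k \in F \subseteq H$ and unramified extensions of local fields are Galois (the Galois group being cyclic generated by the Frobenius), we have $\mathbf{N}_{H/F}(\pi^k) = (\pi^k)^n = \pi^{kn}$. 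Thus the "uniformizer part" is immediately a norm.

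The remaining step is to show that every unit $u \in \mathcal{O}_F^\times$ lies in $\mathbf{N}_{H/F}(\mathcal{O}_H^\times)$. This is the standard surjectivity of the norm on units for an unramified extension, and I would prove it in two stages. For the residue-field part, reduction modulo $\mathfrak{T}$ induces a norm map between the residue fields, which is an extension of finite fields of degree $n$; the norm map between finite fields is surjective, so given $u$ one can lift $\bar u \in k_F^\times$ to some $\bar v \in k_H^\times$ with $\mathbf{N}_{k_H/k_F}(\bar v) = \bar u$, and then lift $\bar v$ to a unit $v_0 \in \mathcal{O}_H^\times$. Then $\mathbf{N}_{H/F}(v_0) \equiv u \pmod{\ttt}$, so $u = \mathbf{N}_{H/F}(v_0)(1+\epsilon)$ for some $\epsilon \in \ttt \mathcal{O}_F$.

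For the principal-unit part, I would show that $\mathbf{N}_{H/F}\colon 1+\mathfrak{T}^i\mathcal{O}_H \to 1+\ttt^i\mathcal{O}_F$ is surjective for each $i \geq 1$. This follows from the fact that on the associated graded pieces, the norm induces the trace $k_H \to k_F$, which is surjective because $k_H/k_F$ is separable. Iterating this and passing to the limit (using completeness of $F$, i.e.\ a successive approximation/Hensel-style argument), any principal unit $1+\epsilon \in 1+\ttt\mathcal{O}_F$ is a norm from $1+\mathfrak{T}\mathcal{O}_H$. Combining with the previous paragraph gives $v \in \mathcal{O}_H^\times$ with $\mathbf{N}_{H/F}(v) = u$, so $x = \mathbf{N}_{H/F}(\pi^k v)$, completing the proof.

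The main technical point is the surjectivity on principal units; everything else is essentially bookkeeping. In our setting $F$ is the completion of a function field at $\ttt$, but all the arguments above (unique uniformizer, Frobenius-generated Galois group, surjectivity of norm on residue fields, successive-approximation on principal units using completeness) are local and apply verbatim.
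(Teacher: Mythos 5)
Your proof is correct and follows the same route as the paper: factor $x$ into a uniformizer power (trivially a norm since any $a\in F$ has norm $a^n$) times a unit, and then invoke surjectivity of the norm on units in an unramified extension. The only difference is that the paper cites this last fact directly from Weil's \emph{Basic Number Theory} (Corollary, p.\ 226), whereas you supply the standard filtration argument (surjectivity on residue fields via the finite-field norm, then on principal units via the trace on graded pieces and completeness); this is exactly the content of the cited result, so the two proofs coincide in substance.
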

\begin{proof}
Let $\pi$ be a local uniformizing parameter for {\te t}.  Then $x = \pi^n\varepsilon$, where $\varepsilon$ is a unit.  Since $\pi^n$ is an $F$-norm, $x$ is an $F$-norm if and only if $\varepsilon$ is an $F$ norm.  The last statement is true by \cite[Corollary, page 226]{W}.
\end{proof}

We now consider the ramification behavior of a given set of primes in an extension.
\begin{lemma}
\label{lemma:allsol}
Let $L$ be a function field of characteristic $p$, let $v \in L$ and let $\delta$ be a root of the equation
\begin{equation}
\label{eq:5}
x^p - x - v = 0.
\end{equation}
In this case either $\delta \in L$ or $\delta$ is of degree $p$ over
$L$.  In the second case the extension $L(\delta)/L$ is cyclic of
degree $p$ and the only primes possibly ramified in this extension are
the poles of $v$.  More precisely, if for some $L$-prime {\te a},
$\mbox{ord}_{\mbox{\te a}}v \not \equiv 0$ modulo $p$ and
$\ord_{\mte{a}}v < 0$, then a factor of {\te a} in $L(\delta)$ will be
ramified completely.  At the same time all zeros of $v$ will split
completely, i.e.\ into factors of relative degree 1, in $L(\delta)$.
\end{lemma}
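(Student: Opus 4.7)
The plan is to handle this as a standard Artin--Schreier computation, organized in three stages.

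First I would show the dichotomy between $\delta \in L$ and $[L(\delta):L]=p$. If $\beta$ is any root of \eqref{eq:5} in the algebraic closure, then $\beta + i$ is a root for every $i \in \F_p$, so the full root set is $\{\delta, \delta+1,\ldots, \delta+p-1\}$. If $L(\delta) \neq L$, suppose a monic factor $h(x)$ of $x^p - x - v$ of degree $1 \leq k < p$ lies in $L[x]$. Then the $x^{k-1}$ coefficient of $h$ is $-(k\delta + c)$ for some $c \in \F_p$, which forces $\delta \in L$ since $k \not\equiv 0 \pmod p$; contradiction. Hence $x^p-x-v$ is irreducible over $L$, and because $L(\delta)$ contains all conjugates of $\delta$, the extension is Galois with Galois group generated by $\delta \mapsto \delta + 1$, i.e.\ cyclic of order $p$.

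Next I would classify ramification prime by prime. Fix a prime $\mathfrak{a}$ of $L$ and a prime $\mathfrak{A}$ of $L(\delta)$ above it, with ramification index $e = e(\mathfrak{A}/\mathfrak{a})$. In the \emph{integral} case $\ord_{\mathfrak{a}} v \geq 0$, the element $\delta$ satisfies the monic polynomial $x^p-x-v$ whose coefficients are $\mathfrak{a}$-integral and whose derivative is the unit $-1$; reduction modulo $\mathfrak{a}$ is therefore separable, and the standard criterion for unramifiedness (a separable reduction of an integral equation of the generator) shows that $\mathfrak{a}$ is unramified in $L(\delta)/L$. This gives the statement that only poles of $v$ can ramify.

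For the remaining two refined statements I would do direct order computations. If $\ord_{\mathfrak{a}} v < 0$ with $\ord_{\mathfrak{a}} v \not\equiv 0 \pmod p$, then $\ord_{\mathfrak{A}}(\delta^p - \delta) = e\,\ord_{\mathfrak{a}} v < 0$, forcing $\ord_{\mathfrak{A}}\delta < 0$ (otherwise the left side would be non-negative), so $\ord_{\mathfrak{A}}(\delta^p - \delta) = p\,\ord_{\mathfrak{A}}\delta$. Therefore
\[
p\,\ord_{\mathfrak{A}}\delta = e \,\ord_{\mathfrak{a}} v,
\]
and since $p \nmid \ord_{\mathfrak{a}} v$ while $e \leq p$, we must have $e = p$, i.e.\ $\mathfrak{a}$ is totally ramified. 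Finally, if $\ord_{\mathfrak{a}} v > 0$, then modulo $\mathfrak{a}$ the polynomial becomes $x^p - x = \prod_{i=0}^{p-1}(x-i)$, which splits into $p$ distinct linear factors over the residue field of $\mathfrak{a}$; a standard Hensel-style argument (factors lift uniquely since $\gcd$ of distinct factors is $1$) then shows that $\mathfrak{a}$ splits into $p$ distinct primes in $L(\delta)$, each of relative degree $1$, i.e.\ completely. The main subtlety worth being careful about is the unramifiedness argument in Case~A and checking that no special behavior occurs when $\ord_{\mathfrak{a}} v < 0$ but $\equiv 0 \pmod p$ (which the lemma deliberately does not address, since there ramification may or may not occur depending on whether an Artin--Schreier substitution $\delta \mapsto \delta - w$ reduces the pole further).
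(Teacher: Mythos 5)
Your proposal is correct and follows essentially the same route as the paper: the Artin--Schreier dichotomy, unramifiedness at integral primes via the constant different (equivalently, the separable reduction $x^p-x-\bar v$ with unit derivative $-1$), the order-count $p\,\ord_{\mathfrak A}\delta = e\,\ord_{\mathfrak a}v$ forcing $e=p$ when $p\nmid\ord_{\mathfrak a}v<0$, and complete splitting at zeros of $v$ by lifting the factorization $x^p-x=\prod_i(x-i)$ through the integral power basis of $\delta$. The only cosmetic differences are that you spell out the irreducibility argument and phrase the unramified and splitting steps in Hensel/Kummer--Dedekind language, whereas the paper cites Chevalley on differents and the integral-basis criterion directly.
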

\begin{proof}
  Let $\delta = \delta_1,\ldots,\delta_p$ be all the roots of
  (\ref{eq:5}) in the algebraic closure of $L$.  Then we can number
  the roots so that $\delta_i = \delta + i - 1$.  Thus, either the
  left side of (\ref{eq:5}) factors completely or it is irreducible.
  In the second case $\delta$ is of degree $p$ over $L$ and
  $L(\delta)$ contains all the conjugates of $\delta$ over $L$.  Thus,
  the extension $L(\delta)/L$ is Galois of degree $p$, and therefore
  cyclic.  Next consider the different of $\delta$.  This different
  is a constant.  By \cite[Lemma 2, page 71]{C}, this implies that no
  prime of $L$ at which $\delta$ is integral has any ramified factors
  in the extension $L(\delta)/L$.  Suppose now {\te a} is a prime of
  $L$ described in the statement of the lemma.  Let $\tilde{\mbox{\te
      a}}$ be an $L(\delta)$-prime above {\te a}.  Then
  $\mbox{ord}_{\tilde{\mbox{\te a}}}v \equiv 0 $ modulo $p$.  Thus,
  $\tilde{\mbox{\te a}}$ must be totally ramified over {\te
    a}. Finally, let $\bb$ be zero of $v$.  Since the power basis of
  $\delta$ has a constant discriminant, the power basis of $\delta$ is
  an integral basis with respect to $\bb$ and therefore, if the
  irreducible polynomial of $v$ factors completely modulo $\bb$, then
  $\bb$ factors completely in the extension.
\end{proof}
In a similar manner one can show that the following lemma is true.
\begin{lemma}
\label{ramifyorsplit}
Let $L$ be a function field of characteristic $p>0$ possessing a $q$-th
primitive root of unity, when $q \not=p$, $z \in L$, $\gamma$ a root
of $T^q-z$, $\aaa$ a prime of $L$.  In this case, if $\ord_{\aaa}z
\not \equiv 0 \mod q$, then $\aaa$ is completely ramified in the
extension $L(\gamma)/L$.  If $z$ is integral at $\aaa$ and $z \equiv
c^q \not =0 \mod \aaa$, then $\aaa$ splits completely, i.e.\ into
factors of relative degree 1, in the extension $L(\gamma)/L$.
\end{lemma}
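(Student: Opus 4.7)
\textbf{Proof plan for Lemma \ref{ramifyorsplit}.} The plan is to treat the two assertions via standard Kummer theory, closely paralleling the argument given in Lemma \ref{lemma:allsol} for Artin--Schreier extensions. Since $L$ contains a primitive $q$-th root of unity $\zeta_q$, Kummer theory tells us that either $z \in (L^\times)^q$, in which case $\gamma \in L$ and both assertions are vacuous, or $T^q - z$ is irreducible over $L$ and $L(\gamma)/L$ is cyclic of degree $q$ with Galois group generated by $\gamma \mapsto \zeta_q \gamma$. I will assume throughout the non-trivial case; note that in the setting of the first assertion the condition $\ord_{\aaa}z \not\equiv 0 \pmod q$ already forces $z$ not to be a $q$-th power.

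For the first assertion, let $\mathfrak{A}$ be any prime of $L(\gamma)$ above $\aaa$, and let $e = e(\mathfrak{A}/\aaa)$. Taking $\ord_{\mathfrak{A}}$ of both sides of $\gamma^q = z$ yields
$$
q\,\ord_{\mathfrak{A}}(\gamma) = \ord_{\mathfrak{A}}(z) = e\,\ord_{\aaa}(z).
$$
Hence $q \mid e\,\ord_{\aaa}(z)$, and since $q$ is prime with $q \nmid \ord_{\aaa}(z)$, we conclude $q \mid e$. Combined with the inequality $e \leq [L(\gamma):L] = q$, this forces $e = q$, so $\aaa$ is totally ramified and $\mathfrak A$ is the only prime above $\aaa$.

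For the second assertion, the key point is that the power basis $\{1, \gamma, \ldots, \gamma^{q-1}\}$ is a local integral basis at $\aaa$. Indeed, the discriminant of the polynomial $T^q - z$ equals, up to sign, $q^{q}z^{q-1}$; under our hypotheses, $q$ is a unit at $\aaa$ (because $\mbox{char}\,L = p \ne q$) and $z$ is a unit at $\aaa$ (since $z \equiv c^q \not\equiv 0 \pmod{\aaa}$), so this discriminant is a unit at $\aaa$. The Kummer--Dedekind theorem then lets us read the factorization of $\aaa$ in $L(\gamma)$ off the factorization of $T^q - z$ modulo $\aaa$. In the residue field we have
$$
T^q - z \equiv T^q - c^q = \prod_{i=0}^{q-1}(T - \zeta_q^i c) \pmod{\aaa},
$$
and the $q$ factors are pairwise distinct linear polynomials since $c \not\equiv 0$ and $\zeta_q$ is a primitive $q$-th root of unity. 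Thus $\aaa$ factors in $L(\gamma)$ as a product of $q$ distinct unramified primes of relative degree one, i.e. it splits completely.

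The only genuinely delicate point is the integrality of the power basis at $\aaa$, but the discriminant computation above handles it in one line; the rest is a direct transcription of the residue-field factorization via Kummer--Dedekind, mirroring how Lemma \ref{lemma:allsol} used the constant different of $\delta$ to control ramification in the Artin--Schreier case.
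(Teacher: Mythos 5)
Your proof is correct, and it is exactly the Kummer-theoretic translation of the Artin--Schreier argument that the paper gives for Lemma~\ref{lemma:allsol}; the paper itself omits the proof of Lemma~\ref{ramifyorsplit}, saying only that it follows ``in a similar manner,'' and your write-up supplies precisely those details (the $\ord$-computation for total ramification, the discriminant $\pm q^q z^{q-1}$ being a unit at $\aaa$ in place of the constant different of $\delta$, and the factorization of $T^q - c^q$ into distinct linear factors modulo $\aaa$). One trivial wording nit: when $z \in (L^\times)^q$ the first assertion is indeed vacuous, but the second is not vacuous---it holds trivially since $L(\gamma)=L$---which is what you intend.
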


We now specialize the lemmas above to our situation.  
\begin{corollary}
\label{K_1}
The following statements are true about the extensions $N/K$ and $N_0/K_0$.
\begin{enumerate}
\item There is no constant field extension.
\item The factors of $\Pp$ split completely, i.e into factors of relative degree 1.
\item The factors of $\Qq$ are ramified completely, i.e.\ into factors of relative degree 1.
\end{enumerate}

\end{corollary}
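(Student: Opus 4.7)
The plan is to obtain all three statements by direct application of the two ramification lemmas already at hand (Lemma~\ref{lemma:allsol} and Lemma~\ref{ramifyorsplit}) to the defining polynomial of $\delta$, splitting on whether $q=p$ or $q\neq p$. The arguments for $N/K$ and $N_0/K_0$ will be identical, so I would carry out only one.

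First I would handle the case $q=p$, where $\delta^p-\delta=-t$. Applying Lemma~\ref{lemma:allsol} with $v=-t$: every factor $\qq$ of $\Qq$ is a pole of $v$ with $\ord_\qq(-t)=-1\not\equiv 0\pmod p$, so $\qq$ is totally ramified in $N/K$; and every factor $\pp$ of $\Pp$ is a zero of $v$, so $\pp$ splits into factors of relative degree $1$. In the case $q\neq p$, I would use $\delta^q=t+1$ together with Lemma~\ref{ramifyorsplit} applied to $z=t+1$ (noting that the primitive $q$-th root of unity lies in $C_0\subseteq C$ by the running hypothesis). At each factor $\qq$ of $\Qq$, $\ord_\qq(t+1)=\ord_\qq(t)=-1\not\equiv 0\pmod q$, giving complete ramification. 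At each factor $\pp$ of $\Pp$, $z$ is a unit with $z\equiv 1=1^q\pmod\pp$, so the lemma gives complete splitting into factors of relative degree $1$. This disposes of statements (2) and (3) in both cases.

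For statement (1), I would observe that in either case the existence of a totally ramified prime forces $[N:K]$ to equal the prime $p$ (resp.\ $q$); in particular, the defining polynomial is irreducible. Let $C'$ denote the constant field of $N$. Then $KC'/K$ is an unramified (indeed, everywhere unramified) subextension of $N/K$, whose degree divides the prime $[N:K]$. If $KC'=N$, the extension $N/K$ would itself be everywhere unramified, contradicting the totally ramified prime above $\Qq$ found in the previous step. Hence $KC'=K$, so $C'=C$, which is exactly the assertion that there is no constant field extension. The same argument applied to $N_0/K_0$ yields the analogous statement there.

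I do not expect any serious obstacle: the key point is that the ``bad'' divisor in both the Artin--Schreier and Kummer constructions has been arranged to be exactly $\Qq$ (via the choice $v=-t$, resp.\ $z=t+1$) with $t$ having only simple zeros and poles, so the divisibility conditions in Lemmas~\ref{lemma:allsol} and~\ref{ramifyorsplit} are satisfied on the nose. The only mildly subtle point is the deduction that a prime-degree extension with a ramified prime is geometric, but this is a one-line argument using that constant field extensions are unramified everywhere.
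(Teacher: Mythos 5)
Your proof is correct and follows exactly the route the paper intends: the paper omits the argument entirely, merely remarking ``we now specialize the lemmas above to our situation,'' and your specialization of Lemma~\ref{lemma:allsol} (with $v=-t$) and Lemma~\ref{ramifyorsplit} (with $z=t+1$) to the factors of $\Pp$ and $\Qq$ is precisely that specialization. Your one-line argument for statement~(1) --- that a prime-degree extension containing a totally ramified prime must be geometric because constant field extensions are everywhere unramified --- is the standard way to supply the detail the paper leaves implicit, so there is no real divergence from the intended proof.
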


Next we need take a look at zeros and poles of $h_w$ and zeros and poles of $w$ in $N, N_0, N(\beta_w),$ and $N(\beta_w)$.  (We remind the reader that $h_w=t^{-1}w^q +t^{-q}$,  $\beta_w$ is a root of $T^p-T-\frac{1}{h_w}$ if $p=q$, and $\beta_w$ is a root of $T^q-(\frac{1}{h_w}+1)$ if $p\not=q$.)
\begin{lemma}
\label{order}
The following statements are true in $N(\beta_w)$ and, assuming $w$ is algebraic over $C_0(t)$, in $N_0(\beta_w)$:
\begin{enumerate}
\item If $\hat \pp$ is a prime of $N(\beta_w)$ (or $N_0(\beta_w)$) and
  $\hat \pp | \Pp$, while $\ord_{\hat \pp}w<0$, then the relative
  degree of $\hat \pp$ over $\bar \pp$, the prime below it in $N$
  (respectively $N_0$) is 1, and therefore the relative degree of
  $\hat \pp$ over $\pp$, the prime below it in $K$ (respectively
  $K_0$) is 1.
\item If $\hat \pp$ is a prime of $N(\beta_w)$ (or $N_0(\beta_w))$ and
  $\hat \pp | \Pp$ in $N(\beta_w)$ (respectively $N_0(\beta_w)$) while
  $\ord_{\hat \pp}w<0$ then $\ord_{\hat \pp}h_w <0$ and $\ord_{\hat
    \pp}h_w \not \equiv 0 \mod q$.

\item If $\ttt$ is a prime of $N(\beta_w)$ (or $N_0(\beta_w)$) and
  $\ttt \not | \Pp$, then $\ord_{\ttt}h_w\equiv 0 \mod q$.
\item If $\pp$ is a prime of $K$ (or $K_0$) such that $\pp | \Pp$ and
  $\ord_{\pp}w \geq 0$ then $\ord_{\pp}h_w \equiv 0 \mod q$.
\end{enumerate}
\end{lemma}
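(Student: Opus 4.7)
The plan is to reduce the statements in $N(\beta_w)$ (or $N_0(\beta_w)$) to calculations in $N$ (respectively $N_0$), using Corollary~\ref{K_1} and the ramification/splitting dichotomies of Lemmas~\ref{lemma:allsol} and~\ref{ramifyorsplit}. Throughout I write $\bar\pp$ (respectively $\ttt'$) for the restriction to $N$ of a given prime $\hat\pp$ (respectively $\ttt$) of $N(\beta_w)$; the argument in $N_0$ is identical.

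For parts (1) and (2): since $\Pp$ splits completely in $N/K$ we have $\ord_{\bar\pp}t=1$. Writing $m:=-\ord_{\bar\pp}w>0$, the two summands of $h_w=t^{-1}w^{q}+t^{-q}$ have orders $-1-qm$ and $-q$ at $\bar\pp$; since $m\geq 1$ the first is strictly smaller, so $\ord_{\bar\pp}h_w=-1-qm$, which is negative and $\equiv -1\pmod q$. Hence $1/h_w$ has a zero at $\bar\pp$, and the polynomial defining $\beta_w$ reduces modulo $\bar\pp$ to $T^{p}-T$ (when $p=q$) or to $T^{q}-1$ (when $p\neq q$), each of which factors into distinct linear factors over the residue field. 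Lemmas~\ref{lemma:allsol} and~\ref{ramifyorsplit} then imply that $\bar\pp$ splits completely in $N(\beta_w)/N$, so $e(\hat\pp|\bar\pp)=f(\hat\pp|\bar\pp)=1$. Combined with the complete splitting of $\Pp$ in $N/K$ this yields part~(1), and part~(2) is immediate from $\ord_{\hat\pp}h_w=\ord_{\bar\pp}h_w=-1-qm$.

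For part (3) I would work at $\ttt'$ and case split on $(\ord_{\ttt'}t,\ord_{\ttt'}w)$. By Corollary~\ref{K_1}, $\ord_{\ttt'}t\in\{0,-q\}$ once $\ttt'\nmid\Pp$. Writing $h_w=t^{-q}(t^{q-1}w^{q}+1)$ and comparing orders of the two summands, a direct calculation shows $\ord_{\ttt'}h_w$ is already a multiple of $q$ in every case except possibly $(\ord_{\ttt'}t,\ord_{\ttt'}w)\in\{(0,0),(-q,q-1)\}$, where the summand orders coincide at $0$ and at $q^{2}$ respectively; cancellation can only raise $\ord_{\ttt'}h_w$ above these values, so either it is still a multiple of $q$, or it becomes strictly positive and not divisible by $q$. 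In the latter situation $1/h_w$ has a pole at $\ttt'$ of order not divisible by $q$, so Lemma~\ref{lemma:allsol} (for $p=q$) or Lemma~\ref{ramifyorsplit} (for $p\neq q$) applied to the polynomial defining $\beta_w$ forces $\ttt'$ to be totally ramified in $N(\beta_w)/N$, giving $e(\ttt|\ttt')=q$. Either way $\ord_\ttt h_w=e(\ttt|\ttt')\ord_{\ttt'}h_w\equiv 0\pmod q$.

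Part (4) is a direct calculation: at $\pp\mid\Pp$ in $K$ (or $K_0$) we have $\ord_\pp t=1$, so $\ord_\pp(t^{-q})=-q$ while $\ord_\pp(t^{-1}w^{q})=-1+q\ord_\pp w\geq -1>-q$. Hence $\ord_\pp h_w=-q$, which is divisible by $q$. The main delicate point is part~(3): one must observe that even when the two summands of $h_w$ have equal orders at $\ttt'$ and cancellation produces an apparently non-$q$-divisible positive order, the resulting pole of $1/h_w$ triggers total ramification in $N(\beta_w)/N$, and the factor of $q$ from the ramification index restores $q$-divisibility after passing to $\ttt$.
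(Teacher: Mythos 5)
Your proof is correct and follows essentially the same route as the paper's: direct order computations handle most primes, Corollary~\ref{K_1} controls what happens above $\Pp$ and $\Qq$, and the Artin--Schreier/Kummer ramification Lemmas \ref{lemma:allsol} and \ref{ramifyorsplit} supply the extra factor of $q$ at the troublesome zeros of $h_w$. One small observation: your cancellation case $(\ord_{\ttt'}t,\ord_{\ttt'}w)=(-q,q-1)$ is actually vacuous, since at such a $\ttt'$ (totally ramified over its $K$-prime) one has $\ord_{\ttt'}w\equiv 0\pmod q$, so $\ord_{\ttt'}w\neq q-1$; your treatment of it is nevertheless harmless.
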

\begin{center}
\xymatrix{{N_0(\beta_w)}\ar[r] &{N(\beta_w)}\\
{N_0=K_0(\delta)}\ar[u]\ar[r]&{K(\delta)}\ar[u]\\
{K_0}\ar[u]\ar[r]&{K}\ar[u]}

\end{center}
\begin{proof}
  First let $\pp |\Pp$ in $K$ (or $K_0$) and note that by Corollary
  \ref{K_1}, we have that $\pp$ will split completely into factors of
  relative degree 1 in $N$ (respectively $N_0$).  Next, if
  $\ord_{\pp}w <0$, then $\ord_{\pp} h_w <0$ and $\ord_{\pp} h_w \not
  \equiv 0 \mod q$.  Further, for
  any $\bar \pp | \pp$ (in $N$ or $N_0$) we have $\ord_{\bar
    \pp}h_w=\ord_{\pp}h_w$, since $\bar \pp$ splits completely in the
  extension $N/K$ (or $N_0/K_0$).  Lemmas
  \ref{lemma:allsol} and \ref{ramifyorsplit} imply that $\bar \pp$
  will split completely in the extension $N(\beta_w)/N$ (respectively
  $N_0(\beta_w)/N_0$). So if $\hat \pp | \bar \pp$ we also have
  that $\ord_{\hat \pp}h_w=\ord_{\bar \pp}h_w=\ord_{\pp}h_w$.  At the
  same time, by the same argument, the relative degree of $\hat \pp$
  over $\pp$ is one.
 
  Again by Corollary~\ref{K_1},
we have that $\ord_{\hat \qq}h_w \equiv
  0 \mod q$ for any $N$-prime (or $N_0$-prime) $\hat \qq$ lying above
  a $K$-prime (or $K_0$-prime) $\qq$ dividing $\Qq$.  Finally consider
  the primes occurring in the divisor of $h_w$.  

If $\ttt$ is a pole
  of $h_w$ in $K$ (or $K_0$) and $\ttt$ does not occur in the divisor
  of $t$, then it is a pole of $w$ and $h_w$ has order divisible by
  $q$ at $\ttt$.  Hence, if $\hat \ttt$ is a prime of $N(\beta_w)$  (or
  $N_0(\beta_w)$) above $\bar \ttt$, we also have that $h_w$ has order divisible by $q$
  at $\hat \ttt$.  

Now let $\bar \ttt$ be a zero of $h_w$ in $N$ (or
  $N_0$) with order not divisible by $q$. Lemmas
  \ref{lemma:allsol} and \ref{ramifyorsplit} imply that
  $\bar \ttt$ ramifies completely in the extension $N(\beta_w)/N$ (or
  $N_0(\beta_w)/N_0$).  Finally, the last assertion of the lemma
  follows from the formula defining $h_w$.

\end{proof}

\begin{lemma}
\label{not split}
If $w$ has a pole at any factor of $\Pp$ in $K$, then there is no $x \in N(\alpha,\beta_w)$ such that 
\begin{equation}
\label{normeq}
{\mathbf N}_{N(\alpha, \beta_w)/N(\beta_w)}(x)=h_w
\end{equation}
\end{lemma}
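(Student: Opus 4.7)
The plan is to apply Lemma~\ref{lemma:notcongruent} to the cyclic extension $N(\alpha,\beta_w)/N(\beta_w)$: exhibiting a prime $\hat\pp$ of $N(\beta_w)$ which does not split in this extension and at which $\ord_{\hat\pp}(h_w)\not\equiv 0\pmod q$ will force $h_w$ not to be a norm.

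First I verify that $N(\alpha,\beta_w)/N(\beta_w)$ is a nontrivial extension of degree $q$. By Corollary~\ref{K_1} the constant field of $N$ is $C$. I next check that the constant field of $N(\beta_w)$ is still $C$, so that $\alpha\in\bar\F_p\setminus C$ does not already lie in $N(\beta_w)$. The defining equation for $\beta_w$ (of Artin--Schreier or Kummer type in $1/h_w$) combined with Lemma~\ref{order} keeps the extension from being a pure constant-field extension that absorbs $\alpha$: the relevant poles or zeros of $h_w$ coming from the $\Pp$-part of the divisor of $w$ (via Lemma~\ref{order}(2)) prevent trivialization. Once $\alpha\notin N(\beta_w)$ is established, $N(\alpha,\beta_w)/N(\beta_w)$ is a cyclic constant-field extension of degree exactly $q$ obtained by adjoining $\alpha$.

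Next, because $w$ has a pole at some factor $\pp$ of $\Pp$ in $K$, parts (1) and (2) of Lemma~\ref{order} produce a prime $\hat\pp$ of $N(\beta_w)$ lying above $\pp$ with $f(\hat\pp/\pp)=1$, $\ord_{\hat\pp}(h_w)<0$, and $\ord_{\hat\pp}(h_w)\not\equiv 0\pmod q$ (in fact one computes $\ord_{\hat\pp}(h_w)\equiv -1\pmod q$ from $h_w=t^{-1}w^q+t^{-q}$ using $\ord_{\hat\pp}(t)=1$). The crux of the proof, and its main obstacle, is showing that $\hat\pp$ is inert in the constant-field extension $N(\alpha,\beta_w)/N(\beta_w)$: since this extension has prime degree $q$ and is a constant-field extension, non-splitting is equivalent to $\alpha\notin k(\hat\pp)=k(\pp)$. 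A direct argument that $k(\pp)$ does not acquire $\alpha$ must be supplied using the hypothesis that $K$ contains no root of the defining polynomial of $\alpha$: if $\alpha\in k(\pp)$, Hensel's lemma applied to the separable polynomial $T^q-a$ (resp.\ $T^p-T+a$) lifts $\alpha$ to the $\pp$-adic completion $\hat K_\pp$, and combining this with the fact that $\Pp$ has residue field $C$ at the base and that $K\cap\hat K_\pp=K$ yields a contradiction with $\alpha\notin K$. Once inertness is established, Lemma~\ref{lemma:notcongruent} applied with $G=N(\alpha,\beta_w)$, $H=N(\beta_w)$, $n=q$, $\ttt=\hat\pp$, $x=h_w$ concludes that $h_w$ is not a norm from $N(\alpha,\beta_w)$ down to $N(\beta_w)$.
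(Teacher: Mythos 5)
Your plan is essentially the paper's: locate a prime $\hat\pp$ of $N(\beta_w)$ above a pole $\pp\mid\Pp$ of $w$, show $\hat\pp$ is inert in $N(\alpha,\beta_w)/N(\beta_w)$, observe $\ord_{\hat\pp}h_w\not\equiv 0\pmod q$, and conclude via Lemma~\ref{lemma:notcongruent}. But your argument for the key inertness step contains a genuine gap. You write that if $\alpha\in k(\pp)$ then Hensel's lemma lifts $\alpha$ to the completion $\hat K_\pp$, and that ``$K\cap\hat K_\pp=K$ yields a contradiction with $\alpha\notin K$.'' This does not work: $K$ sits inside $\hat K_\pp$, so $K\cap\hat K_\pp=K$ is vacuous, and $\alpha\in\hat K_\pp$ in no way forces $\alpha\in K$. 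The Hensel detour is not only unnecessary, it fails to deliver the contradiction.

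The actual reason $\alpha\notin k(\hat\pp)$ is much more direct, and you already have all the pieces but did not assemble them. Because $t$ has simple zeros in $K$ (Notation~\ref{intnot}), $\Pp$ is unramified with relative degree $1$ in $K/C(t)$, so $k(\pp)=C$; by Corollary~\ref{K_1} and Lemma~\ref{order}(1), $\hat\pp$ has relative degree $1$ over $\pp$, so $k(\hat\pp)=C$ as well. But Notation~\ref{intnot} postulates precisely that $C$ contains no root of \eqref{notp} (resp.\ \eqref{q=p}); hence $\alpha\notin k(\hat\pp)$. By Lemma~\ref{ramifyorsplit} (resp.\ Lemma~\ref{lemma:allsol}), $\hat\pp$ does not split, i.e.\ is inert of degree $q$. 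Note that this automatically settles your preliminary worry about whether $N(\alpha,\beta_w)/N(\beta_w)$ is a proper extension: if $\alpha$ lay in $N(\beta_w)$, it would be a constant of $N(\beta_w)$ and thus appear in the residue field of every prime, contradicting $\alpha\notin k(\hat\pp)$. Your separate hand-waving about ``poles or zeros of $h_w$ preventing trivialization'' is therefore not needed and not rigorous as stated; it can be dropped once the residue-field computation is made explicit.
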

\begin{proof}
  Let $\hat \pp$ be a factor of $\Pp$ in $N(\beta_w)$ such that $w$
  has a negative order at $\hat \pp$.  In this case, $w$ has a
  negative order at $\pp$, the prime below $\hat \pp$ in $K$.  By
  Lemma \ref{order}, $\pp$ splits completely into distinct unramified
  factors of relative degree 1 and $\pp$ is of degree 1 in $K$, so we
  conclude that there is no constant field extension in the extension
  $N(\beta_w)/N$ and  either \eqref{notp} or \eqref{q=p}, depending on whether $p=q$ or $p \not=q$, do not have a root in the residue field
  of $\hat \pp$ in $N(\beta_w)$.  Thus, $\hat \pp$ does not split in
  the extension $N(\beta_w,\alpha)/N(\beta_w)$. If $h_w$ is a norm in
  this extension it must have order divisible by $q$ at $\hat \pp$.
  However, by Lemma \ref{order} again, we have that $\ord_{\hat \pp}
  h_w =\ord_{\pp}h_w \not \equiv 0 \mod q$.
\end{proof}
\begin{lemma}
\label{hassolutios}
If $w$ is algebraic over $\F_p(t)$ and has no poles at any factor of $\Pp$, then there exists $x \in N(\alpha,\beta_w)$  satisfying \eqref{normeq}. 
\end{lemma}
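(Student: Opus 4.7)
The plan is to verify that $h_w$ is a norm from $N(\alpha,\beta_w)$ down to $N(\beta_w)$ by checking Hasse's local--global principle for cyclic extensions. Since $\alpha$ is algebraic over $\F_p$, adjoining it to $N(\beta_w)$ produces a constant field extension. If $\alpha\in N(\beta_w)$, the extension is trivial and the norm map is the identity, so the lemma holds immediately; otherwise the extension $N(\alpha,\beta_w)/N(\beta_w)$ is cyclic of prime degree $q$. As a constant field extension it is unramified at every prime of $N(\beta_w)$, and the local degree at each place is either $1$ or $q$.

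The first step is to show that $h_w$ is a local norm at every prime $\ttt$ of $N(\beta_w)$. Because $\ttt$ is unramified in the extension with local degree dividing $q$, Lemma~\ref{lemma:unramified} reduces this to verifying $\ord_{\ttt}(h_w)\equiv 0\pmod q$. Here Lemma~\ref{order} gives exactly what is needed: the only primes at which $q$--divisibility could fail are those $\hat\pp\mid\Pp$ with $\ord_{\hat\pp}w<0$ (the case treated in its second item), and the hypothesis that $w$ has no pole at any factor of $\Pp$ makes this case vacuous. The remaining items of Lemma~\ref{order} cover both the primes away from $\Pp$ and the primes above $\Pp$ at which $w$ is integral, and in both situations $\ord_{\ttt}h_w\equiv 0\pmod q$. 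Hence $h_w$ is a local norm at every prime of $N(\beta_w)$.

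The second step is to globalize via the Hasse norm theorem for cyclic extensions of global function fields. Because $w$ is algebraic over $\F_p(t)$, all relevant data lives in a finite--constant--field subfield: pick a finite extension $\tilde K$ of $\F_p(t)$ containing $w$ together with a generator $\gamma_0$ of $K_0/C_0(t)$ and the constant $a$, with finite constant field $\tilde C_0\subset C_0$, and form $\tilde L=\tilde K(\delta,\beta_w)$ and $\tilde L'=\tilde L(\alpha)$. Then $\tilde L$ is a global function field over a finite field, $\tilde L'/\tilde L$ is a cyclic constant field extension of degree $q$ (the hypothesis on $\alpha\notin C$ ensures $\alpha\notin\tilde C_0$), and the local--norm computation carried out above transfers verbatim to every prime of $\tilde L$. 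Hasse's theorem then produces $y\in\tilde L'$ with ${\mathbf N}_{\tilde L'/\tilde L}(y)=h_w$. Viewed inside $N(\alpha,\beta_w)$, the same $y$ has norm $h_w$ down to $N(\beta_w)$, because the Galois group of $N(\alpha,\beta_w)/N(\beta_w)$ is generated by the same map as that of $\tilde L'/\tilde L$ (shift $\alpha\mapsto\alpha+1$ when $q=p$, or multiplication $\alpha\mapsto\xi_q\alpha$ when $q\neq p$), so the norm value is unchanged by enlarging the base. The main obstacle is precisely this descent to a global function field, which is needed because $C_0$ can be an infinite algebraic extension of $\F_p$; everything else reduces to order computations already supplied by Lemmas~\ref{order} and~\ref{lemma:unramified}.
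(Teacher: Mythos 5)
Your proof is correct and follows essentially the same route as the paper: both reduce the local conditions to $\ord_\ttt h_w \equiv 0 \pmod q$ via Lemma~\ref{order} and the unramifiedness of the constant field extension, descend to a subfield with finite constant field (your $\tilde L$ plays the role of the paper's $\hat N_0(\beta_w)$), and invoke the Hasse norm principle for cyclic extensions of global function fields before lifting the solution back up by matching conjugates under the generator of the Galois group. The only cosmetic difference is that the paper passes first through $N_0(\beta_w)$ before further descent, whereas you go directly from $N(\beta_w)$ to $\tilde L$; the substance is identical.
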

\begin{proof}
First we observe that it is enough to find $x \in N_0(\alpha,\beta_w)$ with 
\begin{equation}
\label{algnormeq}
{\mathbf N}_{N_0(\alpha, \beta_w)/N_0(\beta_w)}(x)=h_w.
\end{equation}
Indeed, since $\alpha$ is of degree $q$ over both $N(\beta_w)$ and
$N_0(\beta_w)$ or is of degree 1 over both fields, any element $x \in
N_0(\alpha, \beta_w)$ has the same coordinates with respect to the
power basis of $\alpha$ (which is either $\{1\}$, if the degree of $\alpha$
over the fields in question is 1, or $\{1,\alpha,\ldots, q-1\}$, if the
degree of $\alpha$ over both fields is $q$).  Thus $x$ has the same
conjugates over $N(\beta_w)$ and $N_0(\beta_w)$ and therefore the same
norm.  Next, by Lemmas \ref{ramifyorsplit} and \ref{lemma:allsol}, and
by construction of $h_w$, the divisor of $h_w$ is a
$q$-th power of another divisor.  Now, if $\alpha \in N_0(\beta_w)$,
we are done.  Otherwise, we observe that there is a finite extension
$\hat N_0$ of $\F_p(t)$ such that
    \begin{itemize}
    \item $\alpha$ is of degree $q$ over $\hat N_0$,
    \item $w, h_w \in \hat N_0$,
    \item the divisor of $h_w$ is a $q$-th power of another divisor.
    \end{itemize}
   By an argument similar to the one above, it is enough to solve
   \begin{equation}
\label{smallalgnormeq}
{\mathbf N}_{\hat N_0(\alpha, \beta_w)/\hat N_0(\beta_w)}(x)=h_w.
\end{equation} 
Since the extension $\hat N_0(\alpha, \beta_w)/\hat N_0(\beta_w)$ is
unramified and thus locally every unit is a norm (see
\cite[Corollary, page 226]{W}), we conclude by the Strong Hasse Norm
principle (see \cite[Theorem 32.9]{Reiner}), that $h_w$ is a norm and
therefore $x$ as required exists.
\end{proof}
We now have the following theorem.
\begin{theorem}
\label{thm:polynorm}
Let $\alpha_j=\alpha +j, j=0,\ldots, p-1$, if $p=q$ and let $\alpha_j=\xi_q^j\alpha, j=0,\ldots, q-1$, if $q\not=p$.  Let 
\begin{equation}
\label{polynorm}
P(a_0,\ldots,a_{q-1})=\prod_{j=0}^{q-1}(a_0 +a_1\alpha_j + \ldots +a_{q-1}\alpha_j^{q-1})=h_w.
\end{equation}
In this case there exist $a_0,\ldots, a_{q-1} \in N(\beta_w)$ such
that \eqref{polynorm} holds only if $w$ has no poles at any factor
$\Pp$, and if $w$ is algebraic over $\F_p(t)$ and has no poles at any
factor of $\Pp$, then $a_0,\ldots, a_{q-1} \in N(\beta_w)$ such that
\eqref{polynorm} holds exist.
\end{theorem}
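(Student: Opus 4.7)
The strategy is to identify $P(a_0,\ldots,a_{q-1})$ with the norm form of the extension $N(\beta_w,\alpha)/N(\beta_w)$, so that \eqref{polynorm} becomes a question of norm representation; the theorem then follows from Lemmas~\ref{not split} and \ref{hassolutios}. Indeed, when $[N(\beta_w,\alpha):N(\beta_w)]=q$, the elements $\alpha_j$ are precisely the Galois conjugates of $\alpha$ over $N(\beta_w)$, and therefore
\[
P(a_0,\ldots,a_{q-1}) \;=\; {\mathbf N}_{N(\beta_w,\alpha)/N(\beta_w)}\bigl(a_0+a_1\alpha+\cdots+a_{q-1}\alpha^{q-1}\bigr).
\]

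For the ``only if'' direction, suppose \eqref{polynorm} admits a solution $(a_0,\ldots,a_{q-1})\in N(\beta_w)^q$, and assume toward a contradiction that $w$ has a pole at some factor of $\Pp$. The proof of Lemma~\ref{not split} exhibits a prime $\hat\pp$ of $N(\beta_w)$ lying above $\Pp$ for which the polynomial defining $\alpha$ is irreducible over the residue field; consequently $\hat\pp$ does not split in $N(\beta_w,\alpha)/N(\beta_w)$, which forces $[N(\beta_w,\alpha):N(\beta_w)]=q$. Hence $P$ equals the norm form, and $x:=\sum_i a_i\alpha^i$ satisfies ${\mathbf N}_{N(\beta_w,\alpha)/N(\beta_w)}(x)=h_w$, contradicting Lemma~\ref{not split}.

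For the ``if'' direction, assume $w$ is algebraic over $\F_p(t)$ and has no pole at any factor of $\Pp$. Split into two cases. If $\alpha\in N(\beta_w)$, the $\alpha_j$ all lie in $N(\beta_w)$ and are pairwise distinct (in the $q\neq p$ case this uses $a\neq 0$, guaranteed by the irreducibility of $T^q-a$ over $K$), so the Vandermonde system $\sum_{i} a_i\alpha_j^i = y_j$ is solvable for any prescribed $y_0,\ldots,y_{q-1}\in N(\beta_w)$; taking $y_0=h_w$ and $y_j=1$ otherwise yields $P(a_0,\ldots,a_{q-1})=h_w$. If instead $\alpha\notin N(\beta_w)$, then $[N(\beta_w,\alpha):N(\beta_w)]=q$ and $P$ coincides with the norm form; Lemma~\ref{hassolutios} supplies an $x\in N(\alpha,\beta_w)$ with ${\mathbf N}_{N(\alpha,\beta_w)/N(\beta_w)}(x)=h_w$, and expressing $x$ in the basis $\{1,\alpha,\ldots,\alpha^{q-1}\}$ gives the required tuple.

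The main subtlety is the identification of $P$ with the norm form, which requires the degree of $\alpha$ over $N(\beta_w)$ to be $q$. In the ``only if'' direction this is extracted directly from the non-splitting argument of Lemma~\ref{not split} (which is exactly what makes the pole-at-$\Pp$ hypothesis bite); in the ``if'' direction the two cases $\alpha \in N(\beta_w)$ and $\alpha \notin N(\beta_w)$ are dispatched separately, so no additional obstacle arises.
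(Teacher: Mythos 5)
Your proof is correct and is exactly the argument the paper leaves implicit (the paper states Theorem~\ref{thm:polynorm} without proof as an immediate consequence of the unlabeled lemma identifying $P$ with the norm form, Lemma~\ref{not split}, and Lemma~\ref{hassolutios}). The one subtlety worth having made explicit — that the degree $[N(\beta_w,\alpha):N(\beta_w)]$ might be $1$ rather than $q$, so $P$ is not automatically a norm form — you handle correctly in both directions: for the ``only if'' direction by observing that the inert prime $\hat\pp$ produced in Lemma~\ref{not split} forces the degree to be $q$ (a degree-$1$ extension has no inert primes), and for the ``if'' direction by the case split on whether $\alpha\in N(\beta_w)$, using the Vandermonde solvability in the degenerate case.
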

Now combining Theorem \ref{thm:polynorm} with Theorem \ref{bounddegree}
 we have the following result:
\begin{proposition}
The set $\mbox{INT}(K,\pp,t)$ is Diophantine over $K$.
\end{proposition}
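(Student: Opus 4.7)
The plan is to combine Theorem~\ref{thm:polynorm} with the extension-rewriting machinery of Section~\ref{sec:rewrite}. I would fix a prime $\pp$ of $K$ dividing $\Pp$ and introduce the set
$$
D := \{\, w \in K : \exists\, a_0, \ldots, a_{q-1} \in N(\beta_w) \text{ with } P(a_0, \ldots, a_{q-1}) = h_w \,\}.
$$
Theorem~\ref{thm:polynorm} says precisely that $w \in D$ forces $w$ to have no poles at any factor of $\Pp$, and that every $w$ algebraic over $\F_p(t)$ with no poles at any factor of $\Pp$ lies in $D$. Under our standing assumption that $\Pp$ is unramified in $K/C(t)$, for $w \in k_0(t)$ the condition $\ord_\pp w \geq 0$ is equivalent to $w$ having no poles at \emph{any} factor of $\Pp$. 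Hence $D$ satisfies both defining properties of $\mbox{INT}(K,\pp,t)$: membership forces integrality at $\pp$, and every $w \in k_0(t)$ integral at $\pp$ lies in $D$.

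Next I would verify that $D$ itself is Diophantine over $K$. The element $\delta$ lies in the fixed finite extension $N = K(\delta)$; the element $\beta_w$ is algebraic over $K$ of degree at most $pq$ and satisfies, after clearing denominators in its defining polynomial, the $K$-coefficient relation
$$
(w^p t^{p-1} + 1)\,\beta_w^p - (w^p t^{p-1} + 1)\,\beta_w - t^p = 0
$$
when $q = p$, and the analogous relation $(w^q t^{q-1} + 1)(\beta_w^q - 1) = t^q$ when $q \neq p$. The norm form $P(X_0, \ldots, X_{q-1})$ has coefficients in $K$ since it is symmetric in the conjugates $\alpha_j$ of $\alpha$. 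Writing each $a_i$ on a $K$-basis of $N(\beta_w)$ formed from products of powers of $\delta$ and $\beta_w$ converts the norm equation $P(a_0,\ldots,a_{q-1}) = h_w$ into a single polynomial identity over $K$ in the variable $w$, the auxiliary element $\beta_w$, and the $K$-coordinates of the $a_i$. Theorem~\ref{bounddegree} applied with $\beta_w$ as the bounded-degree auxiliary element (and with the fixed extension $N$ absorbed as in Corollary~\ref{fixed}) then yields a Diophantine definition of $D$ over $K$, completing the proof.

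The heart of the argument has already been carried out in Theorem~\ref{thm:polynorm}, which supplies the norm-equation characterization of integrality at the factors of $\Pp$. The remaining step is essentially mechanical bookkeeping: one has to check that clearing denominators in the defining relations of $\beta_w$ produces a polynomial over $K$ whose degree in the auxiliary variable is bounded independently of $w$, and then invoke the standard rewriting tools of Section~\ref{sec:rewrite}. I do not anticipate any new technical obstacle at this stage.
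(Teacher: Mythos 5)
Your proposal is correct and follows the same route as the paper, which dispatches the proposition with a single sentence: ``combining Theorem~\ref{thm:polynorm} with Theorem~\ref{bounddegree}.'' You have filled in exactly the bookkeeping that the paper leaves implicit — the explicit polynomial relations for $\beta_w$ after clearing denominators, the observation that the norm form $P$ has $K$-coefficients because it is symmetric in the conjugates $\alpha_j$, the reduction to $K$-coordinates of the $a_i$, the use of Theorem~\ref{bounddegree} for the bounded-degree auxiliary $\beta_w$ together with Corollary~\ref{fixed} for the fixed extension by $\delta$, and the check that for $w\in k_0(t)\subseteq C(t)$ integrality at one factor of $\Pp$ is equivalent to integrality at all factors.
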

We now add our result on definability of $p$-th powers to the proposition above to conclude that the following assertion is true:
\begin{proposition}
Hilbert's Tenth Problem is unsolvable over $K$.  (As above, if $K$ is uncountable we consider polynomials with coefficients in a finitely generated ring, more specifically the algebraic closure of $\F_p(t)$.)
\end{proposition}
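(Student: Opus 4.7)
The plan is to combine the two existential definability results already proved, namely the definability of $P(K)$ from Section~\ref{sec:special} and the definability of $\mbox{INT}(K,\pp,t)$ just established, and feed them into Proposition~\ref{noalg}. The only work is to arrange that all the technical assumptions made in the construction of $\mbox{INT}(K,\pp,t)$ are satisfied, possibly after passing to a finite extension, and then to descend back to $K$ itself by Corollary~\ref{probabove}.

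First, I would choose the auxiliary data $q$, $a$, and $t$. Since by hypothesis $K$ does not contain the algebraic closure of a finite field, the field $C_0$ (the algebraic closure of $\F_p$ in the constant field $C$) is not algebraically closed. Lemma~\ref{basic} then supplies a rational prime $q$ and an element $a\in C_0$ such that either $X^q-a$ (if $q\neq p$) or $X^p-X-a$ (if $q=p$) is irreducible over $C_0$. In the case $q\neq p$, a primitive $q$-th root of unity is adjoined if necessary; by the last part of Lemma~\ref{basic} the relevant finite extension still has an algebraic closure of $\F_p$ that is not algebraically closed, so the irreducibility of $X^q-a$ can be preserved (after possibly replacing $a$). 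Next, invoking (the argument underlying) Lemma~\ref{heigtoft}, I would pick a non-constant $t\in K$ whose zeros and poles are all simple, again passing to a finite extension of $K$ if needed. Call the resulting finite extension $K^{*}$, and let $\pp$ be any prime of $K^{*}$ above the zero prime of $t$ in $C^{*}(t)$.

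With these choices in place, all the hypotheses of Notation and Assumption~\ref{intnot} are met for $K^{*}$, so the proposition preceding the statement to be proved yields that $\mbox{INT}(K^{*},\pp,t)$ is Diophantine over $K^{*}$. Independently, the main result of Section~\ref{sec:special}, together with the arbitrary-constant-field adjustment in Subsection~\ref{arbit}, gives that $P(K^{*})$ is existentially definable over $K^{*}$. Now Proposition~\ref{noalg} applies to $K^{*}$, $\pp$ and $t$, and produces a finitely generated subfield $K^{*}_0\subseteq K^{*}$ over which Hilbert's Tenth Problem for $K^{*}$ is unsolvable; without loss of generality $K^{*}_0$ contains (indeed may be taken inside) the algebraic closure of $\F_p(t)$ in $K^{*}$.

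Finally, since $K^{*}/K$ is a finite extension and Hilbert's Tenth Problem is unsolvable over $K^{*}$ with coefficients in a finitely generated subfield, Corollary~\ref{probabove} transfers the undecidability back to $K$, with coefficients in some finitely generated subfield $K_0\subseteq K$. Thus Hilbert's Tenth Problem is unsolvable over $K$ with polynomial coefficients taken in $K_0$, as asserted. The main obstacle in executing this plan is purely bookkeeping: one must verify that all the side conditions in Notation and Assumption~\ref{intnot} (simple zeros and poles of $t$, availability of the constants $a$, $b$, $c$, the $q$-th root of unity, etc.) can be realized simultaneously by passing to at most one finite extension of $K$, and that the Diophantine definitions produced at that level really do descend through Corollary~\ref{probabove}; everything substantive has already been done in Sections~\ref{sec:special} and~\ref{sec:integralsubsets}.
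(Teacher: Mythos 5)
Your proof is correct, but it proves more than the paper asks for here, and that difference is worth noting. In the paper, this proposition appears under the standing conditions of Notation and Assumptions~\ref{intnot}, so the field $K$ is \emph{already} assumed to come equipped with an element $t$ having simple zeros and poles, the prime $q$ and the constant $a$ witnessing non-algebraic-closure, the $q$-th root of unity when $q\neq p$, and so on. Given that, the intended proof is a one-step invocation of Proposition~\ref{noalg}: Subsection~\ref{arbit} supplies an existential definition of $P(K)$ for an arbitrary function field of positive characteristic, the immediately preceding proposition supplies the Diophantine definition of $\mbox{INT}(K,\pp,t)$, and $\ord_\pp t>0$ by the choice of $\pp$; Proposition~\ref{noalg} then yields undecidability of Hilbert's Tenth Problem over $K$ directly, with no need to pass to any extension.

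What you did instead --- choosing $q$, $a$, $t$ afresh, enlarging $K$ to a finite extension $K^*$ where the conditions of~\ref{intnot} hold, applying the definability results and Proposition~\ref{noalg} over $K^*$, and finally descending back to $K$ via Corollary~\ref{probabove} --- is precisely the content of the subsequent subsection ``Removing the assumptions on $K$,'' which is what upgrades this proposition to Theorems~\ref{countable} and~\ref{uncountable} for an arbitrary $K$ not containing the algebraic closure of a finite field. So your argument is sound and in fact proves the stronger result, but it re-derives machinery the paper deliberately defers; the proposition as stated is meant to be the cheap, local consequence of the two definability facts, and recognizing the standing assumptions would have made that clear. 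One small caution in your write-up: you invoke ``by hypothesis $K$ does not contain the algebraic closure of a finite field,'' which is a hypothesis of the theorems but not literally of this proposition; what is available here is the stronger assumption in~\ref{intnot} that a specific $T^q-a$ (or $T^p-T+a$) has no root in $K$, which is what the norm-equation argument actually uses.
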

\subsection{Removing the assumptions on $K$.}
Finally, we need to remove the assumptions we imposed on $K$.  We show
that given an arbitrary function field $G$ of positive characteristic
and not containing the algebraic closure of a finite field, we can a
find a finite extension $K$ of $G$ where all the assumptions above are
satisfied.  We proceed in several steps.
\begin{enumerate}
\item Let $M$ be the field obtained by adjoining to $G$ the algebraic closure
  $F$ of the constant field of $G$.  Since the constant field
  of $M$ is perfect, as in the section on $p$-th powers, we can find a
  non-constant element $z$ of $M$ such that $M/F(z)$ is separable,
  implying $z$ is not a $p$-th power in $M$.
\item Let $M_0$ be the algebraic closure in $M$ of $F_0(z)$, where
  $F_0$ is the algebraic closure of $\F_p$.  Observe that $z$ is not a
  $p$-th power in $M_0$ and hence the extension $M_0/F_0(z)$ is also
  separable.
\item Consider now the extensions $M/F(z)$ and $M_0/F_0(z)$.  Let $\gamma
  \in M, \gamma_0 \in M_0$ be such that $M=F(\gamma, z),
  M_0=F_0(\gamma_0,z)$.  Let $\Gamma \subset F$ be a finite set
  containing all the coefficients of the monic irreducible polynomials
  of $\gamma$ and $\gamma_0$ over $F(z)$ and $F_0(z)$, respectively.
\item Since both extensions $M/F(z)$ and $M_0/F_0(z)$ are finite
  and there are only finitely many ramified primes, we can find $c_1,
  c_2 \in F_0$ such that $\displaystyle t=\frac{w-c_1}{w-c_2}$ has
  only simple zeros in both $M$ and $M_0$.  We can also select
  $c_1, c_2$, so that $t$ does not have zeros or poles at the zeros of
  the discriminant of the power basis of $\gamma$ or $\gamma_0$ and
  $\gamma$ and $\gamma_0$ are both integral with respect to the zero
  and pole divisors of $t$.  Observe that $F_0(t)=F_0(z)$ and
  $F(t)=F(z)$.
\item Consider the monic irreducible polynomial of $\gamma$ (or
  $\gamma_0$) over $F(z)$ (or $F_0(z)$) modulo the zero divisor of $t$
  and also modulo the pole divisor of $t$.  Let $\Delta \subset F$ be
  a finite set containing all the roots of the reduced polynomials.
\item Set $K=G(t, \gamma, \gamma_0, \Gamma, \Delta)$ and add, if
  necessary, the primitive $q$-th roots of unity.
\end{enumerate}
As above, let $C$ and $C_0$ be the constant fields of $K_0$ and $K$,
respectively, and consider the extensions $K/C(t)$ and $K_0/C_0(t)$.
By construction, $\gamma$ has the same
monic irreducible polynomial over $C(t)$ and $F(t)$, and $\gamma_0$
has the same monic irreducible polynomial over
$C_0(t)$ and $F_0(t)$.  Since extensions $M/F(t)$ and $M_0/F_0(t)$ are
separable, all the roots of these polynomials are distinct. Hence
the extensions $K/C(t)$ and $K_0/C_0(t)$ are also separable.  Also by
construction, the pole divisor and the zero divisor of $t$ are prime to
the divisor of the discriminant of the power bases of $\gamma$ and
$\gamma_0$ and both $\gamma$ and $\gamma_0$ are integral with respect
to the zero and the pole divisor of $t$.  Consequently, the power
bases of $\gamma$ and $\gamma_0$ are both integral bases with respect
to the primes which are the pole and the zero of $t$ in $F(t)$ and
$F_0(t)$ respectively, and the pole and the zero of $t$ do not ramify
in the either extension.

Further, by \cite[Chapter 1, \S8, Proposition 25]{L}, the
factorization of the monic irreducible polynomials of $\gamma$ and
$\gamma_0$ corresponds to the factorization of the zero and the pole
of $t$ in $K$ and $K_0$, respectively.  However, by construction again,
these polynomials factor completely (into distinct factors) modulo the
zero and modulo the pole divisor of $t$.  So both primes will factor
into (unramified) factors of relative degree 1.  Since the pole
divisor and the zero divisor of $t$ are are also of degree 1, we must
conclude that their factors in $M$ and $M_0$ are also of degree 1.
Finally, we note that $C$ does not contain the algebraic closure of
$\F_p$ as was noted in Lemma \ref{basic}.

Now by Corollary \ref{probabove}, we can conclude that Hilbert's Tenth
Problem is unsolvable over $G$ (with the usual clarification in the
case $G$ is uncountable).  This concludes the proof of Theorems
\ref{countable} and \ref{uncountable}.

\section{First-order undecidability of function fields of positive
  characteristic}
\label{sec:firstorder}
Let $K$ be any function field of positive
characteristic and $t \in K$ an element with simple zeros and poles.
In Theorem 2.9 of \cite{ES} we showed that if $P(K,t)=\{x \in K: \exists s \in \Z_{\geq 0}, x=
t^{p^s}\}$ is first-order definable
over $K$, then $(\Z, |, +)$ has a model over $K$ in a first-order ring
language with finitely many parameters, and thus the first-order
theory of $K$ in a ring language with finitely many parameters is
undecidable.  The transition to the ring language without parameters
is in Section 5 of \cite{ES} and is not dependent on the nature of the
field.  Since we have now defined $P(K,t)$ existentially over any
function field of positive characteristic, the conclusion of Theorem
2.9 applies to any such field and so does the strengthening of the
result to the the first-order language without parameters.

\end{document}